\documentclass[12pt,twoside]{amsart}
\usepackage[margin=3cm]{geometry}
\usepackage[colorlinks=false]{hyperref}
\usepackage[english]{babel}
\usepackage{graphicx}
\usepackage{float}
\usepackage{amsmath,amsfonts,amssymb,amsthm}
\usepackage{lipsum}
\usepackage[T1]{fontenc}
\usepackage{fourier}
\usepackage{color}
\usepackage{esint}
\usepackage{caption}
\usepackage{siunitx}
\usepackage{ccicons}
\usepackage{subcaption}
\makeatletter
\def\blfootnote{\xdef\@thefnmark{}\@footnotetext}
\makeatother

\newcommand\ccnote{
    \blfootnote{The second author is supported by  NSFC No.12301077.}
}
\allowdisplaybreaks
\usepackage[export]{adjustbox}
\numberwithin{equation}{section}
\usepackage{setspace}\setstretch{1.05}
\renewcommand{\le}{\leqslant}
\renewcommand{\leq}{\leqslant}
\renewcommand{\ge}{\geqslant}
\renewcommand{\geq}{\geqslant}
\renewcommand{\mathbb}{\varmathbb}
\usepackage{fancyhdr}
\pagestyle{fancy}
\fancyhf{}

\newtheorem{theorem}{Theorem}[section]
\newtheorem{lemma}[theorem]{Lemma}
\newtheorem{corollary}[theorem]{Corollary}
\newtheorem{proposition}[theorem]{Proposition}
\newtheorem{definition}[theorem]{Definition}
\newtheorem{remark}[theorem]{Remark}

\fancyhead[LE,RO]{\thepage}

\fancyhead[RE]{}
\fancyhead[LO]{}


\usepackage{comment}
\usepackage{enumerate}
\usepackage{hyperref}
\usepackage[alphabetic, msc-links, backrefs]{amsrefs}



\newcommand{\spt}{\operatorname{spt}}

\usepackage{amsthm}

\allowdisplaybreaks

\input epsf
\def\begfig {
\begin{figure}
\small }
\def\endfig {
\normalsize
\end{figure}
}

\begin{document}
\thispagestyle{empty}

\ccnote

\title[]{Quantitative Stability of the Clifford Torus as a Willmore Minimizer}

\author{Yuchen Bi}
\author{Jie Zhou}

\address{Yuchen Bi\\Mathematical Institute\\ Department of Pure Mathematics\\ University of Freiburg\\ Ernst-Zermelo-Stra{\ss}e 1 Freiburg im Breisgau \\ D-79104\\ Germany}
\email{yuchen.bi@math.uni-freiburg.de}
\address{Jie Zhou\\ School of Mathematical Sciences\\ Capital Normal University\\ 105 West Third Ring Road North\\ Haidian District\\ Beijing\\ 100048 P.R. China }
\email{zhoujiemath@cnu.edu.cn}
\maketitle

\noindent \textbf{Abstract.} \textit{For an integral $2$-varifold $V\subset \mathbb{S}^3$ with square-integrable mean curvature, unit density, and support of genus at least $1$, assume that its Willmore energy satisfies
\[
\mathcal{W}(V)\le 2\pi^2+\delta^2,\qquad \delta<\delta_0\ll1.
\]
We show that the support $\Sigma=\operatorname{spt}V$ is, after applying a suitable conformal transformation of $\mathbb{S}^3$, quantitatively close to the Clifford torus. More precisely, under an appropriate conformal normalization, the surface $\Sigma$ admits a $W^{2,2}$ conformal parametrization by the flat torus whose conformal factor and metric coefficients differ from those of the Clifford torus by at most $C\delta$.
}
\vskip0.3cm

\noindent \textbf{Keywords.} Willmore energy, Varifold, Quantitative rigidity, Clifford torus\vspace{0.5cm}

\noindent \textbf{MSC2020.} 49Q20, 35B65


\tableofcontents

\section{Introduction}

For an immersed surface $f:\Sigma\to \mathbb{S}^3$ in the round three-sphere, the Willmore energy is defined by
\[
\mathcal{W}(\Sigma)
=\int_{\Sigma}\!\left(1+\tfrac{1}{4}|H|^2\right)d\mu_f,
\]
where $H$ is the mean curvature and $d\mu_f$ is the area element of $f^{*}g_{\mathbb{S}^3}$.  
A fundamental property of $\mathcal{W}$ is its conformal invariance.  
By the theorem of Marques and Neves resolving the Willmore conjecture~\cite{MN14}, any closed oriented surface $\Sigma\subset\mathbb{S}^3$ with genus at least one satisfies
\[
\mathcal{W}(\Sigma)\ge 2\pi^2,
\]
with equality if and only if $\Sigma$ is, up to a conformal transformation, the Clifford torus
\[
\mathbb{T}^2=\mathbb{S}^1\!\left(\tfrac{1}{\sqrt2}\right)\times
\mathbb{S}^1\!\left(\tfrac{1}{\sqrt2}\right)\subset\mathbb{S}^3.
\]

The purpose of this paper is to study the quantitative rigidity of this theorem.  
Given $\delta<\delta_0\ll1$ and a surface $\Sigma$ with
\[
\mathcal{W}(\Sigma)\le 2\pi^2+\delta^2,
\]
we ask how close $\Sigma$ must be to a conformal image of the Clifford torus, in a sense that simultaneously controls the $W^{2,2}$-parametrization, the $L^\infty$-bound of the conformal factor, and the conformal structure.  
This parallels the classical quantitative rigidity for the genus zero case, where De Lellis-M\"uller~\cite{dLMu,dLMu2} proved optimal linear estimates for nearly umbilical surfaces, later extended to higher codimension by Lamm-Sch\"atzle~\cite{LS-2014} and to varifolds by Bi-Zhou~\cite{BZ25}.

A first step toward our result is to understand the topology of surfaces whose Willmore energy is close to the Clifford value.  
Using existing existence and energy asymptotic results, one obtains that if a surface $\Sigma$ satisfies $g(\Sigma)\ge1$ and $\mathcal{W}(\Sigma)$ is sufficiently close to $2\pi^2$, then in fact $g(\Sigma)=1$.  
This follows directly from the existence theory for Willmore minimizers (Simon for genus 1~\cite{LS93} and Bauer-Kuwert for genus $\ge2$~\cite{BK03}), together with the large-genus limit of Kuwert-Li-Sch\"atzle~\cite{KLS10} and the Willmore conjecture~\cite{MN14}.

Once the genus has been identified, Leon Simon's results~\cite{LS93} imply that any sequence of tori 
\(\Sigma_k \subset \mathbb{S}^3\) with \(\mathcal{W}(\Sigma_k)\to 2\pi^2\) admits, after suitable conformal reparametrization, 
a subsequence converging as varifolds to a limit surface whose genus is at least one.  
By lower semicontinuity of the Willmore energy, this limit has Willmore energy at most \(2\pi^2\).  
In view of the Willmore conjecture, the only possibility is that the limit is itself a torus of Willmore energy \(2\pi^2\).
It is well known to experts (see for instance Kuwert-Li~\cite{KL-2012},  Sch\"atzle~\cite{Sch13} and Rivi\'ere\cite{R14}) that, after parametrizing by a normalized flat torus metric, such a sequence is then qualitatively close to the Clifford torus. Section~2 contains a detailed proof of this qualitative convergence. We in fact formulate and prove a slightly more general statement: the model
surface may be any fixed genus~$\ge1$ minimal surface (not only the Clifford
torus), provided that no energy concentration occurs---in the precise sense
that the mean curvature has small $L^{2}$-norm and the area ratio is close to~$1$.
Our approach proceeds by first establishing convergence of the surfaces
\emph{as subsets of $\mathbb{S}^{3}$} under these assumptions, and only
afterwards upgrading the convergence to that of the associated conformal
parametrizations.  This is somewhat different in spirit from earlier
treatments, which work directly with compactness of $W^{2,2}$ conformal
immersions into Euclidean space.

The main difficulty is to promote this qualitative convergence to a quantitative, in particular linear, stability estimate.

A natural idea would be to expand the Willmore functional around the Clifford torus: one considers the second variation together with the higher-order remainder, analyzes the spectral properties of the resulting symmetric quadratic form, and then derives quantitative stability from the coercivity of this linearized operator.  However, in order for this approach to be effective, the remainder must be genuinely dominated by the quadratic term, which requires the surface to be $C^1$ or at least Lipschitz close to the Clifford torus.  Establishing such a priori closeness is delicate in our weak $W^{2,2}$-conformal setting.

For this reason, following the strategy of Marques-Neves, we avoid a direct perturbative analysis of the Willmore functional.  Instead, using the Heintze-Karcher-type inequality to relate the Willmore energy to the area of the canonical family $\Sigma_{(v,t)}$, we reformulate the problem in Section~3 as a minimal-surface stability problem.

To describe our approach, we recall the canonical 5-parameter family of Marques-Neves~\cite{MN14}.  
For $v\in\mathring{B}^4$, let
\[
F_v(x)=(1-|v|^2)\frac{x-v}{|x-v|^2}-v
\]
be the corresponding conformal transformation.  
If $\mathbb{S}^3\setminus\Sigma=A\cup A^*$ is the decomposition into connected components, we denote $\Sigma_v:=F_v(\Sigma)$ and $\mathbb{S}^3\setminus\Sigma_v=A_v\cup A_v^*$ with $A_v=F_v(A)$.  
Let $d_v$ be the signed distance to $\Sigma_v$, and define
\[
\Sigma_{(v,t)}=\partial\{x\mid d_v(x)<t\}, \qquad (v,t)\in\mathring{B}^4\times[-\pi,\pi].
\]
The Heintze-Karcher-type inequality~\cite{MN14,HK78,R99} gives
\[
\mathrm{Area}(\Sigma_{(v,t)})\le \mathcal{W}(\Sigma),
\]
with equality only at $t=0$ when $\Sigma_v$ is minimal.  

Marques and Neves further established the following fundamental result, often referred to as the \emph{$2\pi^2$-Theorem}:

\begin{theorem}[$2\pi^2$-Theorem, Marques-Neves~\cite{MN14}]\label{2pi2-theorem}
Let $\Sigma\subset\mathbb{S}^3$ be a smooth embedded surface of genus at least $1$.  
Then its canonical five-parameter family 
\[
\{\Sigma_{(v,t)}\}_{(v,t)\in \mathring{B}^4\times[-\pi,\pi]}
\]
satisfies
\[
\max_{(v,t)\in \mathring{B}^4\times[-\pi,\pi]}
\mathrm{Area}\bigl(\Sigma_{(v,t)}\bigr)\;\ge\; 2\pi^2.
\]
Moreover, equality holds if and only if $\Sigma$ is the Clifford torus.
\end{theorem}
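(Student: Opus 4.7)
The plan is to apply Almgren-Pitts min-max theory to the canonical five-parameter family, regarded as a sweepout of $\mathbb{S}^{3}$ by mod-$2$ integral $2$-cycles. The argument has three stages: extending the family to the closed parameter cube and identifying its boundary behavior; showing that the resulting sweepout detects the top mod-$2$ cohomology of the cycle space; and coupling Almgren-Pitts existence with a low-index minimal surface classification to pin down the sharp constant $2\pi^{2}$.

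First I would extend $(v,t)\mapsto \Sigma_{(v,t)}$ continuously in the flat topology from $\mathring{B}^{4}\times[-\pi,\pi]$ to the closed cube $\bar{B}^{4}\times[-\pi,\pi]$. The nontrivial boundary behavior occurs as $v\to \partial B^{4}=\mathbb{S}^{3}$, where $F_{v}$ collapses $\mathbb{S}^{3}\setminus\{v\}$ to the antipodal point $-v$: thus $\Sigma_{v}$ concentrates at $-v$ and $\Sigma_{(v,t)}$ converges to the geodesic sphere of radius $|t|$ centered at $-v$. Together with the collapse at $t=\pm\pi$, the restriction of the extended family to $\partial(\bar{B}^{4}\times[-\pi,\pi])$, after quotienting the degenerate fibers, realizes the standard sweepout of $\mathbb{S}^{3}$ by round spheres. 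Two technical points require care here: continuity in the flat metric through the collapsing locus, and upper semicontinuity of the area that propagates the Heintze-Karcher bound $\mathrm{Area}(\Sigma_{(v,t)})\leq \mathcal{W}(\Sigma)$ to the closure.

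The central topological step is to show that the resulting map $\Phi$ into the space $\mathcal{Z}_{2}(\mathbb{S}^{3};\mathbb{Z}_{2})$ of mod-$2$ flat $2$-cycles is a $5$-sweepout in the Almgren-Pitts sense, i.e.\ $\Phi^{*}(\bar\lambda)^{5}\neq 0$, where $\bar\lambda$ is the generator of $H^{1}(\mathcal{Z}_{2}(\mathbb{S}^{3};\mathbb{Z}_{2});\mathbb{Z}_{2})$ coming from Almgren's identification $\mathcal{Z}_{2}(\mathbb{S}^{3};\mathbb{Z}_{2})\simeq K(\mathbb{Z}_{2},1)$. This reduces to a parity computation for the boundary sweepout by geodesic spheres. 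The genus hypothesis $g(\Sigma)\geq 1$ enters crucially: because $\Sigma$ separates $\mathbb{S}^{3}$, the associated regions $A_{(v,t)}$ form a coherent mod-$2$ family, and a linking/degree calculation shows that the boundary sweepout represents the standard generator. I expect this $5$-sweepout property to be the main obstacle, since it must be verified uniformly for every embedded surface of positive genus, with no geometric input beyond the separation property, and it requires choosing the extension at the boundary in a way compatible with the degree computation.

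Once the $5$-sweepout property is established, the Almgren-Pitts min-max theorem together with Schoen-Simon regularity in dimension three yields a closed embedded minimal surface $\Gamma\subset\mathbb{S}^{3}$ with $\mathrm{Area}(\Gamma)\leq \max_{(v,t)}\mathrm{Area}(\Sigma_{(v,t)})$ and Morse index at most $5$. By Urbano's theorem such a $\Gamma$ is either a totally geodesic $2$-sphere (area $4\pi$) or a Clifford torus (area $2\pi^{2}$). A catenoid-type / pull-tight estimate excludes the sphere alternative: otherwise the sweepout could be homotoped through cycles of smaller mass, contradicting the $5$-sweepout property just proved. This gives the asserted lower bound $\max_{(v,t)}\mathrm{Area}(\Sigma_{(v,t)})\geq 2\pi^{2}$. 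For the equality statement, the Clifford torus must appear as some $\Sigma_{(v_{0},t_{0})}$; combined with the explicit form of $F_{v}$ and the constraint that every nearby member of the canonical family has area at most $2\pi^{2}$, the strong maximum principle for minimal surfaces forces $t_{0}=0$ and back-propagates the identification to $\Sigma$ itself up to the conformal parameter $v_0$.
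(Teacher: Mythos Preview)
The paper does not prove this theorem at all: Theorem~\ref{2pi2-theorem} is quoted from \cite{MN14} as a black box and is invoked in Section~3 (e.g.\ in the proof of Theorem~\ref{almost minimizing}) without any argument supplied. So there is no ``paper's own proof'' to compare your proposal against.

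Your outline is broadly the Marques--Neves strategy, but a few points are inaccurate in ways that matter. First, the boundary extension is more delicate than you indicate: as $v\to p\in\partial B^{4}$ the limit of $\Sigma_{(v,t)}$ depends on whether $p\in\Sigma$. When $p\notin\Sigma$ one indeed gets geodesic spheres centered at $-p$, but when $p\in\Sigma$ the limit is a great sphere determined by the tangent plane $T_{p}\Sigma$, and this discontinuity must be resolved by a further reparametrization near $\Sigma\times\{0\}$. Second, and relatedly, the genus hypothesis does not enter merely via separation: the key computation is that the map from the (reparametrized) boundary of the parameter cube to the space of oriented great spheres has topological degree equal to $g(\Sigma)$, and it is precisely $g(\Sigma)\geq 1$ that makes the boundary map homotopically nontrivial and forces the $5$-sweepout property. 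Your phrasing ``linking/degree calculation shows that the boundary sweepout represents the standard generator'' glosses over this. Third, the exclusion of the geodesic sphere as the min-max surface is not a catenoid estimate but a Lusternik--Schnirelmann-type argument: if the width equalled $4\pi$ one could deform the family below $4\pi$, contradicting the nontrivial degree of the boundary map. Finally, the rigidity case is not handled by a maximum principle for minimal surfaces; rather, equality in the min-max forces the canonical family to contain the Clifford torus as some $\Sigma_{(v_{0},t_{0})}$, and equality in the Heintze--Karcher inequality then forces $t_{0}=0$ and $\Sigma_{v_{0}}$ minimal, hence equal to the Clifford torus.
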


With the five-parameter family $\{\Sigma_{(v,t)}\}$ of
Marques-Neves recalled above, we now indicate how it enters our proof.

The key point is to show that if a genus--one surface $\Sigma$ satisfies
$\mathcal{W}(\Sigma)\le 2\pi^2+\delta^2$, then a \emph{conformal}
transformation of $\mathbb{S}^3$ yields a surface $\Sigma'$ with
\[
\mathrm{Area}(\Sigma')\;\ge\;2\pi^2 - C\delta^2.
\]

The $2\pi^2$ theorem guarantees that the full $(v,t)$-family contains a slice
whose area is at least $2\pi^2$.  Since the variables $v$ encode conformal
transformations and $t$ is the signed distance parameter, our aim is to
realize such a high-area slice \emph{within the conformal}
four-parameter subfamily.  
For this, it is necessary to control the distance parameter $t$ in terms of
the energy gap~$\delta$, ensuring that the maximizing slice must lie close to
the $t=0$ slice.

A closer examination of the Heintze-Karcher inequality used by
Marques-Neves shows that, \emph{once the five-parameter family is built
from the quantitatively normalized surface obtained in Section~2}, the
problem reduces to proving that the maximizing parameter $(v_0,t_0)$ stays a
definite distance away from the boundary of the conformal group.  
Equivalently, there exists a universal constant $\eta_1>0$ such that
\[
|v_0|\;\le\;1-\eta_1 .
\]
This reduction is carried out in Section~3, where the technical
proof of Proposition~\ref{away from boundary of conformal group} is deferred to the Appendix.

To clarify why such a uniform bound is true, we now outline the geometric
intuition behind it.  When the conformal parameter $v$ approaches the boundary of
$\mathring{B}^4$, the transformed surface $\Sigma_{(v,t)}$ becomes
quantitatively close to a geodesic sphere, which forces its
area to stay strictly below $2\pi^2$.  
Thus such $v$ cannot maximize the canonical family.

More precisely, when $t=0$, the qualitative stability implies that $\Sigma_v= \Sigma_{(v,0)}$ is Hausdorff close to a geodesic sphere $S$, and the region enclosed by $\Sigma_v$ is similarly close to that enclosed by~$S$.  
A direct computation then shows that
\[
\mathrm{Area}(\Sigma_v)
\]
is close to $\mathrm{Area}(S)$, which is at most $4\pi$. Since $\Sigma_{(v,t)}$ is obtained from $\Sigma_v$ by shifting along its normal directions, the area difference satisfies an estimate of the form
\[
\bigl|\mathrm{Area}(\Sigma_{(v,t)})-\mathrm{Area}(\Sigma_v)\bigr|
    \;\lesssim\; |t|\,\mathcal{W}(\Sigma_v).
\]
Hence, for sufficiently small $|t|$, the area remains below~$5\pi$.

For larger $|t|$, we distinguish two cases.  
If $\Sigma_v$ stays a definite distance from the poles, we may apply the
Euclidean model: a region trapped between two nearly parallel half–spaces of
separation $\varepsilon\ll1$ has the property that, for any fixed
$t_0\sim1$, the boundaries of its $t$-parallel sets ($t\ge t_0$) become Lipschitz graphs over one
bounding plane with Lipschitz constant $\to0$ as $\varepsilon/t_0\to0$.  
Transferring this to the spherical setting yields that $\Sigma_{(v,t)}$ is a
small Lipschitz graph over a geodesic sphere and hence has area $<5\pi$.

If instead $\Sigma_v$ lies very close to a pole, its diameter is already
tiny, and a simple comparison–geometry argument shows that
$\Sigma_{(v,t)}$ remains of area $<5\pi$.

Thus in all cases $\mathrm{Area}(\Sigma_{(v,t)})<5\pi$. Since $5\pi < 2\pi^2$, no parameter $(v,t)$ with $|v|$ sufficiently close to $1$ can maximize the canonical area.  
This yields the desired estimate
\[
|v_0| \le 1 - \eta_1.
\]

With this uniform control of the conformal parameter, the remaining task is to
derive a quantitative stability estimate for minimal surfaces.  
Assembling the arguments developed so far leads to our main quantitative
rigidity theorem:

\begin{theorem}\label{Thm: main}
Let $V=\underline{v}(\Sigma,1)$ be an integral $2$-varifold in $\mathbb{S}^3$
with unit density and mean curvature $H\in L^2(d\mu)$, and assume its support
$\Sigma$ has genus at least $1$.  
If the Willmore energy
\[
\mathcal{W}(V)=\int_\Sigma\Bigl(1+\tfrac14 |H|^2\Bigr)\,d\mu
\]
satisfies
\[
\mathcal{W}(V)\le 2\pi^2+\delta^2,
\qquad \delta>0 \text{ sufficiently small},
\]
then, after a suitable conformal transformation of $\mathbb{S}^3$, there exists
a homeomorphism
\[
f:\mathbb{S}^1\times\mathbb{S}^1 \longrightarrow \Sigma \subset\mathbb{S}^3
\]
such that in the standard coordinates $(\theta,\varphi)$ on
$\mathbb{S}^1\times\mathbb{S}^1$ the pull-back metric takes the form
\[
f^{*}g_{\mathbb{S}^3}
= e^{2u}\,(a\, d\theta^2 + 2 b\, d\theta\, d\varphi + c\, d\varphi^2).
\]

Let $f_0:\mathbb{S}^1\times\mathbb{S}^1\to\mathbb{S}^3$ denote the Clifford torus
embedding
\[
f_0(\theta,\varphi)
= \frac{1}{\sqrt{2}}(\cos\theta,\;\sin\theta,\;\cos\varphi,\;\sin\varphi),
\]
and write its image as $\mathbb{T}^2:=f_0(\mathbb{S}^1\times\mathbb{S}^1)$.  
Then
\[
f_0^{*} g_{\mathbb{S}^3}
= \tfrac12 (d\theta^2+d\varphi^2).
\]

The following quantitative rigidity estimates hold:
\begin{enumerate}

\item[\rm(1)]
\[
\| f - f_0 \|_{W^{2,2}(\mathbb{S}^1\times\mathbb{S}^1)} \le C\,\delta.
\]

\item[\rm(2)]
\[
\|u\|_{L^\infty(\mathbb{S}^1\times\mathbb{S}^1)} \le C\,\delta.
\]

\item[\rm(3)]
\[
|a-\tfrac12| + |b| + |c-\tfrac12| \le C\,\delta.
\]

\end{enumerate}
Here $C$ is a universal constant independent of $\Sigma$.
\end{theorem}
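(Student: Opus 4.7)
My plan is to feed the output of Sections~2 and~3 into a quantitative Jacobi analysis on the Clifford torus. First, by the reduction of Section~3 together with the uniform interior control $|v_0|\le 1-\eta_1$, I obtain a conformal transformation $F_{v_0}$ for which $\Sigma':=F_{v_0}(\Sigma)$ satisfies
\[
\mathcal{W}(\Sigma')\le 2\pi^{2}+\delta^{2}
\qquad\text{and}\qquad
\mathrm{Area}(\Sigma')\ge 2\pi^{2}-C\delta^{2}.
\]
Since $\mathcal{W}=\mathrm{Area}+\tfrac14\!\int|H|^{2}\,d\mu$, this forces $\|H\|_{L^{2}(\Sigma')}^{2}\le C\delta^{2}$, and Section~2 provides a further ambient isometry after which $\Sigma'$ is varifold-close to the Clifford torus $\mathbb{T}^{2}$ with error tending to zero as $\delta\to 0$.

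\textbf{From varifold closeness to a $W^{2,2}$ normal graph.} Allard regularity, applied with the $L^{2}$-small mean curvature and $C^{0}$-proximity to the smooth reference surface $\mathbb{T}^{2}$, realizes $\Sigma'$ as a normal graph
\[
\Sigma'=\{\exp_{p}(\phi(p)\,\nu_{0}(p)):p\in\mathbb{T}^{2}\},\qquad \|\phi\|_{C^{1,\alpha}(\mathbb{T}^{2})}=o(1)\text{ as }\delta\to 0.
\]
The mean-curvature expansion reads $H=L\phi+Q(\phi,\nabla\phi,\nabla^{2}\phi)$, where $L$ is the Jacobi operator of $\mathbb{T}^{2}$ in $\mathbb{S}^{3}$ and $\|Q\|_{L^{2}}\le C\|\phi\|_{C^{1}}\|\phi\|_{W^{2,2}}$. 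Combined with $\|H\|_{L^{2}}\le C\delta$, Schauder/Fredholm theory then yields
\[
\|(I-\pi_{K})\phi\|_{W^{2,2}}\le C\delta,
\]
where $K=\ker L$ and $\pi_{K}$ is the $L^{2}$-projection onto $K$.

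\textbf{Killing the kernel.} I expect this step to be the main obstacle. On $\mathbb{T}^{2}$, $K$ is four-dimensional and spanned by $\{\cos\theta\cos\varphi,\cos\theta\sin\varphi,\sin\theta\cos\varphi,\sin\theta\sin\varphi\}$; these are precisely the infinitesimal restrictions to $\mathbb{T}^{2}$ of the Marques-Neves $v$-action on normal graphs. The first-order maximality condition $\nabla_{v}\mathrm{Area}(\Sigma_{(v,t)})|_{(v_{0},t_{0})}=0$ provides exactly four linear scalar constraints. My plan is to expand $\mathrm{Area}(\Sigma_{(v,t)})$ to second order in $(v,t)$ around the Clifford configuration, verify non-degeneracy of the resulting Hessian in the $v$-directions, and combine this with $\mathrm{Area}(\Sigma')\ge 2\pi^{2}-C\delta^{2}$ to conclude $\|\pi_{K}\phi\|_{L^{2}}\le C\delta$. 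Together with the previous display this yields $\|\phi\|_{W^{2,2}(\mathbb{T}^{2})}\le C\delta$.

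\textbf{Conformal reparametrization.} The map $F(\theta,\varphi)=\exp_{f_{0}(\theta,\varphi)}(\phi\,\nu_{0})$ is then a $W^{2,2}$ immersion within $C\delta$ of $f_{0}$, whose pull-back metric is $C\delta$-close to $\tfrac12(d\theta^{2}+d\varphi^{2})$ in $L^{\infty}\cap W^{1,2}$. A quantitative isothermal-coordinate construction for $W^{2,2}$ immersions (in the spirit of M\"uller-\v{S}ver\'ak and Rivi\`ere) produces a biholomorphism $\Psi$ of $\mathbb{S}^{1}\times\mathbb{S}^{1}$ such that $f:=F\circ\Psi$ is conformal, with both $\Psi-\mathrm{id}$ and the conformal modulus of $f^{*}g_{\mathbb{S}^3}$ controlled linearly in $\delta$. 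Writing $f^{*}g_{\mathbb{S}^3}=e^{2u}(a\,d\theta^{2}+2b\,d\theta\,d\varphi+c\,d\varphi^{2})$ and reading off the coefficients gives assertions~(2) and~(3), while composing the estimates of the previous two steps delivers~(1).
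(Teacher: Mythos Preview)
Your main gap is in the ``killing the kernel'' step, and it is not a matter of missing details but an incorrect identification. The Jacobi operator of the Clifford torus in $\mathbb{S}^3$ is $J=-\Delta_{g_0}-|A|^2-\mathrm{Ric}(\nu,\nu)=-2\Delta_{\mathrm{flat}}-4$ (with $\Delta_{\mathrm{flat}}$ the Laplacian of $d\theta^2+d\varphi^2$), so $\ker J$ is the $(-\Delta_{\mathrm{flat}})$-eigenspace for $\lambda=2$, i.e.\ $\mathrm{span}\{\cos\theta\cos\varphi,\dots\}$. These are \emph{not} the normal components of the $F_v$-generators. A direct computation (differentiating $F_v$ at $v=0$) gives the conformal Killing field $X_v(x)=-2v+2\langle x,v\rangle x$, whose normal component on $\mathbb{T}^2$ is $-2\langle v,n\rangle\in\mathrm{span}\{\cos\theta,\sin\theta,\cos\varphi,\sin\varphi\}$: this is the $\lambda=1$ eigenspace, which is an \emph{index} (negative) eigenspace of $J$, not the kernel. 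The four kernel functions are instead the normal components of the ambient rotations $A_{13},A_{14},A_{23},A_{24}\in\mathfrak{so}(4)$. Hence the criticality condition $\nabla_v\mathrm{Area}(\Sigma_{(v,t)})|_{(v_0,t_0)}=0$ gives you four constraints in the wrong eigenspace and says nothing about $\pi_K\phi$. Your fallback via ``non-degeneracy of the Hessian in the $v$-directions'' would also fail even if the identification were correct: a variation lying in $\ker J$ has vanishing second variation of area, so the Hessian would be zero there.

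The paper handles this differently, and in a way that sidesteps the issue entirely. Rather than working with a normal graph, it keeps the conformal parametrization $f=f_0+h$ produced by Section~2 and exploits the residual $SO(4)$-freedom: after composing with the rotation $R_0$ minimizing $\int|Rf-f_0|^2$, the first-order conditions $\int\langle Af,f_0\rangle=0$ for $A\in\mathfrak{so}(4)$ give six balance identities. Two of them ($A_{12},A_{34}$) yield $\int v^1=\int v^2=0$; the remaining four ($A_{13},\dots,A_{24}$), combined with the Cauchy--Riemann relations coming from conformality of $f$, show directly that the $L^2$-projection of $z=\langle h,n\rangle$ onto $\ker(\Delta+2)$ is $O(\|H\|_{L^1}+\|h\|_{W^{1,2}}^2)$. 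No area-criticality or Hessian argument is used at this stage. If you want to salvage your normal-graph approach, the natural fix is to use exactly this rotational gauge fixing (which you are allowed, since the statement permits any conformal transformation) rather than the $v$-criticality; your final reparametrization step then becomes unnecessary if you carry the conformal parametrization from Section~2 throughout.
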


Recent work of Rupp-Scharrer~\cite{RS25}, making use of the regularity theorem
developed in~\cite{BZ-2022b}, establishes density of smooth surfaces among
integral $2$-varifolds with square-integrable second fundamental form and
Willmore energy below $8\pi$.  
Since unit density together with $H\in L^2$ already yields square-integrable
second fundamental form by~\cite{BZ-2022b}, their result is closely related to
the class of varifolds considered here.

In the appendix we prove a density statement suited to our setting:
compactly supported integral $2$-varifolds with unit density and
square-integrable mean curvature can be approximated in $W^{2,2}$ by smooth
embedded surfaces.  
Our construction provides global bilipschitz control between the limiting
parametrization $F$ and its smoothings $F_\eta$, which ensures
embeddedness of the approximating surfaces without requiring any additional
smallness assumptions on the Willmore energy.

\medskip
\noindent\textbf{Outline of the paper.}
In Section~2, we establish the topological stability and qualitative geometric stability for surfaces with Willmore energy sufficiently close to $2\pi^2$.  
In particular, we show that such a surface must have genus one and is, after suitable conformal normalization, qualitatively close to the Clifford torus.

In Section~3, we analyze the Marques-Neves canonical family and prove the key estimate that under the assumption $\mathcal{W}(\Sigma)\le 2\pi^2+\delta^2$, one can find a conformal transformation of $\mathbb{S}^3$ such that the transformed surface $\Sigma'$ satisfies
\[
\mathrm{Area}(\Sigma')\ge 2\pi^2 - C\delta^2,
\]
for a universal constant $C>0$.  
This reduces the Willmore stability problem to a quantitative stability statement for minimal surfaces.

Section~4 contains the linearization and stability analysis.  
Using the estimate from Section~3, we derive the quantitative bounds on the conformal factor, the conformal structure, and the $W^{2,2}$-distance to the Clifford torus, thereby proving Theorem~\ref{Thm: main}.

Finally, in the Appendix we provide the proof of Proposition~\ref{away from boundary of conformal group} and we also establish the density and parametrization results for compactly
supported integral $2$-varifolds with unit density and $L^2$-mean curvature,
showing in particular that such varifolds admit global conformal
parametrizations and can be approximated by smooth embedded surfaces.

\section{Qualitative Preliminaries for Quantitative Rigidity}

In this section we develop the qualitative ingredients needed for the quantitative rigidity theorem.  
We begin with a topological stability statement: any surface in $\mathbb{S}^3$ whose Willmore energy is sufficiently close to $2\pi^2$ and whose genus is at least one must in fact be an embedded torus.

\begin{lemma}\label{topological rigidity} 
Assume $\delta<\delta_0\ll 1$ and $\Sigma$ is a closed immersed surface in 
$\mathbb{S}^3$ with genus at least one and 
$\mathcal{W}(\Sigma)\le 2\pi^2+\delta^2$. 
Then $\Sigma$ is an embedded torus.  
\end{lemma}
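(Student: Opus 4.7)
The plan is to first establish embeddedness by a Li--Yau argument, and then to rule out genus $\ge 2$ by combining the existence of genus-$g$ Willmore minimizers with the large-genus asymptotics of the Willmore infimum. No delicate geometric analysis is needed; the lemma follows from a logical combination of the theorems already cited in the introduction.

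For embeddedness, I would invoke the Li--Yau inequality, transported to $\mathbb{S}^3$ via stereographic projection and the conformal invariance of $\mathcal{W}$: at any point $p \in \mathbb{S}^3$ of immersion multiplicity $k$ one has $\mathcal{W}(\Sigma) \ge 4\pi k$. If $\Sigma$ is not embedded, some point has multiplicity $\ge 2$, forcing $\mathcal{W}(\Sigma) \ge 8\pi$. Since $8\pi > 2\pi^2$ (because $4 > \pi$), choosing $\delta_0$ with $2\pi^2 + \delta_0^2 < 8\pi$ contradicts the hypothesis. Hence $\Sigma$ is embedded.

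To exclude higher genus, let
\[
\beta_g \;:=\; \inf\bigl\{\mathcal{W}(\Sigma') : \Sigma' \subset \mathbb{S}^3 \text{ closed embedded of genus } g\bigr\}.
\]
By Marques--Neves \cite{MN14}, $\beta_1 = 2\pi^2$, attained up to conformal transformation only by the Clifford torus. By Simon \cite{LS93} for $g=1$ and Bauer--Kuwert \cite{BK03} for $g \ge 2$, each $\beta_g$ is attained by a smooth embedded surface; since such a minimizer for $g \ge 2$ cannot be conformally equivalent to $\mathbb{T}^2$, the Marques--Neves rigidity gives $\beta_g > 2\pi^2$ strictly. Combined with the Kuwert--Li--Sch\"atzle asymptotic $\beta_g \to 8\pi$ as $g \to \infty$ \cite{KLS10}, one obtains a uniform gap: there exists $g_0$ with $\beta_g \ge 2\pi^2 + 1$ for all $g \ge g_0$, while for the finitely many remaining $g \in \{2,\dots,g_0-1\}$ strict positivity gives a positive minimum. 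Setting $\eta := \inf_{g \ge 2}(\beta_g - 2\pi^2) > 0$ and shrinking $\delta_0$ so that $\delta_0^2 < \eta$, the hypothesis $\mathcal{W}(\Sigma) \le 2\pi^2 + \delta^2 < 2\pi^2 + \eta \le \beta_g$ forces $g(\Sigma) = 1$. Together with the embeddedness from the previous step this proves the lemma.

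The only nontrivial point is the passage from the pointwise strict inequality $\beta_g > 2\pi^2$ to a uniform gap valid for every $g \ge 2$ simultaneously; this is where the three inputs (Marques--Neves rigidity, Bauer--Kuwert existence, Kuwert--Li--Sch\"atzle asymptotics) must be combined, separating the finite-$g$ regime (qualitative strict inequality) from the large-$g$ regime (quantitative lower bound from the asymptotic). Once this single constant $\eta$ is extracted, the argument is immediate.
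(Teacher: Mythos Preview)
Your proof is correct and follows essentially the same strategy as the paper: the paper establishes embeddedness via the monotonicity formula in $\mathbb{R}^4$ (which is the computation underlying the Li--Yau inequality you cite), and then rules out genus $\ge 2$ by the identical combination of Bauer--Kuwert existence, Marques--Neves rigidity, and the Kuwert--Li--Sch\"atzle large-genus limit to produce a uniform gap $\inf_{g\ge 2}(\beta_g-2\pi^2)>0$. The only cosmetic difference is that the paper writes out the density bound $\Theta(\Sigma,p)\le \mathcal{W}(\Sigma)/4\pi<2$ explicitly rather than quoting Li--Yau by name.
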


\begin{proof}
Regarding $\Sigma\subset\mathbb{S}^3\subset\mathbb{R}^4$ as an immersed 
surface in $\mathbb{R}^4$, its Willmore energy equals 
\begin{align*}
\mathcal{W}(\Sigma)
= \frac{1}{4}\int_{\Sigma}|\vec H|^2\,d\mu,
\end{align*}
where $\vec H$ is the mean curvature of $\Sigma\subset\mathbb{R}^4$.  
By the monotonicity formula, for any $p\in f(\Sigma)$,
\begin{align*}
\Theta(\Sigma,p)
= \lim_{r\to0}\frac{\mu(f^{-1}(B_r(p)))}{\pi r^2}
\le \frac{1}{16\pi}\int_{\Sigma}|\vec H|^2\,d\mu
\le \frac{2\pi^2+\delta^2}{4\pi}
<2.
\end{align*}
Since $\Sigma$ is immersed, $\Theta(\Sigma,p)$ is an integer and hence equals 
one; in particular $f:\Sigma\to\mathbb{S}^3$ is an embedding, and $\Sigma$ is 
oriented.

By the existence result of \cite{BK03} and the large-genus limit 
\cite{KLS10}, for each $g\ge1$ there exists a smooth embedded closed surface 
$\Sigma_g\subset\mathbb{S}^3$ with genus $g$ and 
\begin{align*}
\beta_g^3
:= \inf\{\,\mathcal{W}(\Sigma)\mid 
\Sigma\subset\mathbb{S}^3 \text{ with genus } g \,\}
= \mathcal{W}(\Sigma_g),
\end{align*}
such that $\lim_{g\to\infty}\beta_g^3=8\pi>2\pi^2$.
Therefore, there exists $N\ge1$ such that for any $g\ge N$, 
$\beta_g^3>2\pi^2+\delta_0^2$.  
The rigidity part of the Marques-Neves theorem shows 
$\beta_g^3>2\pi^2$ for every $g>1$.  
Now choose $\delta_0>0$ such that 
\[
\delta_0^2<\delta_1^2
:=\min_{2\le g\le N}(\beta_g^3-2\pi^2).
\]
Then for any $\delta<\delta_0$ and any 
$\Sigma\subset\mathbb{S}^3$ with $\mathcal{W}(\Sigma)\le2\pi^2+\delta^2$, we 
have $\mathcal{W}(\Sigma)<\beta_g^3$ for all $g\ge2$.  
Hence $\mathrm{genus}(\Sigma)=1$, and therefore $\Sigma$ is an embedded torus 
in $\mathbb{S}^3$.
\end{proof}

With the topological rigidity , we are ready to establish the following local estimate.

\begin{proposition}\label{basic setting}
Assume $\Sigma_i\subset\mathbb{S}^3$ is a sequence of embedded tori with
\[
\mathcal{W}(\Sigma_i)= 2\pi^2+\delta_i^2
\qquad\text{and}\qquad
\lim_{i\to\infty}\delta_i=0.
\]
Then there exist conformal transformations $F_i:\mathbb{S}^3\to\mathbb{S}^3$ such that the following holds.

For any fixed $\gamma>0$, there exist $r_0=r_0(\gamma)>0$ and $I_0=I_0(\gamma)\in\mathbb{N}$ such that for any $i\ge I_0$, any $x\in F_i(\Sigma_i)$ and any $r<r_0$, there exist a conformal parametrization
\[
\varphi :D_{r}\to F_i(\Sigma_i)
\]
and a $2$-dimensional affine plane $T_{x,r}$ passing through $x$ such that
\begin{align*}
F_i(\Sigma_i)\cap B_r(x)\subset \varphi(D_r)\subset F_i(\Sigma_i)\cap B_{(1+\gamma)r}(x),
\end{align*}
\begin{align*}
|\varphi(\tau)-\tau|\le \gamma r \quad \text{for all }\tau\in D_r,
\end{align*}
and
\begin{align*}
(1-\gamma)|\tau_1-\tau_2|\le |\varphi(\tau_1)-\varphi(\tau_2)|\le (1+\gamma)|\tau_1-\tau_2|
\quad \text{for all }\tau_1,\tau_2\in D_r.
\end{align*}
\end{proposition}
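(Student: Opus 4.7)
My plan is to combine a Marques-Neves-style conformal normalization with the $\varepsilon$-regularity theory for Willmore surfaces and the construction of isothermal coordinates on near-flat graphs. The proof proceeds in three steps: (i) choose the $F_i$ so that $\tilde{\Sigma}_i := F_i(\Sigma_i)$ does not concentrate mean-curvature energy at any point; (ii) identify the varifold limit of $\tilde{\Sigma}_i$ as the Clifford torus and deduce local $C^{1,\alpha}$-graphicality; (iii) promote graphicality to conformal parametrizations close to the identity.

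For (i), I would exploit the noncompactness of the conformal group of $\mathbb{S}^3$ to disperse any concentration of $\int|H|^2$. Whenever a geodesic ball $B_{\rho_i}(x_i)$ with $\rho_i\to 0$ carries more than a fixed threshold $\varepsilon_0$ of the Willmore energy, I compose with a M\"obius transformation sending $x_i$ to a fixed reference point and rescaling so that the concentration occurs at unit scale. Since the total Willmore energy is bounded by $2\pi^2+1$, this iterative extraction terminates after a controlled number of steps, and the resulting $F_i$ realize a no-concentration bound
\[
\sup_{x\in\mathbb{S}^3}\int_{\tilde{\Sigma}_i\cap B_{\rho_0}(x)} |H|^2\,d\mu < \varepsilon_0
\]
for all large $i$, with universal $\rho_0,\varepsilon_0>0$. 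Passing to a subsequence, Allard's compactness yields an integer-density limit varifold $V_\infty$; lower semicontinuity of $\mathcal{W}$ and the no-concentration bound imply that its support is smooth of genus at least one with $\mathcal{W}(V_\infty)\le 2\pi^2$, so Marques-Neves forces $V_\infty$ to be a conformal image of $\mathbb{T}^2$, which may be absorbed into $F_i$.

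For (ii) and (iii), the no-concentration bound together with unit density (Lemma~\ref{topological rigidity}) places each piece $B_r(x)\cap\tilde{\Sigma}_i$ with $x\in\tilde{\Sigma}_i$ and $r\le r_1$ in the hypotheses of the $\varepsilon$-regularity theorem of Simon and Kuwert-Sch\"atzle, so each such piece is a single-sheeted $C^{1,\alpha}$ graph of a function $u_i$ over its affine tangent plane $T_{x,r}$. The graph norm $\|Du_i\|_{C^0}$ can be made arbitrarily small by taking $i$ large enough (depending on $\gamma$), using the varifold convergence $\tilde{\Sigma}_i\to\mathbb{T}^2$ and the standard regularity bootstrap. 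Writing the induced metric as $\delta_{ij}+\partial_i u_i\,\partial_j u_i$, it is a small perturbation of the Euclidean metric; solving the associated Beltrami equation produces a conformal diffeomorphism $\varphi:D_r\to\tilde{\Sigma}_i$ that differs from the inclusion $D_r\hookrightarrow T_{x,r}\subset\mathbb{R}^4$ by at most $\gamma r$ in $C^0$ and has bilipschitz constant in $[1-\gamma,1+\gamma]$. The inclusions
\[
\tilde{\Sigma}_i\cap B_r(x)\subset\varphi(D_r)\subset\tilde{\Sigma}_i\cap B_{(1+\gamma)r}(x)
\]
then follow from choosing the domain of $u_i$ slightly larger than the disk of radius $r$ in $T_{x,r}$ and invoking the near-isometry of $\varphi$.

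I expect the main obstacle to lie in Step (i), where one must simultaneously perform the conformal normalization and rule out the formation of a spherical bubble in the limit. Lemma~\ref{topological rigidity} is essential here: because a spherical bubble would carry at least $4\pi$ of Willmore energy, its extraction would leave a residual surface of genus at least one with $\mathcal{W}<2\pi^2$, contradicting Marques-Neves. Once the normalization is secured, Steps (ii) and (iii) follow from well-developed regularity theory for Willmore immersions.
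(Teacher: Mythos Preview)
Your overall strategy---conformally normalize so that no mean-curvature energy concentrates, identify the varifold limit as the Clifford torus via Marques-Neves, then apply local regularity---matches the paper's. The paper's argument is short because it outsources both nontrivial steps: Simon~\cite{LS93} supplies the conformal normalization and the fact that the limit is a smooth surface of genus at least one, and then, after observing that varifold convergence $\tilde\Sigma_i\to\mathbb{T}^2$ together with $\mathcal{W}(\tilde\Sigma_i)\to2\pi^2$ forces $\int_{\tilde\Sigma_i}|H_i|^2\to0$, the authors' own critical-Allard regularity theorem~\cite[Thm.~1.1]{BZ-2022b} delivers the bilipschitz conformal parametrization directly.

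There is a genuine gap in your Step~(ii). The Kuwert-Sch\"atzle $\varepsilon$-regularity theorem requires $\int_{B_r}|A|^2<\varepsilon_0$, whereas your Step~(i) only establishes non-concentration of $\int|H|^2$. With $H$ merely in $L^2$ and density ratio close to~$1$ you are in the \emph{critical} regime for Allard regularity, and $C^{1,\alpha}$ graphicality---in particular your claim that $\|Du_i\|_{C^0}$ can be made arbitrarily small---does not follow from the classical theorems you invoke. This is precisely the difficulty that~\cite{BZ-2022b} is designed to overcome: it produces the bilipschitz conformal parametrization directly from the hypotheses $\mu(B_r)/\pi r^2\le1+\varepsilon$ and $\int_{B_r}|H|^2\le\varepsilon$, without ever passing through a $C^{1,\alpha}$ graph. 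Your gap is fillable---once varifold convergence to $\mathbb{T}^2$ is known, the Gauss equation and Gauss-Bonnet give $\int_{\tilde\Sigma_i}|A_i|^2\to4\pi^2=\int_{\mathbb{T}^2}|A|^2$, and lower semicontinuity then rules out local concentration of $|A|^2$---but you would need to supply this argument before Kuwert-Sch\"atzle can be applied. Likewise, in Step~(i) the assertion that the varifold limit is \emph{smooth of genus at least one} is not a consequence of bubble-counting alone; it is essentially the content of Simon's graphical decomposition and energy-quantization argument, which you correctly flag as the main obstacle but do not actually carry out.
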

\begin{proof}
By Lemma~\ref{topological rigidity}, each $\Sigma_i$ (and hence each $\widetilde{\Sigma}_i$) is an embedded torus for $i$ sufficiently large.  
Having established the topological type, we may then apply Simon's work \cite{LS93}: after composing with suitable conformal transformations $F_i$, the surfaces
\[
\widetilde{\Sigma}_i := F_i(\Sigma_i)
\]
converge in the Hausdorff sense and as varifolds to a smooth surface $\Sigma$ whose genus is at least $1$, and moreover
\[
\mathcal{W}(\Sigma)\le \liminf_{i\to\infty}\mathcal{W}(\widetilde{\Sigma}_i)=2\pi^2.
\]
By the rigidity part of the Willmore conjecture \cite{MN14}, the limit $\Sigma$ must be the Clifford torus.  
After composing with a fixed conformal automorphism of $\mathbb{S}^3$ if necessary, we may assume that
\[
\Sigma=\mathbb{T}^2.
\]

Since
\[
\mathcal{W}(\widetilde{\Sigma}_i)
   = \int_{\widetilde{\Sigma}_i}\Bigl(1+\tfrac14 |H_i|^2\Bigr)\, d\mathcal{H}^2
   = 2\pi^2+\delta_i^2,
\]
and $\mathcal{H}^2(\widetilde{\Sigma}_i)\to\mathcal{H}^2(\mathbb{T}^2)=2\pi^2$ by varifold convergence, it follows that
\[
\int_{\widetilde{\Sigma}_i}|H_i|^2\, d\mathcal{H}^2 \longrightarrow 0.
\]

Thus $\widetilde{\Sigma}_i\to\mathbb{T}^2$ in Hausdorff distance, with $L^2$-mean curvature tending to zero.  
Applying the regularity theorem of Bi-Zhou \cite[Thm.~1.1]{BZ-2022b}, the proposition follows.

\end{proof}

  In this paper we will use the above local estimate repeatedly.  For this purpose we introduce the following notion.
\begin{definition}
Let $\gamma>0$ and $r_0>0$.  
For a surface $\Sigma\subset\mathbb{R}^n$ and a point $x\in\Sigma$,  
we say that $x$ is a \emph{$(\gamma,r_0)$-regular point} if for every $r<r_0$  
there exist an affine $2$-plane $T_{x,r}\subset\mathbb{R}^n$ passing through $x$  
and a conformal parametrization
\[
\varphi:=\varphi_{x,r}:D_r(x):=T_{x,r}\cap B_r(x)\longrightarrow\Sigma
\]
such that $\varphi(x)=x$, 
\begin{equation}\label{eq:containment}
\Sigma\cap B_r(x)\subset\varphi(D_r(x))\subset\Sigma\cap B_{(1+\gamma)r}(x),
\end{equation}
\begin{equation}\label{eq:graph-closeness}
|\varphi(\tau)-\tau|\le\gamma r,\qquad\forall\,\tau\in D_r(x),
\end{equation}
and the bi-Lipschitz bound
\begin{equation}\label{eq:bilip}
(1-\gamma)|\tau_1-\tau_2|\le|\varphi(\tau_1)-\varphi(\tau_2)|
\le(1+\gamma)|\tau_1-\tau_2|,\quad\forall\,\tau_1,\tau_2\in D_r(x).
\end{equation}
Such a map $\varphi$ is called a \emph{$(\gamma,r)$-regular conformal parametrization}.

Note that the domain $D_r(x)$ lies in the affine plane $T_{x,r}$.  
Fixing an isometry $\sigma:D_r\to D_r(x)$ with $\sigma(0)=x$,  
we may identify $D_r(x)$ with the standard Euclidean disk $D_r\subset\mathbb{R}^2$.  
The composition
\[
\tilde{\varphi}=\varphi\circ\sigma:D_r\longrightarrow\Sigma
\]
is again called a \emph{$(\gamma,r)$-regular conformal parametrization}.  
It satisfies $\tilde{\varphi}(0)=x$, the inclusions in \eqref{eq:containment},  
and
\[
(1-\gamma)|\tau_1-\tau_2|\le|\tilde{\varphi}(\tau_1)-\tilde{\varphi}(\tau_2)|
\le(1+\gamma)|\tau_1-\tau_2|,\quad\forall\,\tau_1,\tau_2\in D_r.
\]
We say that the surface $\Sigma$ is \emph{$(\gamma,r_0)$-regular} if every $x\in\Sigma$ is $(\gamma,r_0)$-regular.
\end{definition}

Having introduced the notion of $(\gamma,r_0)$-regularity, we proceed to prove the following local convergence lemma.

\begin{lemma}[Sequential local convergence]\label{local estimate}
Let $\gamma,r_0,V\in(0,\infty)$ be fixed.  
Let $\Sigma_0\subset\mathbb{S}^n$ be a minimal surface and 
$\Sigma_i\subset\mathbb{S}^n\subset\mathbb{R}^{n+1}$  
a sequence of $(\gamma,r_0)$-regular surfaces satisfying  
\[
\mathcal{H}^2(\Sigma_i)\le V,\qquad 
d_{\mathcal H}(\Sigma_i,\Sigma_0)+\|H_i\|_{L^2(\Sigma_i)}\to0 .
\]
For any $x_i\in\Sigma_i$ with $x_i\to x\in\Sigma_0$, let  
\[
\tilde\varphi_i:D_{r_0}\to\Sigma_i,\qquad 
\tilde\varphi_i(0)=x_i,
\]
be $(\gamma,r_0)$-regular conformal parametrizations.  
Then, after passing to a subsequence, there exists a $(\gamma,r_0)$-regular conformal parametrization  
\[
\tilde\varphi:D_{r_0}\to\Sigma_0,\qquad \tilde\varphi(0)=x,
\]
such that for every $\sigma<r_0$,
\[
\|\tilde\varphi_i-\tilde\varphi\|_{W^{2,2}(D_\sigma)}
+\|u_i-u\|_{L^\infty(D_\sigma)}\longrightarrow 0 ,
\]
where $\tilde\varphi_i^*g_{\mathbb{R}^{n+1}}=e^{2u_i}g_{\mathbb R^2}$ and  
$\tilde\varphi^*g_{\mathbb{R}^{n+1}}=e^{2u}g_{\mathbb R^2}$.
\end{lemma}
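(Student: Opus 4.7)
My plan is to extract the limiting parametrization in three stages of increasing regularity: a uniform limit via Arzel\`a--Ascoli, strong $W^{2,2}_{\mathrm{loc}}$ convergence via elliptic compactness applied to the conformal-immersion equation, and finally $L^\infty$ convergence of the conformal factor through a Liouville-equation bootstrap.

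First, the bi-Lipschitz hypothesis \eqref{eq:bilip} makes $\{\tilde\varphi_i\}$ equi-Lipschitz on $D_{r_0}$, and $\tilde\varphi_i(0)=x_i\to x$ pins the sequence. Arzel\`a--Ascoli produces a subsequence (not relabeled) converging uniformly to a continuous $\tilde\varphi:D_{r_0}\to\mathbb{R}^{n+1}$ inheriting the bi-Lipschitz bound; Hausdorff convergence of supports forces $\tilde\varphi(D_{r_0})\subset\Sigma_0$; and after subsequencing the reference planes $T_{x_i,r}$ in the Grassmannian to some $T_{x,r}$, the inclusions \eqref{eq:containment} and the graph-closeness \eqref{eq:graph-closeness} also pass to the limit, identifying $\tilde\varphi$ as a $(\gamma,r_0)$-regular conformal parametrization of $\Sigma_0$ at $x$.

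Second, I exploit conformality: since each $\Sigma_i\subset\mathbb{S}^n\subset\mathbb{R}^{n+1}$, every $\tilde\varphi_i$ satisfies
\begin{equation*}
\Delta\tilde\varphi_i\;=\;e^{2u_i}\bigl(\vec H_i-2\tilde\varphi_i\bigr),
\end{equation*}
where the $-2\tilde\varphi_i$ term encodes the umbilicity of $\mathbb{S}^n$ traced over $T\Sigma_i$. The bi-Lipschitz bound pins $e^{2u_i}=\tfrac12|\nabla\tilde\varphi_i|^2$ to a fixed subinterval of $(0,\infty)$, and the change-of-variables formula gives $\|\vec H_i\|_{L^2(D_{r_0})}\le C\|\vec H_i\|_{L^2(\Sigma_i)}\to 0$. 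Hence $\Delta\tilde\varphi_i$ is bounded in $L^2(D_{r_0})$, giving uniform $W^{2,2}_{\mathrm{loc}}$ bounds; Rellich then yields strong $W^{1,p}_{\mathrm{loc}}$ convergence for every $p<\infty$, whence $e^{2u_i}\to e^{2u}$ strongly in every $L^q_{\mathrm{loc}}$. Consequently the right-hand side above converges in $L^2_{\mathrm{loc}}$ to $-2e^{2u}\tilde\varphi$, and interior estimates for $\Delta$ upgrade the weak $W^{2,2}_{\mathrm{loc}}$ convergence to strong. The limit satisfies $\Delta\tilde\varphi=-2e^{2u}\tilde\varphi$, i.e.\ the minimal-surface equation in $\mathbb{S}^n$, consistent with $\Sigma_0$ being minimal.

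Finally, for $L^\infty$ convergence of $u_i$: smoothness of $\Sigma_0$ and the conformal-harmonic nature of $\tilde\varphi$ make $u$ smooth, and the bi-Lipschitz bound already gives uniform $L^\infty$ control of $w_i:=u_i-u$. Subtracting the Liouville equations $-\Delta u_i=K_ie^{2u_i}$ and $-\Delta u=K_0e^{2u}$ yields an elliptic equation for $w_i$ whose right-hand side can be decomposed into a difference of Gauss curvatures times bounded conformal factors, plus a term vanishing in $L^p_{\mathrm{loc}}$ from Step~2. Using the Gauss equation $K_i=1+K_i^{\mathrm{ext}}$ together with the $L^2_{\mathrm{loc}}$ control on $A_i$ that the $(\gamma,r_0)$-regularity hypothesis supplies via the Bi-Zhou regularity theorem \cite{BZ-2022b}, the right-hand side tends to zero in $L^p_{\mathrm{loc}}$ for some $p>1$; combined with the $L^p$ convergence of $w_i$ inherited from Step~2, Moser iteration gives $\|w_i\|_{L^\infty(D_\sigma)}\to 0$ for every $\sigma<r_0$. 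I expect this last step to be the main obstacle, because in two dimensions $W^{2,2}$ does not embed in $W^{1,\infty}$, so $L^\infty$ control of the conformal factor genuinely requires input beyond the conformal-immersion equation and is precisely where the $(\gamma,r_0)$-regularity assumption (rather than mere small mean curvature) does the critical work.
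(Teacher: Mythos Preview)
Your first two steps are essentially identical to the paper's argument, and they are correct: the bi-Lipschitz bound plus the equation $\Delta\tilde\varphi_i=e^{2u_i}(H_i-2\tilde\varphi_i)$ give uniform $W^{2,2}_{\mathrm{loc}}$ bounds, Rellich gives strong $W^{1,p}_{\mathrm{loc}}$ convergence, the right-hand side then converges in $L^2_{\mathrm{loc}}$, and interior elliptic estimates upgrade to strong $W^{2,2}_{\mathrm{loc}}$ convergence.

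The gap is in your third step. You assert that the right-hand side of the Liouville equation for $w_i=u_i-u$ tends to zero in $L^p_{\mathrm{loc}}$ for some $p>1$, and then invoke Moser iteration. But the Gauss curvature term $K_i e^{2u_i}$ is quadratic in the second fundamental form, and the $(\gamma,r_0)$-regularity together with \cite{BZ-2022b} only gives $A_i\in L^2$, hence $K_i^{\mathrm{ext}}e^{2u_i}\in L^1$---nothing more. Strong $W^{2,2}$ convergence of $\tilde\varphi_i$ yields $|A_i|^2\to|A_0|^2$ in $L^1_{\mathrm{loc}}$ only, and there is no mechanism here producing $L^p$ control for $p>1$. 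In two dimensions $p=1$ is exactly the critical exponent for $-\Delta$, so Moser iteration gives no $L^\infty$ conclusion; this is precisely the failure of $W^{2,2}\hookrightarrow W^{1,\infty}$ that you yourself flag.

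The paper circumvents this by exploiting the \emph{structure} of the right-hand side rather than its size. Writing $e_j=e^{-u}\partial_j\tilde\varphi$ and $e_j^i=e^{-u_i}\partial_j\tilde\varphi_i$, the Liouville equation reads $-\Delta u_i=\ast(de_1^i\wedge de_2^i)$, so
\[
-\Delta(u_i-u)=\ast\bigl[d(e_1^i-e_1)\wedge de_2^i+de_1\wedge d(e_2^i-e_2)\bigr].
\]
This Jacobian/div-curl form is exactly what Wente's inequality handles: it yields
\[
\|u_i-u\|_{L^\infty(D_\sigma)}\le C\,\|e_j^i-e_j\|_{W^{1,2}}\,\|e_k^i\|_{W^{1,2}}
\le C\,\|\tilde\varphi_i-\tilde\varphi\|_{W^{2,2}(D_{\sigma'})},
\]
the last inequality using the smoothness of the limit $\tilde\varphi$ (so $\|D^2\tilde\varphi\|_{L^\infty(D_\sigma)}<\infty$) to control $\|e_j^i-e_j\|_{W^{1,2}}$. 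Thus $L^\infty$ convergence of the conformal factor is obtained directly from the already-established strong $W^{2,2}$ convergence, with no need for any $L^p$, $p>1$, bound on the curvature.
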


\begin{proof}
Since each $\tilde\varphi_i$ is $(\gamma,r_0)$-regular,  
\[
1-\gamma\le e^{u_i}\le 1+\gamma,\qquad
\Delta\tilde\varphi_i=e^{2u_i}(H_i-2\tilde\varphi_i).
\]
As $\|H_i\|_{L^2(\Sigma_i)}\to0$, we obtain  
$\|e^{u_i}H_i\|_{L^2(D_{r_0})}\to0$.  
Uniform $(\gamma,r_0)$-regularity yields uniform $W^{2,2}$–bounds; hence,
\[
\tilde\varphi_i\rightharpoonup\tilde\varphi \text{ in }W^{2,2}(D_{r_0}),\qquad 
\tilde\varphi_i\to\tilde\varphi \text{ in }C^0_{\mathrm{loc}}(D_{r_0}).
\]
The Hausdorff convergence implies $\tilde\varphi(D_{r_0})\subset\Sigma_0$,  
and the bilipschitz bounds pass to the limit, so $\tilde\varphi$ is  
$(\gamma,r_0)$-regular and conformal.

Since $\Sigma_0$ is minimal in $\mathbb{S}^n$, the limit map satisfies  
\[
\Delta\tilde\varphi=-|D\tilde\varphi|^2\,\tilde\varphi.
\]
Interior elliptic regularity implies  
\begin{equation}\label{eq:Hess-bound-smooth}
\|D^2\tilde\varphi\|_{L^\infty(D_\sigma)}\le C(\sigma)
\quad\forall\,\sigma<r_0.
\end{equation}

To compare conformal factors, introduce  
\[
e_j=e^{-u}\partial_j\tilde\varphi,\qquad 
e_j^i=e^{-u_i}\partial_j\tilde\varphi_i.
\]
These satisfy
\[
\Delta u=-\ast(de_1\wedge de_2),\qquad 
\Delta u_i=-\ast(de_1^i\wedge de_2^i),
\]
hence
\[
\Delta(u-u_i)
=-\ast\!\left[d(e_1-e_1^i)\wedge de_2+de_1^i\wedge d(e_2-e_2^i)\right].
\]
Using $1-\gamma\le e^u,e^{u_i}\le1+\gamma$ and \eqref{eq:Hess-bound-smooth},
\[
\|e_j-e_j^i\|_{W^{1,2}(D_\sigma)}
\le C\|\tilde\varphi_i-\tilde\varphi\|_{W^{2,2}(D_\sigma)}.
\]
Thus by Wente's inequality\cite{W69},
\[
\|u_i-u\|_{L^\infty(D_\sigma)}
\le C\|\tilde\varphi_i-\tilde\varphi\|_{W^{2,2}(D_\sigma)}.
\]

Subtracting the equations for $\tilde\varphi_i$ and $\tilde\varphi$,  
using $\|H_i\|_{L^2(\Sigma_i)}\to0$ and elliptic estimates, we obtain  
\[
\|\tilde\varphi_i-\tilde\varphi\|_{W^{2,2}(D_\sigma)}\to0.
\]
The convergence of $u_i$ follows.  
\end{proof}

\begin{remark}
The assumption that $\Sigma_0$ is minimal in $\mathbb{S}^n$ is not essential
for Lemma~\ref{local estimate}, nor is the choice of the ambient sphere.  
The argument works in any fixed ambient manifold $M$, provided there is no
concentration of mean curvature along the sequence.

For instance, it suffices to assume that there exists a smooth vector field
$X$ on $M$ whose restriction to $\Sigma_0$ equals its mean curvature vector,
and that the surfaces $\Sigma_i$ satisfy
\[
\|H_i - X\|_{L^2(\Sigma_i)}\to0,
\]
together with uniform $(\gamma,r_0)$-regularity.  
Under these hypotheses all steps remain valid---the $W^{2,2}$ bounds, the
control of the conformal factors via Wente's inequality, and the strong
$W^{2,2}$ convergence $\tilde\varphi_i\to\tilde\varphi$.
The minimal case $X\equiv0$ is stated only because it is the one needed in our
applications (notably when $\Sigma_0$ is the Clifford torus).
\end{remark}

This local convergence result has two immediate consequences.  
The first is that the nearest-point projection from $\Sigma_i$ to $\Sigma_0$ has degree~$1$.

\begin{proposition}\label{prop:degree}
Under the assumptions of Lemma~\ref{local estimate},  
the nearest-point projection $\pi_i:\Sigma_i\to\Sigma_0$ has degree $1$  
for all sufficiently large $i$.
\end{proposition}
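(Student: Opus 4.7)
The approach is to interpret $\deg(\pi_i)$ as the normalized pull-back integral $\int_{\Sigma_i}\pi_i^*\omega_0\,/\,\mathrm{Area}(\Sigma_0)$, where $\omega_0$ denotes the volume form of $\Sigma_0$, and to show via Lemma~\ref{local estimate} that this integer quantity converges to $1$ as $i\to\infty$.

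Smoothness of the closed minimal surface $\Sigma_0\subset\mathbb{S}^n$ provides a tubular neighborhood $N_\epsilon(\Sigma_0)$ on which the nearest-point projection $\pi:N_\epsilon(\Sigma_0)\to\Sigma_0$ is a smooth retraction with $\pi|_{\Sigma_0}=\mathrm{Id}$. The Hausdorff convergence $d_{\mathcal H}(\Sigma_i,\Sigma_0)\to0$ gives $\Sigma_i\subset N_\epsilon(\Sigma_0)$ for $i$ large, so $\pi_i=\pi|_{\Sigma_i}$ is a well-defined Lipschitz map between closed oriented surfaces. We orient each $\Sigma_i$ so that $\int_{\Sigma_i}\pi_i^*\omega_0\ge 0$, which is possible since $\Sigma_i$ is orientable as a closed embedded surface.

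To evaluate $\int_{\Sigma_i}\pi_i^*\omega_0$, cover $\Sigma_0$ by finitely many images $\tilde\varphi_\alpha(D_{\sigma'})$ of conformal parametrizations $\tilde\varphi_\alpha:D_{r_0}\to\Sigma_0$, $\alpha=1,\dots,N$, with $\sigma'<\sigma<r_0$, and let $\{\chi_\alpha\}$ be a partition of unity on $\Sigma_0$ subordinate to this cover. Apply Lemma~\ref{local estimate} at each base point $\tilde\varphi_\alpha(0)$ to obtain conformal parametrizations $\tilde\varphi_{i,\alpha}:D_{r_0}\to\Sigma_i$ based at points converging to $\tilde\varphi_\alpha(0)$, with $\tilde\varphi_{i,\alpha}\to\tilde\varphi_\alpha$ in $W^{2,2}(D_\sigma)$. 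Smoothness of $\pi$ and the identity $\pi\circ\tilde\varphi_\alpha=\tilde\varphi_\alpha$ yield $\pi\circ\tilde\varphi_{i,\alpha}\to\tilde\varphi_\alpha$ in $W^{2,2}(D_\sigma)$, so $(\pi\circ\tilde\varphi_{i,\alpha})^*\omega_0\to\tilde\varphi_\alpha^*\omega_0$ in $L^1(D_\sigma)$. Assuming $\pi_i^{-1}(\mathrm{spt}\,\chi_\alpha)\subset\tilde\varphi_{i,\alpha}(D_\sigma)$ for $i$ large, one obtains
\[
\int_{\Sigma_i}\pi_i^*\omega_0=\sum_\alpha\int_{D_\sigma}(\pi\circ\tilde\varphi_{i,\alpha})^*(\chi_\alpha\omega_0)\longrightarrow\sum_\alpha\int_{D_\sigma}\tilde\varphi_\alpha^*(\chi_\alpha\omega_0)=\mathrm{Area}(\Sigma_0),
\]
whence $\deg(\pi_i)\to1$ and thus $\deg(\pi_i)=1$ for $i$ large.

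The chief technical point, and the main obstacle to the above, is the containment $\pi_i^{-1}(\mathrm{spt}\,\chi_\alpha)\subset\tilde\varphi_{i,\alpha}(D_\sigma)$ required in the first equality of the display. This is arranged by choosing $\sigma'\ll\sigma$ in the initial cover and invoking the uniform bi-Lipschitz control from $(\gamma,r_0)$-regularity together with $d_{\mathcal H}(\Sigma_i,\Sigma_0)\to0$: these facts place the $\pi_i$-preimage of a small neighborhood in $\Sigma_0$ inside the image of the chart $\tilde\varphi_{i,\alpha}$ built at the corresponding base point on $\Sigma_i$, once $i$ is sufficiently large.
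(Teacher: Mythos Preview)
Your proof is correct and takes a genuinely different route from the paper.  The paper works entirely in a \emph{single} local chart: it writes $\pi_i\circ\tilde\varphi_i=\tilde\varphi\circ g_i$ with $g_i\to\mathrm{Id}$ in $C^0$, then builds an explicit homotopy (via a cutoff interpolation between $g_i$ and the identity) to show that $g_i$ has local degree~$1$ at the origin; since the $(\gamma,r_0)$-regularity forces all $\pi_i$-preimages of the base point to lie in that one chart, this local degree equals the global degree of~$\pi_i$.  Your approach is global and analytical: you use the integral degree formula and a finite cover with partition of unity, reducing the problem to the $L^1$-convergence of pulled-back area forms under the $W^{2,2}$-convergence supplied by Lemma~\ref{local estimate}.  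The ``chief technical point'' you flag --- that $\pi_i^{-1}(\operatorname{spt}\chi_\alpha)\subset\tilde\varphi_{i,\alpha}(D_\sigma)$ --- is exactly the same containment the paper uses implicitly to pass from local to global degree, and your justification via the bi-Lipschitz bounds and Hausdorff convergence is sound.  What each approach buys: the paper's argument avoids any global bookkeeping and the integral formula, at the cost of an explicit homotopy construction; your argument avoids the homotopy but requires assembling finitely many charts and checking $L^1$-convergence of Jacobians (which follows since $W^{2,2}\hookrightarrow W^{1,p}$ for all $p<\infty$ in dimension~$2$).  One minor point worth tightening: Lemma~\ref{local estimate} produces the limit chart $\tilde\varphi_\alpha$ only \emph{after} passing to a subsequence, so you should either build the cover from the limits so obtained, or note that since degree is integer-valued it suffices that every subsequence has a further subsequence along which the argument runs.
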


\begin{proof}
Since $d_{\mathcal H}(\Sigma_i,\Sigma_0)\to0$, the projection $\pi_i$ is well defined for $i$ large.  
Fix $x_i\to x$ and choose the parametrizations $\tilde\varphi_i,\tilde\varphi$  
from Lemma~\ref{local estimate}.  
For $r_0$ small,
\[
\tilde\varphi(D_{r_0/3})
\subset \pi_i\!\left(\tilde\varphi_i(D_{r_0/2})\right)
\subset \tilde\varphi(D_{2r_0/3}).
\]
Thus on $D_{r_0/2}$ we may write
\[
\pi_i\circ\tilde\varphi_i=\tilde\varphi\circ g_i,
\]
where $g_i\to\mathrm{Id}$ in $C^0$.  
In particular,
\[
g_i(D_{r_0/2})\supset D_{r_0/3},\qquad 
g_i^{-1}(D_{r_0/4})\subset D_{r_0/3}.
\]

Select a partition of unity $\{\eta_1,\eta_2\}$ subordinate to  
$\{D_{r_0/2}\setminus D_{r_0/4},\,D_{r_0/3}\}$ and define the homotopy  
\[
H_{i,t}(z)=\eta_1(z)\,g_i(z)+\eta_2(z)\,\bigl(z+t(g_i(z)-z)\bigr),\qquad t\in[0,1].
\]
For $i$ large, $0\notin H_{i,t}(\partial D_{r_0/2})$ for all $t$,  
and all preimages of $0$ lie in $D_{r_0/4}$, where  
$H_{i,t}(z)=z+t(g_i(z)-z)$.  
Since $0$ is a regular value of $H_{i,0}$ and $H_{i,0}^{-1}(0)=\{0\}$,  
\[
\deg(g_i)=\deg(H_{i,1})=\deg(H_{i,0})=1.
\]
Because $\tilde\varphi_i$ and $\tilde\varphi$ are orientation-preserving embeddings,  
\[
\deg(\pi_i)=\deg(g_i)=1.
\]
\end{proof}


A second immediate consequence of Lemma~\ref{local estimate} concerns the convergence of conformal maps whose targets are the surfaces $\Sigma_i$.

\begin{lemma}[Convergence of conformal maps]\label{lem:conformal-convergence}
Let $\gamma,r_0,V$ be as above.  
Let $\Sigma_i, \Sigma_0$ be $(\gamma,r_0)$-regular surfaces with  
\[
d_{\mathcal H}(\Sigma_i,\Sigma_0)+\|H_i\|_{L^2(\Sigma_i)}\to0 .
\]
For any $x_i\to x\in\Sigma_0$, let
\[
f_i:D_{r_0/2}\to\Sigma_i,\qquad f_i(0)=x_i,
\]
be conformal maps satisfying
\[
f_i^*g_{\mathbb R^{n+1}}=e^{2u_i}g_{\mathbb R^2},\qquad 
|\nabla f_i|\le1,\qquad |\nabla f_i(0)|=1.
\]
Then, after passing to a subsequence, there exists a nonconstant conformal map
\[
f:D_{r_0/2}\to\Sigma_0,\qquad f(0)=x,
\]
such that
\[
f_i\to f \ \text{in }W^{2,2}(D_{r_0/4}),\qquad 
u_i\to u\ \text{in }L^\infty(D_{r_0/4}),
\]
with $f^*g_{\mathbb R^{n+1}}=e^{2u}g_{\mathbb R^2}$.
\end{lemma}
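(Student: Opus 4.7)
The plan is first to extract a uniform limit using the Lipschitz bound, and then to upgrade the convergence by reducing, via the local parametrizations supplied by Lemma~\ref{local estimate}, to compactness of holomorphic maps between planar domains. Since $|\nabla f_i|\le 1$, each $f_i$ is uniformly Lipschitz, so Arzel\`a-Ascoli (using boundedness of $\Sigma_i\subset\mathbb{S}^n$) yields, along a subsequence, uniform convergence $f_i\to f$ on $\overline{D_{r_0/2}}$ to some Lipschitz map $f$. Hausdorff convergence $d_{\mathcal H}(\Sigma_i,\Sigma_0)\to 0$ forces $f(\overline{D_{r_0/2}})\subset\Sigma_0$, and $f(0)=x$.

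Fix $y\in\overline{D_{r_0/4}}$ and set $p_i:=f_i(y)\to p:=f(y)\in\Sigma_0$. Applying Lemma~\ref{local estimate} at the base points $p_i,p$ produces $(\gamma,r_0)$-regular conformal parametrizations $\tilde\varphi_i,\tilde\varphi:D_{r_0}\to\Sigma_i,\Sigma_0$ with $\tilde\varphi_i\to\tilde\varphi$ in $W^{2,2}_{\mathrm{loc}}$ and conformal factors $\tilde u_i\to\tilde u$ in $L^\infty_{\mathrm{loc}}$. The uniform Lipschitz control on $f_i$ together with the containment property of $(\gamma,r_0)$-regularity guarantees that for $\rho>0$ sufficiently small, $f_i(D_\rho(y))\subset\tilde\varphi_i(D_{r_0})$, so the transition
\[
g_i:=\tilde\varphi_i^{-1}\circ f_i:D_\rho(y)\longrightarrow D_{r_0}
\]
is a well-defined conformal map between planar domains and, after fixing orientations once and for all, is holomorphic. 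Uniform $C^0$-bounds on the $g_i$ (from the bilipschitz bounds on $\tilde\varphi_i^{-1}$) and Montel's theorem give $g_i\to g$ in $C^\infty_{\mathrm{loc}}$ with $g$ holomorphic. The factorization $f_i=\tilde\varphi_i\circ g_i$ combined with the $W^{2,2}_{\mathrm{loc}}$-convergence of $\tilde\varphi_i$ and the smooth convergence of $g_i$ then yields $f_i\to f=\tilde\varphi\circ g$ in $W^{2,2}_{\mathrm{loc}}(D_\rho(y))$, while the conformal-factor identity
\[
u_i=\tilde u_i\circ g_i+\log|g_i'|
\]
combined with $\tilde u_i\to\tilde u$ in $L^\infty$ and $g_i'\to g'$ smoothly yields $u_i\to u$ in $L^\infty_{\mathrm{loc}}(D_\rho(y))$.

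Covering $\overline{D_{r_0/4}}$ by finitely many such discs and performing a diagonal extraction promotes the local convergences to global ones on $D_{r_0/4}$. For the nonconstancy of $f$, evaluating the identity $u_i=\tilde u_i\circ g_i+\log|g_i'|$ at $y=0$, the normalization $|\nabla f_i(0)|=1$ pins $u_i(0)$ to a fixed value, while the uniform bound $\tilde u_i\in[\log(1-\gamma),\log(1+\gamma)]$ confines $|g_i'(0)|$ to a compact subinterval of $(0,\infty)$, so $g'(0)\ne 0$ and $f$ is nonconstant. The main technical obstacle is the coherence of the covering step: each base point produces its own $\tilde\varphi_i$ and its own local $W^{2,2}$-limit, and one must verify that these local limits glue consistently. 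This is ensured by the global $C^0$-limit fixed in the first step combined with uniqueness of weak $W^{2,2}$-limits, and it is precisely the reason the conclusion is stated on the smaller disc $D_{r_0/4}$, leaving room for the local discs $D_\rho(y)$ to fit inside the domains of the $\tilde\varphi_i$.
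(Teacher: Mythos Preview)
Your approach is correct and shares the paper's core strategy: factor $f_i$ through the $(\gamma,r_0)$-regular charts supplied by Lemma~\ref{local estimate}, reduce to Montel compactness for the resulting holomorphic transition maps, and inherit the $W^{2,2}$ and $L^\infty$ convergence from those charts. The difference is one of efficiency. You apply Lemma~\ref{local estimate} at every base point $y\in\overline{D_{r_0/4}}$, obtain local factorizations $f_i=\tilde\varphi_i\circ g_i$ on small discs $D_\rho(y)$, and then patch via a finite cover (correctly identifying and resolving the coherence issue through the global $C^0$ limit). The paper instead applies Lemma~\ref{local estimate} \emph{once}, at the center: since $|\nabla f_i|\le 1$ and $f_i(0)=x_i$, one has
\[
f_i(D_{r_0/2})\subset B_{r_0/2}(x_i)\cap\Sigma_i\subset\varphi_i(D_{r_0})
\]
directly from the containment clause \eqref{eq:containment} in the definition of $(\gamma,r_0)$-regularity, so a single global factorization $f_i=\varphi_i\circ\psi_i$ exists on all of $D_{r_0/2}$. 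This eliminates the covering, the successive subsequence extraction, and the gluing step you flag as the main technical obstacle. Your route is not wrong, but the Lipschitz bound already buys globally the control you rebuild locally.
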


\begin{proof}
By Lemma~\ref{local estimate} there exist parametrizations  
$\varphi_i:D_{r_0}\to\Sigma_i$ and a $(\gamma,r_0)$-regular  
$\varphi:D_{r_0}\to\Sigma_0$ such that
\[
\varphi_i\to\varphi \text{ in }W^{2,2}(D_\sigma),\qquad 
v_i\to v \text{ in }L^\infty(D_\sigma),
\quad\forall\,\sigma<r_0.
\]

Since $|\nabla f_i|\le1$ and $f_i(0)=x_i$,  
\[
f_i(D_{r_0/2})\subset B_{r_0/2}(x_i)\subset \varphi_i(D_{r_0}),
\]
and hence there exists a unique conformal embedding  
\[
\psi_i:D_{r_0/2}\to D_{r_0},\qquad 
f_i=\varphi_i\circ\psi_i,\qquad \psi_i(0)=0.
\]

Comparing conformal factors yields
\[
u_i=v_i\circ\psi_i+\log|\psi_i'|.
\]
From $|\nabla f_i(0)|=1$ and $1-\gamma\le e^{v_i}\le1+\gamma$ we obtain a uniform nondegenerate bound on $|\psi_i'(0)|$.  
Thus $\{\psi_i\}$ is a normal family, and by Montel’s theorem,
\[
\psi_i\to\psi
\quad\text{locally uniformly,}
\]
where $\psi$ is holomorphic, nonconstant, and satisfies $\psi(0)=0$.

Define $f=\varphi\circ\psi$.  
Since $\psi_i\to\psi$ in $C^\infty(D_{r_0/4})$ and  
$\varphi_i\to\varphi$ in $W^{2,2}(D_{r_0/4})$,  
\[
f_i=\varphi_i\circ\psi_i\to\varphi\circ\psi=f
\quad\text{in }W^{2,2}(D_{r_0/4}).
\]
The convergence of $u_i$ follows from  
$u_i=v_i\circ\psi_i+\log|\psi_i'|$ and the uniform convergence $v_i\to v$.  
\end{proof}

We can now prove a global convergence result for conformal parametrizations.  
The qualitative part of our stability theorem will be an immediate consequence of this statement.

\begin{theorem}[Global convergence of conformal parametrizations]\label{thm:global-convergence}
Fix $\gamma,r_0,V\in(0,\infty)$.  
Let $\Sigma_0\subset\mathbb{S}^n$ be a compact minimal surface and 
$\Sigma_i\subset\mathbb{S}^n\subset\mathbb{R}^{n+1}$  
a sequence of $(\gamma,r_0)$-regular compact surfaces with $\mathcal{H}^2(\Sigma_i)\le V$ and 
\[
d_{\mathcal H}(\Sigma_i,\Sigma_0)+\|H_i\|_{L^2(\Sigma_i)}\longrightarrow 0.
\]
Here $H_i$ denote the mean curvature vectors of $\Sigma_i$
in the unit sphere $\mathbb{S}^n\subset\mathbb{R}^{n+1}$.

Assume moreover that all $\Sigma_i$ and $\Sigma_0$ have the same genus $g\ge1$.  
For each $i$ let
\[
f_i:(M_i,h_i)\longrightarrow\Sigma_i
\]
be a conformal parametrization, where $M_i$ is a volume-one flat torus if $g=1$, and a compact hyperbolic surface of constant curvature $-1$ if $g\ge2$.

Then, after passing to a subsequence, there exists a compact Riemann surface $(M_0,h_0)$ of genus $g$ and a conformal parametrization
\[
f_0:(M_0,h_0)\longrightarrow\Sigma_0.
\]
such that  $(M_i,h_i)$ converge smoothly to $(M_0,h_0)$ in the Cheeger-Gromov sense. That is,  we may choose diffeomorphisms
\[
\psi_i : M_0 \longrightarrow M_i
\]
such that $\psi_i^{*}h_i \to h_0$ smoothly; in particular, for $i$ large, each $\psi_i$ is $(1+\varepsilon_i)$-bi-Lipschitz with $\varepsilon_i\to 0$.  
With respect to these diffeomorphisms, the following hold:
\begin{itemize}
\item $f_i\circ\psi_i \to f_0$ in $W^{2,2}$;
\item the corresponding conformal factors $u_i$, defined by $f_i^{*}g_{\mathbb{R}^{n+1}} = e^{2u_i}h_i$, satisfy
\[
u_i \circ \psi_i \longrightarrow u_0 \quad\text{in } L^\infty,
\]
where $f_0^{*}g_{\mathbb{R}^{n+1}} = e^{2u_0}h_0$.
\end{itemize}
\end{theorem}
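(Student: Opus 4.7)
\medskip

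\noindent\textbf{Plan.} The plan is to (i) identify the candidate limit $(M_0,h_0,f_0)$ as the uniformization of $\Sigma_0$; (ii) establish compactness of the conformal moduli of $(M_i,h_i)$; and (iii) upgrade moduli convergence to $W^{2,2}$-convergence of the parametrizations via Lemma~\ref{lem:conformal-convergence}.

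For (i), I take $(M_0,h_0)$ to be the unit-area flat torus (if $g=1$) or the curvature-$(-1)$ hyperbolic surface (if $g\ge 2$) underlying the Riemann surface $\Sigma_0$, and let $f_0:(M_0,h_0)\to\Sigma_0$ be a conformal diffeomorphism; $f_0$ is smooth since $\Sigma_0$ is, and the conformal factor $u_0$ is smooth and bounded.

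The main technical step is (ii). For $i$ large, Proposition~\ref{prop:degree} shows that the nearest-point projection $\pi_i:\Sigma_i\to\Sigma_0$ has degree one; its proof produces the local factorization $\pi_i\circ\tilde\varphi_i=\tilde\varphi\circ g_i$ with $g_i\to\mathrm{id}$, and the $W^{2,2}$-convergence from Lemma~\ref{local estimate}, combined with the uniform bi-Lipschitz bounds from $(\gamma,r_0)$-regularity, shows that $g_i$ tends to the identity in a sufficiently strong sense. Hence each $\pi_i$ is a diffeomorphism whose conformal distortion tends to one, and
\[
\tilde f_i := f_i^{-1}\circ\pi_i^{-1}:\Sigma_0\longrightarrow M_i
\]
is a $(1+\varepsilon_i)$-quasi-conformal diffeomorphism with $\varepsilon_i\to 0$. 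Standard results in Teichm\"uller theory then give $d_{\mathrm T}((M_i,h_i),(M_0,h_0))\to 0$, and smooth dependence of the canonical metric representative on the conformal class supplies diffeomorphisms $\psi_i:M_0\to M_i$ with $\psi_i^{*}h_i\to h_0$ in $C^\infty$; in particular each $\psi_i$ is $(1+\varepsilon_i)$-bi-Lipschitz.

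For (iii), I set $F_i := f_i\circ\psi_i:M_0\to\Sigma_i$. Since $f_i$ is conformal with respect to $h_i$, $F_i$ is conformal with respect to $\psi_i^{*}h_i$, hence asymptotically conformal with respect to $h_0$. Covering $M_0$ by finitely many $h_0$-conformal coordinate disks and normalizing $\psi_i$ by a conformal automorphism of $(M_0,h_0)$ to fix a base point and a unit tangent vector, Lemma~\ref{lem:conformal-convergence} applied chart-by-chart yields local $W^{2,2}$-convergence $F_i\to f_0$ and $L^\infty$-convergence $u_i\circ\psi_i\to u_0$, with the limit a conformal embedding of $(M_0,h_0)$ onto $\Sigma_0$. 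Compatibility between charts follows from the rigidity of conformal automorphisms fixing a base point with a frame; finite covering then promotes the local convergences to the global statements.

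The main obstacle is step (ii): passing from pointwise/local control of $\pi_i$ to a global moduli bound. The Teichm\"uller-theoretic route above is the cleanest packaging, but relies on a sufficiently strong notion of dilation convergence for $\pi_i$. A self-contained alternative would work directly on universal covers: using $(\gamma,r_0)$-regularity and the area identity $\int_{M_i}e^{2u_i}\,dh_i=\mathrm{Area}(\Sigma_i)\le V$ together with the fixed area of $(M_i,h_i)$ to locate base points $p_i\in M_i$ at which $e^{u_i(p_i)}$ is uniformly bounded below, Koebe-type distortion estimates at $p_i$ would then exclude degeneration of the deck transformation group---that is, short lattice vectors when $g=1$, or short closed geodesics (via the collar lemma) when $g\ge 2$.
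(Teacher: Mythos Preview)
Your step (ii) has a genuine gap. The claim that the nearest-point projection $\pi_i:\Sigma_i\to\Sigma_0$ is a diffeomorphism with conformal distortion tending to $1$ in $L^\infty$ is not justified by the available control. Lemma~\ref{local estimate} gives $\tilde\varphi_i\to\tilde\varphi$ only in $W^{2,2}_{\mathrm{loc}}$, so $D\tilde\varphi_i\to D\tilde\varphi$ only in $W^{1,2}$ (hence $L^p$ for all $p<\infty$), not in $L^\infty$. Consequently the tangent planes of $\Sigma_i$ need not converge uniformly to those of $\Sigma_0$, and the Beltrami coefficient of $g_i=\tilde\varphi^{-1}\circ\pi_i\circ\tilde\varphi_i$ need not be small in $L^\infty$; in particular the Teichm\"uller distance $d_{\mathrm T}((M_i,h_i),(M_0,h_0))\to0$ does not follow. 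Even the weaker assertion that $\pi_i$ is a homeomorphism is not immediate: Proposition~\ref{prop:degree} yields $\deg\pi_i=1$, but degree $1$ between closed surfaces of the same genus gives only a homotopy equivalence, and you would still need $\pi_i$ to be a local homeomorphism everywhere, which again requires pointwise (not merely $L^p$) control on $Dg_i$.

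The paper takes a quite different route to moduli compactness, via bubble analysis. First one shows $\mathrm{inj}(M_i)\,\|df_i\|_{L^\infty}$ is uniformly bounded: if not, a point-rescaling produces a nonconstant finite-energy conformal harmonic map $\mathbb{R}^2\to\Sigma_0$, which by removable singularities extends to $\mathbb{S}^2\to\Sigma_0$---impossible since $g(\Sigma_0)\ge1$. Second, one shows $\mathrm{inj}(M_i)\ge\iota_0>0$: if a shortest closed geodesic $\gamma_i$ has length $\to0$, the collar lemma provides long cylinders on which one again extracts a limit conformal harmonic map from a cylinder; the limit is \emph{nonconstant} because $f_i(\gamma_i)$ cannot be null-homotopic in $\Sigma_i$ (here one uses that $\pi_i\circ f_i(\gamma_i)$ would be short hence contractible in $\Sigma_0$, and that the degree-$1$ map $\pi_i$ is a homotopy equivalence), and one reaches the same $\mathbb{S}^2$-bubble contradiction. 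Only then does Cheeger--Gromov compactness apply, after which your step (iii) is essentially correct---though note that one applies Lemma~\ref{lem:conformal-convergence} in \emph{$h_i$-conformal} charts $\phi_i$ on $M_i$ (where $f_i\circ\phi_i$ is exactly conformal), and then transfers via $\chi_i=\phi_i^{-1}\circ\psi_i\circ\phi\to\mathrm{id}$.

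Your ``self-contained alternative'' gestures at the right objects (short geodesics, collar lemma) but does not supply the mechanism that actually excludes degeneration. Locating a point where $e^{u_i}$ is bounded below and invoking Koebe-type distortion does not by itself rule out a pinching geodesic; the decisive input is the topological obstruction $\pi_2(\Sigma_0)=0$, used through the sphere-bubble argument above.
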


\begin{proof}
Since $f_i:(M_i,h_i)\to(\Sigma_i,g_i)$ is a conformal parametrization, its Dirichlet energy is equal to the area of $\Sigma_i$, and hence
\[
E(f_i)=\int_{M_i}|df_i|^2\,d\mu_{h_i}\;= \;2\mathcal{H}^2(\Sigma_i)\le V.
\]
Thus the energies $E(f_i)$ are uniformly bounded.

We claim that $\mathrm{inj}(M_i)|df_i|$ (which is $(\mathrm{inj}(M_i)\operatorname{tr}_{h_i}(f_i^*g_i))^{1/2}$) are uniformly bounded in $L^\infty$.  
Assume not. Then there exists a sequence of points $p_i\in M_i$ such that
\[
\mathrm{inj}(M_i)|df_i(p_i)|=\max_{x\in M_i}\mathrm{inj}(M_i)|df_i(x)|\longrightarrow\infty.
\]
Rescale the domain metrics by $\hat h_i=\lambda_i^2 h_i$ where $\lambda_i:=|df_i(p_i)|$ and consider the maps
\[
\hat f_i:(M_i,\hat h_i)\to\Sigma_i,\qquad \hat f_i=f_i.
\]
In the metric $\hat h_i$ we have $|d\hat f_i(p_i)|_{\hat h_i}=1$.  
Since $h_i$ has constant curvature (zero or $-1$) and the rescaling is by a constant factor, the sectional curvatures of $\hat h_i$ are uniformly bounded on compact sets, and by using exponential coordinates at $p_i$ with respect to $\hat h_i$ we may identify larger and larger balls around $p_i$ with Euclidean disks in $\mathbb{R}^2$.  
On each fixed disk, the maps $\hat f_i$ are weakly conformal immersions with uniformly bounded energy (coming from the global energy bound and conformal invariance in two dimensions), and their targets $\Sigma_i$ converge to $\Sigma_0$ in Hausdorff sense.  
By Lemma~\ref{lem:conformal-convergence}, after passing to a subsequence,
\[
\hat f_i\to f_\infty:\mathbb{R}^2\to\Sigma_0
\]
on compact subsets, where $f_\infty$ is a weakly conformal harmonic map. The normalization $|d\hat f_i(p_i)|_{\hat h_i}=1$ and  locally uniformly convergence of $\hat{u}_i=\frac{1}{2}(\log |d\hat{f}_i|-\log 2)$ imply
\[
|df_\infty(0)|=1,
\]
so $f_\infty$ is nonconstant.  
Moreover, the energy bound passes to the limit and $f_\infty$ has finite energy.  
By the removable singularity theorem for harmonic maps, $f_\infty$ extends across the point at infinity to a nonconstant branched conformal harmonic map
\[
F:\mathbb{S}^2\to\Sigma_0.
\]
Since $\Sigma_0$ has genus $g\ge1$, this is impossible.  
Thus our assumption was false, and there exists a constant $C$ such that
\[
\mathrm{inj}(M_i)\|df_i\|_{L^\infty(M_i)}\le C\qquad\text{for all }i.
\]

We now show that the injectivity radii $\mathrm{inj}(M_i)$ are uniformly bounded from below.  
Suppose instead that $\mathrm{inj}(M_i)\to0$.  
Let $\gamma_i\subset M_i$ be a closed geodesic realizing  the injectivity radius, so that
\[
\ell(\gamma_i)= 2\,\mathrm{inj}(M_i)\longrightarrow0.
\]
When $g=1$, each $(M_i,h_i)$ is a flat torus of area one, and the existence of such a short closed geodesic implies that $(M_i,h_i)$ contains a long thin flat cylinder neighbourhood of $\gamma_i$.  
When $g\ge2$, each $(M_i,h_i)$ is a compact hyperbolic surface of constant curvature $-1$, and the collar lemma shows that a short simple closed geodesic again carries a long hyperbolic cylinder neighbourhood.  
In either case we obtain a family of embedded cylinders
\[
C_i\cong[-L_i,L_i]\times\mathbb{S}^1\subset M_i,
\]
with $L_i\to\infty$, such that the central loop $\{0\}\times\mathbb{S}^1$ is a (nearly) shortest closed geodesic of length $\ell(\gamma_i)\to0$ and the metric on $C_i$ is uniformly equivalent (after a constant rescaling) to the standard flat cylinder metric.  
Restricting $f_i$ to $C_i$ and using the uniform $L^\infty$ bound on $|df_i|$ and the global energy bound, we see that the energy of $f_i$ on each fixed subcylinder $[-\ell,\ell]\times\mathbb{S}^1$ is uniformly bounded, independently of $i$.  
Passing to a subsequence and using estimates for conformal maps as above, we obtain a weakly conformal harmonic map
\[
f_\infty:\mathbb{R}\times\mathbb{S}^1\to\Sigma_0
\]
of finite energy.  

We now show that $f_\infty$ is not constant. In fact, we claim that
\[
\liminf_{i\to \infty}\Bigl(\ell(\gamma_i)\sup_{t\in\gamma_i}|df_i(t)|\Bigr)>0.
\]
Otherwise, the length of $f_i(\gamma_i)$ tends to $0$, so the length of the curves
\[
\pi_i\circ f_i(\gamma_i)\subset\Sigma_0
\]
also tends to $0$, and $\pi_i\circ f_i(\gamma_i)$ is null-homotopic for $i$ large.  Since $\pi_i$ is a degree $1$ map by Proposition~\ref{prop:degree}, it follows that $\pi_i\circ f_i(\gamma_i)$ is null-homotopic in $\Sigma_0$, and thus $f_i(\gamma_i)$ 
is also null-homotopic. Here we use the fact that any degree $1$ self-map of a closed surface is a homotopy 
equivalence: this is clear when the genus is $\le 1$, and for orientable surfaces of 
genus $\ge 2$ it follows from the Hopfian property of surface groups. 
On the other hand, $f_i$ is a homeomorphism, so this contradicts the choice of $\gamma_i$ as a shortest nontrivial closed geodesic. By Lemma~\ref{lem:conformal-convergence}, we conclude that $f_\infty$ is nontrivial.

By the removable singularity theorem for harmonic maps, $f_\infty$ extends to a nonconstant branched conformal harmonic map from $\mathbb{S}^2$ to $\Sigma_0$, which is again impossible.  
This contradiction shows that there exists $\iota_0>0$ such that
\[
\mathrm{inj}(M_i)\ge\iota_0\qquad\text{for all }i.
\]
    
    At this point the domain sequence $(M_i,h_i)$ has uniformly bounded curvature (zero or $-1$), a uniform positive lower bound on injectivity radius  and uniformly bounded area: when $g=1$ the
metric $h_i$ is flat and normalized to have area $1$, and when $g\ge2$ the
area bound follows from Gauss-Bonnet for hyperbolic metrics of constant
curvature $-1$.  

    By Cheeger-Gromov compactness (or Mumford compactness for constant curvature surfaces), there exist a compact surface $M_0$ of genus $g$, a smooth metric $h_0$ on $M_0$, and diffeomorphisms
    \[
    \psi_i:M_0\longrightarrow M_i
    \]
    such that $\psi_i^*h_i\to h_0$ smoothly on $M_0$.
    
    Consider the maps
    \[
    \tilde f_i:=f_i\circ\psi_i:(M_0,\psi_i^*h_i)\longrightarrow \Sigma_i.
    \]

    They have uniformly bounded energy and uniformly bounded gradients (since the
$f_i$ were conformal with uniformly controlled conformal factor).  Hence, after
passing to a subsequence, $\tilde f_i$ admits a $C^0$ limit,
denoted $f_0$.

To upgrade this to strong $W^{2,2}$ convergence and to obtain uniform
convergence of the conformal factors, fix $p\in M_0$ and set
$p_i:=\psi_i(p)$.  By Cheeger-Gromov convergence, we may choose conformal
charts $\phi,\phi_i : D\to M_0,M_i$ with $\phi(0)=p$ and $\phi_i(0)=p_i$, such
that
\[
\chi_i := \phi_i^{-1}\circ\psi_i\circ\phi \longrightarrow \mathrm{Id}
\quad\text{smoothly on }D.
\]

Lemma~\ref{lem:conformal-convergence} applied in the $M_i$-charts yields
\[
f_i\circ\psi_i \to f_0\circ\phi 
   \quad\text{in }W^{2,2}(D_{1/2}),\qquad
u_i\circ\phi_i \to u_0\circ\phi 
   \quad\text{in }L^\infty(D_{1/2}),
\]
where $u_i,u_0$ denote the corresponding conformal factors.

On $D_{1/2}$ we have the identities
\[
(f_i\circ\psi_i)\circ\phi 
   = (f_i\circ\phi_i)\circ\chi_i,
\qquad
(u_i\circ\psi_i)\circ\phi 
   = (u_i\circ\phi_i)\circ\chi_i,
\]
and since $\chi_i\to\mathrm{Id}$ smoothly, composition stability of $W^{2,2}$
and $L^\infty$ implies
\[
f_i\circ\psi_i \longrightarrow f_0
   \quad\text{in } W^{2,2}\text{ near }p,
\qquad
u_i\circ\psi_i \longrightarrow u_0
   \quad\text{uniformly near }p.
\]

A finite covering of $M_0$ by such charts now yields the global convergences
\[
f_i\circ\psi_i \longrightarrow f_0 
     \quad\text{in }W^{2,2}(M_0),
\qquad
u_i\circ\psi_i \longrightarrow u_0
     \quad\text{in }L^\infty(M_0).
\]
Moreover, \(f_0\) is also the \(W^{2,2}\)-limit of \(\pi_i \circ \tilde f_i\), where \(\pi_i : \Sigma_i \to \Sigma_0\) is the nearest-point projection.  
By Proposition~\ref{prop:degree}, each \(\pi_i\) has degree \(1\); hence \(\pi_i \circ \tilde f_i\) has degree \(1\), and therefore the limit map \(f_0\) also has degree \(1\).Consequently, \(f_0\) is a global conformal diffeomorphism from \(M_0\) onto \(\Sigma_0\).
\end{proof}

\begin{theorem}[Qualitative stability of the Clifford torus]\label{Thm: qualitative-main}
Under the assumptions of Theorem~\ref{Thm: main}, 
there exists a diffeomorphism
\[
f:\mathbb{S}^1\times \mathbb{S}^1\longrightarrow\Sigma
\]
such that
\[
f^{*}g_{\mathbb{S}^3}
= e^{2u}\,(a\, d\theta^2 + 2 b\, d\theta\, d\varphi + c\, d\varphi^2),
\qquad
f_0^{*}g_{\mathbb{S}^3}=\tfrac12(d\theta^2+d\varphi^2).
\]
Moreover, as $\delta\to 0$, the following quantities tend to zero:
\begin{enumerate}
\item 
\[
\| f - f_0 \|_{W^{2,2}(\mathbb{S}^1\times \mathbb{S}^1)} \longrightarrow 0,
\]

\item 
\[
\|u\|_{L^\infty(\mathbb{S}^1\times \mathbb{S}^1)} \longrightarrow 0,
\]

\item 
\[
|a - 1/2| + |b| + |c - 1/2| \longrightarrow 0.
\]
\end{enumerate}
\end{theorem}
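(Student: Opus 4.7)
The plan is a compactness–contradiction argument building on Theorem~\ref{thm:global-convergence}: assume the conclusion fails, so there is a sequence $V_i=\underline v(\Sigma_i,1)$ with $\mathcal{W}(V_i)\le 2\pi^2+\delta_i^2$ and $\delta_i\to 0$ for which no choice of parametrization yields all three convergences. By Lemma~\ref{topological rigidity} the $\Sigma_i$ are embedded tori, and Proposition~\ref{basic setting} supplies conformal transformations $F_i$ of $\mathbb{S}^3$ such that $\tilde\Sigma_i:=F_i(\Sigma_i)$ are $(\gamma,r_0)$-regular (for any fixed $\gamma>0$, along a subsequence), Hausdorff-converge to the Clifford torus $\mathbb{T}^2$, and satisfy $\|H_i\|_{L^2(\tilde\Sigma_i)}\to 0$ together with $\mathcal{H}^2(\tilde\Sigma_i)\to 2\pi^2$.

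Next, I would apply Theorem~\ref{thm:global-convergence} with $\Sigma_0=\mathbb{T}^2$ to obtain (after passing to a subsequence) conformal parametrizations $f_i:(M_i,h_i)\to\tilde\Sigma_i$ from unit-area flat tori, a limit conformal diffeomorphism $f_0:(M_0,h_0)\to\mathbb{T}^2$, and Cheeger–Gromov diffeomorphisms $\psi_i:M_0\to M_i$ that may be chosen as affine maps (realizing Teichm\"uller convergence of flat tori), with $\psi_i^*h_i\to h_0$ smoothly in constant coefficients, $f_i\circ\psi_i\to f_0$ in $W^{2,2}$, and $u_i\circ\psi_i\to u_0$ in $L^\infty$. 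Because $\mathbb{T}^2$ has square conformal structure, $(M_0,h_0)$ is isometric to the unit-area square flat torus. Both $h_0$ and $f_0^*g_{\mathbb{S}^3}$ are flat, so their conformal ratio $e^{2u_0}$ is harmonic hence constant, explicitly $u_0\equiv\tfrac12\log(2\pi^2)$ by area matching.

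The step I expect to be most delicate is aligning this limit picture with the standard Clifford embedding prescribed in the statement. My plan is to fix an affine identification $\Phi:\mathbb{S}^1\times\mathbb{S}^1\to M_0$ satisfying $\Phi^*(2\pi^2 h_0)=\tfrac12(d\theta^2+d\varphi^2)$, which exists because both sides are flat metrics of area $2\pi^2$ on square tori. The composition $f_0\circ\Phi$ is then a conformal diffeomorphism $\mathbb{S}^1\times\mathbb{S}^1\to\mathbb{T}^2$ differing from the standard embedding $f_0^{\mathrm{std}}$ only by a conformal automorphism of $\mathbb{T}^2$; every such automorphism extends to an isometry of $\mathbb{S}^3$ preserving $\mathbb{T}^2$ (generated by translations $(\theta,\varphi)\mapsto(\theta+\alpha,\varphi+\beta)$, the swap $(\theta,\varphi)\mapsto(\varphi,\theta)$, and reflections). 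Absorbing this ambiguity into the original $F_i$ one may assume $f_0\circ\Phi=f_0^{\mathrm{std}}$.

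Setting $\tilde f_i:=f_i\circ\psi_i\circ\Phi:\mathbb{S}^1\times\mathbb{S}^1\to\tilde\Sigma_i$, all three convergences fall out. Since $\psi_i$ and $\Phi$ are affine and each $h_i$ is flat, $\Phi^*\psi_i^*h_i$ has constant coefficients, yielding
\[
\tilde f_i^*g_{\mathbb{S}^3}=e^{2\hat u_i}\bigl(\tilde a_i\,d\theta^2+2\tilde b_i\,d\theta\,d\varphi+\tilde c_i\,d\varphi^2\bigr),
\]
where $(\tilde a_i,\tilde b_i,\tilde c_i):=2\pi^2\,\Phi^*\psi_i^*h_i\to(1/2,0,1/2)$ from $\psi_i^*h_i\to h_0$, and $\hat u_i:=u_i\circ\psi_i\circ\Phi-\tfrac12\log(2\pi^2)\to 0$ in $L^\infty$ from $u_i\circ\psi_i\to u_0\equiv\tfrac12\log(2\pi^2)$. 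The $W^{2,2}$ convergence $f_i\circ\psi_i\to f_0$ then gives $\tilde f_i\to f_0^{\mathrm{std}}$ in $W^{2,2}$, contradicting the initial assumption and completing the proof.
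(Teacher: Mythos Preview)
Your proof is correct and follows essentially the same approach as the paper: both apply Theorem~\ref{thm:global-convergence} to a sequence of conformally normalized tori converging to $\mathbb{T}^2$, then read off the three convergences from the conclusion of that theorem. The only cosmetic differences are your contradiction framing, your unit-area (rather than area-$2\pi^2$) normalization of the flat domain metrics---which you correctly compensate for via the constant shift $\hat u_i=u_i\circ\psi_i\circ\Phi-\tfrac12\log(2\pi^2)$---and your more explicit treatment of the alignment step, where you observe that conformal automorphisms of $\mathbb{T}^2$ extend to isometries of $\mathbb{S}^3$ and can be absorbed into the $F_i$.
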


\begin{proof}
For $j$ large, the uniformization theorem gives conformal parametrizations
\[
f_j:(\mathbb{S}^1\times \mathbb{S}^1,h_j)\to\Sigma_j,
\]
where $h_j$ is a flat metric with $\mathrm{Area}(\mathbb{S}^1\times \mathbb{S}^1,h_j)=2\pi^2$.
Applying Theorem~\ref{thm:global-convergence} (after composing with suitable conformal
automorphisms of $\mathbb{S}^3$), we obtain a limit parametrization
\[
f_0:(\mathbb{S}^1\times \mathbb{S}^1,h_0)\to\mathbb{T}^2
\]
such that
\[
f_j\to f_0 \text{ in }W^{2,2},\qquad
u_j\to u_0 \text{ in }L^\infty,\qquad
h_j\to h_0 \text{ smoothly}.
\]

Since $f_0$ is a conformal map between flat tori of the same area, it is an isometry.  
Thus we may choose global angular coordinates $(\theta,\varphi)$ so that
\[
h_0=\tfrac12(d\theta^2+d\varphi^2),\quad u_0=0.
\]
Each $h_j$ is a flat metric with area of $2\pi^2$ on $\mathbb{S}^1\times \mathbb{S}^1$, and hence in these fixed
coordinates it has the standard constant-coefficient form
\[
h_j=a_j\,d\theta^2+2b_j\,d\theta\,d\varphi+c_j\,d\varphi^2,
\qquad a_jc_j-b_j^2=\tfrac14.
\]
The smooth convergence $h_j\to h_0$ is equivalent to
\[
a_j\to\tfrac12,\qquad b_j\to 0,\qquad c_j\to\tfrac12.
\]

Together with $f_j\to f_0$ and $u_j\to u_0$, this yields the three convergences in
Theorem~\ref{Thm: qualitative-main}, completing the proof.
\end{proof}

 \section{Almost Willmore Minimizing Implies Almost Minimal }

In this section we study the almost minimizing property for surfaces whose
Willmore energy is sufficiently close to $2\pi^{2}$.  
A central object in our analysis is the canonical five-parameter family
$\{\Sigma_{(v,t)}\}$ introduced by Marques-Neves.  
Rather than repeating the full construction and discussion from the
Introduction, we recall only the variational problem that plays a decisive role:
\begin{equation}\label{eq:max-canonical-area}
\max_{(v,t)\in \mathring{B}^4\times[-\pi,\pi]}
\mathrm{Area}\bigl(\Sigma_{(v,t)}\bigr)\,.
\end{equation}

The fundamental result of Marques-Neves, stated as Theorem~\ref{2pi2-theorem} in
the Introduction, asserts that for any smooth embedded surface of genus at least
$1$, the canonical family satisfies
\[
\max_{(v,t)} \mathrm{Area}(\Sigma_{(v,t)}) \ge 2\pi^{2},
\]
with equality if and only if the surface is  the
Clifford torus.

In this section our aim is to show that whenever 
\(\Sigma_{(v,t)}\)  attains the maximal area in
\eqref{eq:max-canonical-area}, the corresponding surface
\(\Sigma_v=\Sigma_{(v,0)}\) must be, in a quantitative sense, almost minimal.  
This constitutes the key step in passing from a nearly minimizing Willmore
surface to an almost-minimizing surface in the canonical family.

Before analyzing the maximizers of \eqref{eq:max-canonical-area}, we recall the
qualitative stability established in Section~2.  

Recall that by the qualitative stability result
(Theorem~\ref{Thm: qualitative-main}), if 
\[
\mathcal{W}(\Sigma)\le 2\pi^2+\delta_0^2,
\]
then there exists a conformal transformation 
$F:\mathbb{S}^3\to\mathbb{S}^3$ such that the conformally normalized surface
\[
\Sigma' := F(\Sigma)
\]
is close to the Clifford torus in the sense of
Theorem~\ref{Thm: qualitative-main}.  
In particular, $\Sigma'$ is $(\gamma,r_0)$-regular, where 
\[
\gamma=\gamma(\delta_0)\longrightarrow 0 
\qquad\text{as }\delta_0\to 0,
\]
while $r_0>0$ is universal.

From this point on, we work with the normalized surface $\Sigma'$ and denote
it again by $\Sigma$ for simplicity of notation.  It is this normalized
surface---already close to the Clifford torus and $(\gamma,r_0)$-regular---that
we use to construct the canonical five-parameter family 
\[
\{\Sigma_{(v,t)}\}_{(v,t)\in\mathring{B}^4\times[-\pi,\pi]}.
\]

The following proposition provides the key estimate needed to connect the
Willmore energy gap to the geometry of the canonical family.  We emphasize that
its proof does not rely on the full strength of the qualitative stability
result from Section~2: for the purpose of this proposition, it suffices that the
surface under consideration is $(\gamma,r_0)$-regular for a parameter
$\gamma=\gamma(\delta_0)$ that can be taken arbitrarily small when
$\delta_0$ is sufficiently small.

\begin{proposition}\label{away from boundary of conformal group}
There exist constants $\eta_1>0$, $\delta_0>0$, and a function 
$\gamma=\gamma(\delta_0)>0$ with $\gamma(\delta_0)\to 0$ as $\delta_0\to 0$,
and a universal radius $r_0>0$, such that the following holds.

Let $\Sigma\subset\mathbb{S}^3$ be a smooth closed torus 
and assume that $\Sigma$ is $(\gamma,r_0)$-regular and $\mathcal{W}(\Sigma)\leq 8\pi$  
Consider the canonical family 
\[
\{\Sigma_{(v,t)}\}_{(v,t)\in\mathring{B}^4\times[-\pi,\pi]},
\]
and let $(v_0,t_0)$ be any maximizer of the area , i.e.
\[
\mathrm{Area}(\Sigma_{(v_0,t_0)}) 
 = \max_{(v,t)\in\mathring{B}^4\times[-\pi,\pi]}
   \mathrm{Area}(\Sigma_{(v,t)}).
\]
Then
\[
|v_0|\le 1-\eta_1.
\]
\end{proposition}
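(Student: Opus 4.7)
The plan is to argue by contradiction, invoking the Marques-Neves $2\pi^{2}$-Theorem (Theorem~\ref{2pi2-theorem}), which guarantees $\max_{(v,t)}\mathrm{Area}(\Sigma_{(v,t)})\ge 2\pi^{2}$ for any smooth embedded genus $\ge 1$ surface. Suppose the proposition fails, so there exists a sequence of $(\gamma_{n},r_{0})$-regular smooth closed tori $\Sigma_{n}$ with $\mathcal{W}(\Sigma_{n})\le 8\pi$ and $\gamma_{n}\to 0$, together with maximizers $(v_{n},t_{n})$ of the canonical family satisfying $|v_{n}|\to 1$. Passing to a subsequence I may assume $v_{n}\to v_{\infty}\in\partial B^{4}$, $t_{n}\to t_{\infty}\in[-\pi,\pi]$, and $\Sigma_{n}$ converges in Hausdorff distance to a closed set $\Sigma_{\infty}\subset\mathbb{S}^{3}$; set $p_{\pm}=\pm v_{\infty}$. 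The goal is to prove $\mathrm{Area}(\Sigma_{(v_{n},t_{n})})<2\pi^{2}$ for $n$ large, contradicting Theorem~\ref{2pi2-theorem}.

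The first step is to describe the transformed surface $\Sigma_{v_{n}}:=F_{v_{n}}(\Sigma_{n})$. The conformal factor of $F_{v}$ at a point $x\in\mathbb{S}^{3}$ is comparable to $(1-|v|^{2})/|x-v|^{2}$, which degenerates as $|v|\to 1$, while the poles $\pm v/|v|$ are fixed. Two regimes arise: if $p_{+}\notin\Sigma_{\infty}$ then $\dist(v_{n},\Sigma_{n})\ge c>0$ eventually, the conformal factor tends to $0$ uniformly on $\Sigma_{n}$, and $\mathrm{diam}(\Sigma_{v_{n}})\to 0$ with $\Sigma_{v_{n}}$ concentrating at $p_{-}$; if $p_{+}\in\Sigma_{\infty}$, the $(\gamma_{n},r_{0})$-regularity of $\Sigma_{n}$ yields a flat-disk bi-Lipschitz approximation of $\Sigma_{n}\cap B_{r_{0}}(p_{+})$, which, when transported through $F_{v_{n}}$, realizes $\Sigma_{v_{n}}$ as a bi-Lipschitz perturbation of a geodesic $2$-sphere $S_{n}\ni p_{-}$ together with an $o(1)$-neighborhood of $p_{-}$ coming from the contracted remainder. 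In either scenario $\Sigma_{v_{n}}$ is Hausdorff-close to a (possibly degenerate) geodesic $2$-sphere in $\mathbb{S}^{3}$, and the bi-Lipschitz control upgrades this to the quantitative bound $\mathrm{Area}(\Sigma_{v_{n}})\le 4\pi+o(1)$, since any geodesic $2$-sphere has area at most $4\pi$.

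The second step transfers this to the parallel slices. For $|t_{n}|\le\sigma$ with $\sigma>0$ small and fixed, the Steiner/Heintze-Karcher formula gives $|\mathrm{Area}(\Sigma_{(v_{n},t_{n})})-\mathrm{Area}(\Sigma_{v_{n}})|\lesssim |t_{n}|\,\mathcal{W}(\Sigma_{v_{n}})\le 8\pi\sigma$, so the slice area is bounded by $4\pi+8\pi\sigma+o(1)<5\pi$ once $\sigma$ is chosen (e.g.\ $\sigma=\tfrac{1}{16}$). For $|t_{n}|\ge\sigma$ a further split is needed. If $\Sigma_{v_{n}}$ stays a definite distance from both poles $\pm v_{n}/|v_{n}|$, pass to stereographic coordinates, in which the round metric is uniformly comparable to the Euclidean one; then $\Sigma_{v_{n}}$ is trapped between two nearly parallel planes at separation $o(1)$, and a Euclidean computation shows that its $t$-parallel boundary for $t\gtrsim\sigma$ is a Lipschitz graph with vanishing Lipschitz constant over one bounding plane, yielding area $\le 4\pi+o(1)$ after returning to $\mathbb{S}^{3}$. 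If $\Sigma_{v_{n}}$ instead lies $o(1)$-close to one pole, then $\mathrm{diam}(\Sigma_{v_{n}})=o(1)$ and a direct comparison of $d_{v_{n}}$ with the distance function to that pole shows $\Sigma_{(v_{n},t_{n})}$ is Hausdorff-close to a geodesic sphere of radius $|t_{n}|$, again with area $\le 4\pi+o(1)$. Combining all subcases, $\mathrm{Area}(\Sigma_{(v_{n},t_{n})})<5\pi<2\pi^{2}$ for $n$ large, the desired contradiction.

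The hardest step is promoting the Hausdorff-closeness of $\Sigma_{v_{n}}$ to a geodesic sphere into a sharp area estimate, since area is not conformally invariant and the $(\gamma,r_{0})$-regularity of $\Sigma_{n}$ does not pass uniformly to $\Sigma_{v_{n}}$ under the strongly distorting Möbius map $F_{v_{n}}$. I plan to handle it by pulling the bi-Lipschitz parametrizations of $\Sigma_{n}$ back through $F_{v_{n}}$, recording the induced conformal factor explicitly in stereographic coordinates centered at $p_{+}$, and assembling a finite covering of $\Sigma_{v_{n}}$ by transported patches so that its total area differs from the model geodesic-sphere area by only a multiplicative factor $1+o(1)$. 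The thin-slab analysis needed for large $|t|$ is technically separate but similar in spirit and is carried out by the same stereographic linearization, where the Euclidean model of parallel sets of nearly parallel planes becomes rigorous.
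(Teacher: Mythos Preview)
Your proposal is correct and follows essentially the same strategy as the paper's proof: reduce to showing $\mathrm{Area}(\Sigma_{(v,t)})\le 5\pi$ whenever $|v|$ is close to $1$, establish the geodesic-sphere approximation and area bound for $\Sigma_v$ via stereographic coordinates and the $(\gamma,r_0)$-regularity (the paper's Lemma~\ref{conformal boundary}), handle small $|t|$ by the Jacobian/Heintze--Karcher perturbation (Lemma~\ref{small t}), and treat large $|t|$ by the Lipschitz-graph argument when $\Sigma_v$ stays away from the poles and a comparison-geometry argument when it concentrates near one. The only notable difference is packaging: you run a contradiction/compactness argument with $\gamma_n\to 0$, whereas the paper obtains explicit quantitative rates ($\gamma^{1/2}$, $\gamma^{1/4}$) and, for the small-diameter case, uses an area comparison lemma $\mathcal{H}^2(\Sigma_{(v,t)})\le(\sin t/\sin\tau)^2\mathcal{H}^2(\Sigma_{(v,\tau)})$ rather than your direct comparison of signed distance functions---but both routes close the same gap.
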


\begin{remark}
The constant $8\pi$ is for convenience and can be replaced by any bounded constant, though $\eta_1$ would then depend on this constant.
\end{remark}

The proof of Proposition~\ref{away from boundary of conformal group} is somewhat
technical and will be presented in the Appendix.

With Proposition~\ref{away from boundary of conformal group} at hand, we can
derive a simple but important non-concentration estimate.

\begin{proposition}[Non-concentration of curvature]\label{Non-concentration of Gauss curvature}
Let $\Sigma\subset\mathbb{S}^{3}$ be the conformally normalized surface obtained
from a surface with $\mathcal{W}(\Sigma)\le 2\pi^{2}+\delta_{0}^{2}$ as in
Theorem~\ref{Thm: qualitative-main}.  In particular, $\Sigma$ satisfies the
qualitative stability conclusions of that theorem, and is therefore
$(\gamma,r_{0})$-regular with 
\[
\gamma=\gamma(\delta_{0})\longrightarrow 0 
\qquad\text{as}\quad \delta_{0}\to 0,
\]
where $r_{0}>0$ is universal.

Construct the canonical five-parameter family
\[
\{\Sigma_{(v,t)}\}_{(v,t)\in\mathring{B}^{4}\times[-\pi,\pi]},
\]
and let $(v_{0},t_{0})$ be any maximizer of the canonical area functional as in
Proposition~\ref{away from boundary of conformal group}.  
Set
\[
\Sigma_{v_{0}} := F_{v_{0}}(\Sigma),
\]
and let $A$ denote the second fundamental form of $\Sigma_{v_{0}}$.

Then there exists $\varepsilon_{0}>0$, depending only on
$\delta_{0}$ and the universal constants in Theorem~\ref{Thm: qualitative-main}
and Proposition~\ref{away from boundary of conformal group}, such that for every
measurable set $E\subset\Sigma_{v_{0}}$ with $\mathcal{H}^{2}(E)\le
\varepsilon_{0}$, one has
\[
\int_{E} |A|^{2}\, d\mathcal{H}^{2} \;\le\; \frac{1}{3}.
\]
\end{proposition}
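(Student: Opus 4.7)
The plan is to deduce the proposition from equi-integrability of the family $\{|A_{\Sigma_{v_0}}|^{2}\,d\mathcal{H}^{2}\}$ as $\Sigma$ varies over the admissible surfaces, combined with the uniform form of absolute continuity of the integral. The first step is to transfer all relevant bounds from $\Sigma$ to $\Sigma_{v_0}$. By Proposition~\ref{away from boundary of conformal group}, $|v_{0}|\le 1-\eta_{1}$, so $F_{v_{0}}$ is a smooth conformal diffeomorphism of $\mathbb{S}^{3}$ that is bilipschitz with constants $L=L(\eta_{1})$, with conformal factor $\lambda=|dF_{v_{0}}|$ bounded above and below. Conformal invariance of $\mathcal{W}$ together with Gauss-Bonnet on the torus $\Sigma_{v_{0}}$ yields the total bound $\int_{\Sigma_{v_{0}}}|A|^{2}=4\mathcal{W}(\Sigma_{v_{0}})-2\mathrm{Area}(\Sigma_{v_{0}})\le 8\pi^{2}+4\delta_{0}^{2}$, though this alone is not the needed non-concentration statement.

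The key step is the orthogonal splitting $|A|^{2}\,dA=|\mathring A|^{2}\,dA+\tfrac{1}{2}H^{2}\,dA$, together with the fact that the two summands behave very differently under conformal maps. The first, $|\mathring A|^{2}\,dA$, is pointwise conformally invariant in $\mathbb{R}^{4}$, so its equi-integrability transfers directly from $\Sigma$ to $\Sigma_{v_{0}}$ via the bilipschitz pushforward $F_{v_{0}}$. On the normalized $\Sigma$, Theorem~\ref{Thm: qualitative-main} provides conformal parametrizations $f_{\Sigma}:\mathbb{S}^{1}\times\mathbb{S}^{1}\to\Sigma$ with $\|f_{\Sigma}-f_{0}\|_{W^{2,2}}+\|u\|_{L^{\infty}}\le\eta(\delta_{0})$, where $\eta(\delta_{0})\to 0$ as $\delta_{0}\to 0$, uniformly over the family. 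Strong $W^{2,2}$ convergence of $f_{\Sigma}$ together with $L^{\infty}$ convergence of $u$ yields strong $L^{1}$ convergence of $|A_{\Sigma}|^{2}\,dA_{\Sigma}$ to the smooth bounded density $|A_{\mathbb{T}^{2}}|^{2}\,dA_{\mathbb{T}^{2}}=2\,dA_{\mathbb{T}^{2}}$, from which equi-integrability follows. For the second piece, the global estimate $\int_{\Sigma}H_{\Sigma}^{2}=4\bigl(\mathcal{W}(\Sigma)-\mathrm{Area}(\Sigma)\bigr)\to 0$ as $\delta_{0}\to 0$ follows from qualitative stability, since both $\mathcal{W}(\Sigma)$ and $\mathrm{Area}(\Sigma)$ converge to $2\pi^{2}$; combining conformal invariance of the Willmore density $(1+\tfrac{1}{4}H^{2})\,dA$ with the bilipschitz bound on $F_{v_{0}}$ then gives $\int_{E}H_{v_{0}}^{2}\,dA_{v_{0}}\le \eta_{H}(\delta_{0})+C\mathcal{H}^{2}(E)$ with $\eta_{H}(\delta_{0})\to 0$.

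The main technical obstacle is precisely the non-invariance of $|A|^{2}\,dA$ itself under conformal maps, which necessitates the two-piece splitting. The $|\mathring A|^{2}$ part is handled cleanly by conformal invariance and the strong $W^{2,2}$ compactness of Theorem~\ref{Thm: qualitative-main}, while the $H^{2}$ part is controlled through global smallness of $\int H^{2}$ on $\Sigma$ rather than any local equi-integrability statement on $\Sigma_{v_{0}}$. This is what dictates the dependence of $\varepsilon_{0}$ on $\delta_{0}$: one first fixes $\delta_{0}$ small so that $\eta_{H}(\delta_{0})$ is small enough, and then chooses $\varepsilon_{0}$ sufficiently small that $\omega(L^{2}\varepsilon_{0})+\tfrac{1}{2}\bigl(\eta_{H}(\delta_{0})+C\varepsilon_{0}\bigr)\le 1/3$, where $\omega$ denotes the equi-integrability modulus of $|\mathring A|^{2}\,dA$ on $\Sigma$ inherited from the $W^{2,2}$ convergence.
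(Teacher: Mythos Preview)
Your proposal is correct, and the overall structure---use Theorem~\ref{Thm: qualitative-main} to control curvature on $\Sigma$, then transfer to $\Sigma_{v_0}$ via the bound $|v_0|\le 1-\eta_1$ from Proposition~\ref{away from boundary of conformal group}---matches the paper exactly. The difference lies in the transfer mechanism. The paper simply observes that since $F_{v_0}$ ranges over the compact set $\{|v|\le 1-\eta_1\}$, one has a crude pointwise bound $|A_{\Sigma_{v_0}}(F_{v_0}(p))|\le C_2(|A_\Sigma(p)|+1)$, which immediately reduces the problem on $\Sigma_{v_0}$ to the one on $\Sigma$ without splitting $|A|^2$. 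Your route via the conformal invariance of $|\mathring A|^2\,dA$ and of the Willmore density is cleaner conceptually and makes the role of conformal geometry more explicit, at the cost of having to treat $H^2$ separately through the global smallness of $\int_\Sigma H^2$. Both arguments ultimately need $\delta_0$ small for the same reason: the bound $\int_U|D^2(f-f_0)|^2\le\eta(\delta_0)^2$ forces a residual term of size $\eta(\delta_0)^2$ in your modulus $\omega$, so what you call equi-integrability is really an estimate of the form $\omega(t)\le Ct+C\eta(\delta_0)^2$ rather than a modulus vanishing at $0$; the paper's inequality (2.4)--(2.5) has precisely the same structure. Your final paragraph correctly identifies this dependence, though you attribute it only to $\eta_H(\delta_0)$ when it in fact enters both pieces.
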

\begin{proof}
By Theorem~\ref{Thm: qualitative-main}, for $\delta_0>0$ sufficiently small
there exists a conformal parametrization
\[
f:\mathbb{S}^1\times \mathbb{S}^1\longrightarrow\Sigma
\]
such that
\[
f^{*}g_{\mathbb{S}^{3}}
 = e^{2u}\,(a\,d\theta^{2}+2b\,d\theta\,d\varphi+c\,d\varphi^{2}),
\qquad 
f_0^{*}g_{\mathbb{S}^{3}}
 = \tfrac12(d\theta^{2}+d\varphi^{2}),
\]
with
\[
\|f-f_0\|_{W^{2,2}(T^2)}\ll 1,\qquad
\|u\|_{L^\infty(T^2)}\ll 1,\qquad
|a-\tfrac12|+|b|+|c-\tfrac12|\ll 1.
\]
Hence $f^{*}g_{\mathbb{S}^3}$ is uniformly equivalent to the flat metric on
$\mathbb{S}^1\times \mathbb{S}^1$. For any measurable $E'\subset\Sigma$ we have
\begin{equation}\label{eq:A-Sigma-via-f}
\int_{E'} |A_\Sigma|^{2}\, d\mathcal{H}^{2}
 \;\le\; C_1\int_{f^{-1}(E')} \bigl(|D^{2}(f-f_0)|^{2}+1\bigr)\, d\theta d\varphi,
\end{equation}
and
\begin{equation}\label{eq:area-compare-f}
C_1^{-1} |f^{-1}(E')| \;\le\; \mathcal{H}^{2}(E') \;\le\; C_1 |f^{-1}(E')|,
\end{equation}
for a universal $C_1\ge1$.

Since $\|f-f_0\|_{W^{2,2}}\to 0$ as $\delta_0\to 0$, there exists
$\delta_1>0$ such that for every $U\subset \mathbb{S}^1\times \mathbb{S}^1$ with $|U|\le\delta_1$,
\[
\int_{U} \bigl(|D^{2}(f-f_0)|^{2}+1\bigr)\, d\theta d\varphi
 \;\le\; \frac{1}{6C_1C_2}.
\]
for some $C_2$ to be determined. By \eqref{eq:A-Sigma-via-f}–\eqref{eq:area-compare-f}, if
$\mathcal{H}^{2}(E')\le\delta_1/C_1$ then
\[
\int_{E'} |A_\Sigma|^{2}\, d\mathcal{H}^{2} \;\le\; \frac{1}{6C_2}.
\]

We now pass from $\Sigma$ to $\Sigma_{v_0}$.  
Proposition~\ref{away from boundary of conformal group} ensures that
$|v_0|\le 1-\eta_1$ for some universal $\eta_1>0$.  
Since $F_v$ depends smoothly on $v$ and $\{|v|\le 1-\eta_1\}$ is compact, there
exists a universal $C_2$ such that for all such $v$,
\[
C_2^{-1}\,F_{v\#} d\mathcal{H}^{2}_\Sigma
 \;\le\; d\mathcal{H}^{2}_{\Sigma_v}
 \;\le\; C_2\, F_{v\#}d\mathcal{H}^{2}_\Sigma,
\qquad
|A_{\Sigma_v}(F_v(p))|
 \;\le\; C_2\bigl(|A_\Sigma(p)|+1\bigr).
\]
Applying this with $v=v_0$ (denoting $A=A_{\Sigma_{v_0}}$), let
$E\subset\Sigma_{v_0}$ be measurable and set $E' := F_{v_0}^{-1}(E)$. Then
$\mathcal{H}^{2}(E')\le C_2\,\mathcal{H}^{2}(E)$ and
\[
\int_{E} |A|^{2}\, d\mathcal{H}^{2}_{\Sigma_{v_0}}
 \;\le\; C_2 \int_{E'} \bigl(|A_\Sigma|^{2}+1\bigr)\, d\mathcal{H}^{2}.
\]

Choose
\[
\varepsilon_{0} := \frac{\delta_{1}}{C_1C_2}.
\]
If $\mathcal{H}^{2}(E)\le\varepsilon_{0}$ then
$\mathcal{H}^{2}(E')\le \delta_{1}/C_1$, and therefore
\[
\int_{E'} |A_\Sigma|^{2}\, d\mathcal{H}^{2} \le \frac{1}{6C_2},
\qquad
\int_{E'} 1\, d\mathcal{H}^{2} \le \frac{1}{6C_2}.
\]
Thus
\[
\int_{E} |A|^{2}\, d\mathcal{H}^{2}_{\Sigma_{v_0}}
 \;\le\; C_2\!\left(\frac{1}{6}+\frac{1}{6}\right)
 \;\le\; \frac{1}{3}.
\]
This proves the desired non–concentration estimate.
\end{proof}

With the non–concentration estimate in hand, we can now establish the main
result of this section: any surface whose Willmore energy is sufficiently close
to \(2\pi^{2}\) becomes, after a suitable conformal transformation, almost
area-maximizing and almost minimal in a quantitative sense.

\begin{theorem}\label{almost minimizing}
There exists $\delta_{0}>0$ such that for any smooth surface 
$\Sigma\subset\mathbb{S}^{3}$ of positive genus with 
$\mathcal{W}(\Sigma)\le 2\pi^{2}+\delta^{2}$ for some $\delta<\delta_{0}$,
there exists a conformal transformation 
$F_{v_{0}}:\mathbb{S}^{3}\to\mathbb{S}^{3}$ such that the surface
$\Sigma_{v_{0}}:=F_{v_{0}}(\Sigma)$ satisfies
\[
\int_{\Sigma_{v_{0}}} |H|^{2}\, d\mathcal{H}^{2}\;\le\; C\delta^{2},
\qquad
\mathrm{Area}(\Sigma_{v_{0}})
\;\ge\; (1-C\delta^{2})\, 2\pi^{2}.
\]
\end{theorem}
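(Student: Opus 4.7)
My plan is to exploit the canonical five-parameter family together with a Cauchy-Schwarz argument and the Gauss-Bonnet identity on the torus, reducing the theorem to a single algebraic inequality.

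First, I would use the qualitative stability of Section~2 to reduce to the case where $\Sigma$ is $(\gamma,r_0)$-regular and Hausdorff close to the Clifford torus, with the regularity parameter satisfying $\gamma=\gamma(\delta)\to 0$. Form the canonical family $\{\Sigma_{(v,t)}\}$ built from this normalized $\Sigma$ and let $(v_0,t_0)$ be a maximizer of area. The $2\pi^2$-Theorem (Theorem~\ref{2pi2-theorem}) gives $\mathrm{Area}(\Sigma_{(v_0,t_0)})\ge 2\pi^2$, and Proposition~\ref{away from boundary of conformal group} provides $|v_0|\le 1-\eta_1$. Write $\Sigma_{v_0}:=F_{v_0}(\Sigma)$, $A:=\mathrm{Area}(\Sigma_{v_0})$, $M:=\int_{\Sigma_{v_0}}H\,d\mu$, and $\bar H:=M/A$.

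The key computation is the Jacobian of the normal flow $\phi_t\colon\Sigma_{v_0}\to\mathbb{S}^3$. In the no-focal regime $\phi_t$ is a diffeomorphism onto $\Sigma_{(v_0,t)}$, and integrating the Jacobian $(\cos t-\kappa_1\sin t)(\cos t-\kappa_2\sin t)$ using the Gauss equation in $\mathbb{S}^3$ together with Gauss-Bonnet on the torus (which give $\int\kappa_1\kappa_2\,d\mu=-A$) yields the identity
\[
\mathrm{Area}(\Sigma_{(v_0,t)}) \;=\; A\cos(2t)-\tfrac{M}{2}\sin(2t)
 \;\le\; \sqrt{A^2+M^2/4}\;=\;A\sqrt{1+\bar H^2/4}.
\]
Squaring the resulting inequality $2\pi^2\le A\sqrt{1+\bar H^2/4}$ and using Cauchy-Schwarz $M^2\le A\!\int H^2\,d\mu$,
\[
4\pi^4 \;\le\; A^2 + A\!\!\int H^2\,d\mu/4 \;=\; A\cdot\mathcal{W}(\Sigma_{v_0}) \;=\; A\cdot\mathcal{W}(\Sigma).
\]
Combined with $\mathcal{W}(\Sigma)\le 2\pi^2+\delta^2$, this yields $A\ge 4\pi^4/(2\pi^2+\delta^2)\ge 2\pi^2-\delta^2$, and consequently $\int H^2\,d\mu=4(\mathcal{W}-A)\le 8\delta^2$. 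Both claimed estimates follow with explicit universal constants.

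The main obstacle is verifying that the no-focal-point assumption is met at the maximizer $t_0$ itself. Qualitative stability gives an $L^\infty$ bound on the second fundamental form of $\Sigma_{v_0}$, so focal points of the normal flow are absent on $|t|<\arctan(1/K)$ for a universal $K$; qualitative stability also forces $\bar H\to 0$ as $\delta\to 0$, placing the trigonometric extremum $t^*=-\tfrac12\arctan(\bar H/2)$ well inside this safe window. What remains is to rule out the possibility that $t_0$ lies in the focal region and still gives area larger than $A\sqrt{1+\bar H^2/4}$; this is where Proposition~\ref{Non-concentration of Gauss curvature} and a short continuity/bootstrap argument should be used to pin $|t_0|$ down inside the no-focal interval, and is the principal technical step in turning the formal argument above into a rigorous proof.
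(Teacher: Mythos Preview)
Your formal computation is clean, and the shortcut via $\int\kappa_1\kappa_2=-A$ on the torus together with Cauchy--Schwarz is attractive.  However, the argument has a genuine gap at the no-focal step, and the fix you sketch does not work.

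The assertion that ``qualitative stability gives an $L^\infty$ bound on the second fundamental form of $\Sigma_{v_0}$'' is incorrect.  Theorem~\ref{Thm: qualitative-main} provides only $W^{2,2}$-closeness of the conformal parametrization to that of the Clifford torus; the second fundamental form is controlled in $L^2$ but \emph{not} in $L^\infty$.  Hence there is no uniform focal radius $\arctan(1/K)$, and you cannot assert that your identity
\[
\mathrm{Area}(\Sigma_{(v_0,t)})=A\cos(2t)-\tfrac{M}{2}\sin(2t)
\]
holds at $t_0$.  What is always available is only the Heintze--Karcher inequality
\[
\mathrm{Area}(\Sigma_{(v_0,t_0)})\;\le\;\int_{\Sigma_{v_0}^{*}(t_0)} J_{\varphi_{t_0}}\,d\mathcal{H}^2,
\]
where $\Sigma_{v_0}^{*}(t_0)\subset\Sigma_{v_0}$ is the set of base points realizing the distance and on which $J_{\varphi_{t_0}}\ge 0$.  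Your identity would require integrating $J$ over \emph{all} of $\Sigma_{v_0}$; but on the complementary set $E_{v_0}(t_0)=\Sigma_{v_0}\setminus\Sigma_{v_0}^{*}(t_0)$ the Jacobian has already crossed zero and is typically negative, so the step
\(
\int_{\Sigma_{v_0}^{*}(t_0)}J\le\int_{\Sigma_{v_0}}J
\)
can fail.  Thus the chain $2\pi^2\le\mathrm{Area}(\Sigma_{(v_0,t_0)})\le A\cos(2t_0)-\tfrac{M}{2}\sin(2t_0)$ is not justified, and your suggestion of a ``short continuity/bootstrap argument to pin $|t_0|$ inside the no-focal interval'' cannot get started without the $L^\infty$ curvature bound you are assuming.

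This is exactly why the paper does \emph{not} try to integrate $J$ over the whole surface.  Instead it uses the pointwise decomposition
\[
J_{\varphi_{t_0}}=\Bigl(1+\tfrac{|H|^2}{4}\Bigr)-\Bigl(\tfrac{H}{2}\cos t_0-\sin t_0\Bigr)^{2}-\sin^{2}t_0\,|\mathring{A}|^{2}
\]
on $\Sigma_{v_0}^{*}(t_0)$ only, sandwiches the resulting integral between $2\pi^2$ and $\mathcal{W}(\Sigma)\le 2\pi^2+\delta^2$, and reads off three separate smallness estimates (for $\mu(E_{v_0}(t_0))$, for $\int_{\Sigma^{*}}(\tfrac{H}{2}\cos t_0-\sin t_0)^2$, and for $\sin^2 t_0\int_{\Sigma^{*}}|\mathring{A}|^2$).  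Proposition~\ref{Non-concentration of Gauss curvature} then enters with purely $L^2$ curvature information---$\int_E|A|^2\le\tfrac13$ whenever $\mu(E)$ is small---and together with the Gauss--Bonnet identity $\int_{\Sigma_{v_0}}|\mathring{A}|^2=2\mathcal{W}(\Sigma_{v_0})\ge 4\pi^2$ this forces $\sin^2 t_0\le C\delta^2$; the two conclusions follow.  No $L^\infty$ bound on $A$ and no no-focal assumption are needed anywhere.
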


\begin{proof}[Proof of Theorem~\ref{almost minimizing}]
Let $\Sigma\subset\mathbb{S}^{3}$ be a smooth surface of positive genus with
$\mathcal{W}(\Sigma)\le 2\pi^{2}+\delta^{2}$ and $\delta<\delta_{0}$, as in the statement.  
By conformal invariance of the Willmore energy and Theorem~\ref{Thm: qualitative-main},
we may apply a conformal transformation $F:\mathbb{S}^{3}\to\mathbb{S}^{3}$ so that 
the transformed surface
\[
\Sigma' := F(\Sigma)
\]
is the conformally normalized surface provided by that theorem.  
As in the preceding discussion, we henceforth work with this normalized surface 
and continue to denote it simply by~$\Sigma$.

Fix $v\in\mathring{B}^{4}$ and recall that 
$\Sigma_{v}=F_{v}(\Sigma)$, with unit normal pointing into $A_{v}^{*}$ given by
\[
N_{v}(y)=\frac{DF_{v}(N(y))}{|DF_{v}(N(y))|}.
\]
For $t\in(-\pi,\pi)$ we consider the normal exponential map
\[
\varphi_{t}(y)=\exp_{y}(tN_{v}(y))
=\cos t\, y+\sin t\, N_{v}(y),
\qquad y\in\Sigma_{v}.
\]
Every point of $\Sigma_{(v,t)}$ lies at its closest distance $t$ from some point of
$\Sigma_{v}$ along the normal geodesic $\tau\mapsto\varphi_{\tau}(y)$.  
Thus
\[
\Sigma_{(v,t)}\subset \varphi_{t}(\Sigma_{v}^{*}(t)),
\]
where $\Sigma_{v}^{*}(t)$ consists of those points $y\in\Sigma_{v}$ that realize
the closest–point projection onto $\Sigma_{v}$.  
By \cite[Lemma~3.5]{MN14} (see also \cite{R99}), the signed Jacobian of
$\varphi_{t}$ is
\[
J_{\varphi_{t}}
=\Bigl(1+\frac{|H|^{2}}{4}\Bigr)
-\Bigl(\frac{H}{2}\cos t - \sin t\Bigr)^{2}
-\frac{\sin^{2}t}{4}(\lambda_{1}-\lambda_{2})^{2},
\]
where $\lambda_{1},\lambda_{2}$ are the principal curvatures of $\Sigma_{v}$.  
Arguing as in \cite{MN14}, for any $y\in\Sigma_{v}^{*}(t)$ one has
$J_{\varphi_{\tau}}(y)>0$ for all $\tau\in[0,t)$ and hence $J_{\varphi_{t}}(y)\ge0$.  
Therefore,
\begin{equation}\label{Area formula}
\mathcal{H}^{2}(\Sigma_{(v,t)})
\;\le\;
\int_{\Sigma_{v}^{*}(t)} J_{\varphi_{t}}\, d\mathcal{H}^{2}.
\end{equation}
We also write 
\[
E_{v}(t):=\Sigma_{v}\setminus\Sigma_{v}^{*}(t),
\]
which consists of those points not hit by a shortest normal geodesic from
$\Sigma_{(v,t)}$.

Now let $(v_{0},t_{0})$ be a maximizer of the canonical area functional as in
Proposition~\ref{away from boundary of conformal group}.  
By the $2\pi^{2}$-Theorem~\ref{2pi2-theorem} and \eqref{Area formula},
\begin{align*}
2\pi^{2}
\;\le\; \mathrm{Area}(\Sigma_{(v_{0},t_{0})})
\;\le\; \int_{\Sigma_{v_{0}}^{*}(t_{0})} J_{\varphi_{t_{0}}}\, d\mathcal{H}^{2}.
\end{align*}
Using again \cite[Lemma~3.5]{MN14} (see also \cite{R99}), we have
\[
J_{\varphi_{t_{0}}}(p)
=\Bigl(1+\frac{|H|^{2}}{4}\Bigr)
 -\Bigl(\frac{H}{2}\cos t_{0}-\sin t_{0}\Bigr)^{2}
 -\sin^{2}t_{0}\, |\mathring{A}|^{2},
\]
where $H$ is the mean curvature and $ \mathring{A}=A-\frac{H}{2}g$. Hence
\begin{align*}
2\pi^{2}
&\le \int_{\Sigma_{v_{0}}^{*}(t_{0})}
\Bigl(1+\frac{|H|^{2}}{4}\Bigr)
 -\Bigl(\frac{H}{2}\cos t_{0}-\sin t_{0}\Bigr)^{2}
 -\sin^{2}t_{0}\, |\mathring{A}|^{2}\, d\mathcal{H}^{2} \\
&\le \int_{\Sigma_{v_{0}}^{*}(t_{0})} \Bigl(1+\frac{|H|^{2}}{4}\Bigr)\, d\mathcal{H}^{2} \\
&\le \mathcal{W}(\Sigma_{v_{0}}) \\
&=\mathcal{W}(\Sigma)\;\le\; 2\pi^{2}+\delta^{2}.
\end{align*}
This yields the three estimates
\begin{align}
\int_{E_{v_{0}}(t_{0})}\Bigl(1+\frac{|H|^{2}}{4}\Bigr)\, d\mathcal{H}^{2}
&\le \delta^{2}, \label{negative small} \\
\int_{\Sigma_{v_{0}}^{*}(t_{0})}
\Bigl(\frac{H}{2}\cos t_{0}-\sin t_{0}\Bigr)^{2} d\mathcal{H}^{2}
&\le \delta^{2}, \label{mean curvature small} \\
\sin^{2}t_{0} \int_{\Sigma_{v_{0}}^{*}(t_{0})} |\mathring{A}|^{2}\, d\mathcal{H}^{2}
&\le \delta^{2}. \label{small time}
\end{align}

From \eqref{small time} and \eqref{negative small} we deduce
\begin{equation}\label{construct trace free}
\sin^{2}t_{0}
\int_{\Sigma_{v_{0}}} |\mathring{A}|^{2}\, d\mathcal{H}^{2}
\;\le\; \delta^{2}
 + \sin^{2}t_{0}\int_{E_{v_{0}}(t_{0})} |\mathring{A}|^{2}\, d\mathcal{H}^{2}.
\end{equation}
Since the trace–free part of the second fundamental form is conformally
invariant, we have
\begin{align*}
\frac{1}{2}\int_{\Sigma_{v_{0}}}|\mathring{A}|^{2}\, d\mathcal{H}^{2}
&=\frac{1}{4}\int_{\Sigma_{v_{0}}}|H|^{2}\, d\mathcal{H}^{2}
 +\mathcal{H}^{2}(\Sigma_{v_{0}})-\int_{\Sigma_{v_{0}}}K\, d\mathcal{H}^{2} \\
&=\mathcal{W}(\Sigma_{v_{0}})\;\ge\; 2\pi^{2},
\end{align*}
where we used Gauss-Bonnet (noting that $g(\Sigma_{v_{0}})=1$) and the the Willmore conjecture~\cite{MN14},
Thus, from \eqref{construct trace free},
\[
\bigl(4\pi^{2}-\int_{E_{v_{0}}(t_{0})}|A|^{2}\, d\mathcal{H}^{2}\bigr)\,
\sin^{2}t_{0}
\;\le\; \delta^{2}.
\]
By Proposition~\ref{Non-concentration of Gauss curvature} we have
\[
\int_{E_{v_{0}}(t_{0})}|A|^{2}\, d\mathcal{H}^{2}\;\le\;\frac{1}{3},
\]
and hence, for $\delta<\delta_{0}\ll1$,
\[
\sin^{2}t_{0}\;\le\; \delta^{2}
\qquad\text{and}\qquad
\cos^{2}t_{0}\;\ge\; 1-\delta^{2}.
\]

Returning to \eqref{mean curvature small}, we obtain
\begin{align*}
\frac{1-\delta^{2}}{4}
\int_{\Sigma_{v_{0}}^{*}(t_{0})}|H|^{2}\, d\mathcal{H}^{2}
&\le \delta^{2}
 + \int_{\Sigma_{v_{0}}^{*}(t_{0})} H\cos t_{0}\sin t_{0}\, d\mathcal{H}^{2} \\
&\le \delta^{2}
 + \frac{1}{16}\int_{\Sigma_{v_{0}}^{*}(t_{0})}|H|^{2}\, d\mathcal{H}^{2}
 + 4\sin^{2}t_{0}\,\mathcal{H}^{2}(\Sigma_{v_{0}}),
\end{align*}
and therefore, for $\delta<\delta_{0}\ll1$,
\[
\frac{1}{8}\int_{\Sigma_{v_{0}}^{*}(t_{0})}|H|^{2}\, d\mathcal{H}^{2}
\;\le\; \delta^{2}
 + 4\delta^{2}\,\mathcal{W}(\Sigma_{v_{0}}).
\]
Combining this with \eqref{negative small} yields
\[
\int_{\Sigma_{v_{0}}}|H|^{2}\, d\mathcal{H}^{2}
\;\le\; C\,\delta^{2}
\]
for some universal constant $C$.

Finally, by the Willmore conjecture~\cite{MN14},
\[
\mathcal{W}(\Sigma_{v_{0}})=\mathcal{W}(\Sigma)\ge 2\pi^{2},
\]
so
\[
\mathrm{Area}(\Sigma_{v_{0}})
= \mathcal{W}(\Sigma_{v_{0}})
 - \frac{1}{4}\int_{\Sigma_{v_{0}}}|H|^{2}\, d\mathcal{H}^{2}
\;\ge\; 2\pi^{2}-C\delta^{2}.
\]
This proves the theorem.
\end{proof}

\section{From Almost-Minimizing to Quantitative Stability}
In this section we prove the quantitative stability result stated in
Theorem~\ref{Thm: main} by performing a linearized analysis in a neighbourhood
of the Clifford torus.
Denote the standard immersion $f_0:\mathbb{S}^1\times \mathbb{S}^1\to \mathbb{S}^3$ by 
$$f_0(\theta,\varphi)=\frac{1}{\sqrt{2}}(\cos \theta, \sin\theta, \cos \varphi, \sin \varphi)=:(\phi, \psi),$$
where $\phi=\frac{1}{\sqrt{2}}(\cos \theta, \sin\theta)$ and $\psi=\frac{1}{\sqrt{2}}(\cos \varphi, \sin \varphi)$ satisfies $|\phi|^2=|\psi|^2=\frac{1}{2}$. 
Then, 
\begin{align*}
f_{0\theta}&=(\psi_\theta,0)=\frac{1}{\sqrt{2}}(-\sin \theta, \cos \theta, 0, 0)\\
f_{0\varphi}&=(0, \psi_\varphi)=\frac{1}{\sqrt{2}}(0,0,-\sin \varphi, \cos \varphi).
\end{align*}
Thus by $\phi_\theta \cdot\phi=\psi_{\varphi}\cdot \psi=0$, we know $$n=(-\phi, \psi)$$
satisfies $n\cdot f_0=n\cdot f_{0\theta}=n\cdot f_{0\varphi}=0$, i.e., $n\in T_{f_0}\mathbb{S}^{3}\cap T_{f_0}^\bot \mathbb{T}^2$ is a unit normal vector of  $\mathbb{T}^2\subset \mathbb{S}^3$. 

Now, for any immersion $f:\mathbb{S}^1\times \mathbb{S}^1\to \mathbb{S}^3$, $f^{*}g$ is conformal to some flat torus $(\mathbb{S}^1\times \mathbb{S}^1,g_\tau)$, where $$g_\tau=ad\theta\otimes d\theta+b(d\theta\otimes d\varphi+d\varphi\otimes d\theta)+cd\varphi\otimes d\varphi$$ with  $ac-b^2=\frac{1}{4}$. In precisely, there exists a function $u$ on $\mathbb{S}^1\times \mathbb{S}^1$ such that 
$$df\otimes df=e^{2u}g_\tau.$$
\begin{proposition}\label{quantitative complex structure and area}
Assume $f = f_0 + h : \mathbb{S}^1\times \mathbb{S}^1 \to \mathbb{S}^3$ is an immersion with 
$\|h\|_{W^{1,2}(\mathbb{S}^1\times \mathbb{S}^1)} + \|H\|_{L^1(d\mu_f)} \ll 1$. Then 
$$|b| + |a - \tfrac{1}{2}| + |c - \tfrac{1}{2}| + |\mathrm{Area}(\Sigma) - 2\pi^2| \le C(\|H\|_{L^1(d\mu_f)} + \|h\|^2_{W^{1,2}(\mathbb{S}^1\times \mathbb{S}^1)}),$$
where $\Sigma = f(\mathbb{S}^1\times \mathbb{S}^1) \subset \mathbb{S}^3$.
\end{proposition}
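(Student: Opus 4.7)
The plan is to reduce the four estimates to the single scalar bound
\[
\Bigl|\int_{\mathbb S^1\times\mathbb S^1} n\cdot h\,d\theta d\varphi\Bigr|\le C\bigl(\|H\|_{L^1(d\mu_f)}+\|h\|_{W^{1,2}}^2\bigr),
\]
where $n=(-\phi,\psi)$ is the Clifford unit normal, and then derive this bound from the first variation of area with a carefully chosen linear test field on $\mathbb R^4$. Integrating the pointwise identity $f^*g_{\mathbb S^3}=e^{2u}g_\tau$ and using $\mathrm{Area}(\Sigma)=\tfrac12\int e^{2u}\,d\theta d\varphi$ yields the closed forms
\[
a=\frac{\int|f_\theta|^2\,d\theta d\varphi}{2\,\mathrm{Area}(\Sigma)},\quad
b=\frac{\int f_\theta\!\cdot\!f_\varphi\,d\theta d\varphi}{2\,\mathrm{Area}(\Sigma)},\quad
c=\frac{\int|f_\varphi|^2\,d\theta d\varphi}{2\,\mathrm{Area}(\Sigma)}.
\]
Substituting $f=f_0+h$ and expanding to first order, one checks by integration by parts on the torus that the linear-in-$h$ term of $\int f_\theta\cdot f_\varphi$ equals $-2\int f_{0\theta\varphi}\cdot h=0$, and that the linear term of $\mathrm{Area}(\Sigma)$ equals $\int f_0\cdot h=-\tfrac12\int|h|^2$ by $|f|^2\equiv 1$; hence $|b|$ and $|\mathrm{Area}(\Sigma)-2\pi^2|$ are already $O(\|h\|_{W^{1,2}}^2)$. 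For $a-\tfrac12$ and $c-\tfrac12$ the surviving linear term, after integration by parts and the identities $f_{0\theta\theta}=-(\phi,0)$, $f_{0\varphi\varphi}=-(0,\psi)$, simplifies to $\mp\tfrac{1}{4\pi^2}\int n\cdot h\,d\theta d\varphi$, so the proposition does reduce to the displayed scalar bound.

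To establish that bound I would test against the linear vector field $X:\mathbb R^4\to\mathbb R^4$, $X(y)=Ay$, with $A=\mathrm{diag}(-1,-1,1,1)$. This $X$ satisfies $X|_{\mathbb T^2}=n$, $Af_{0\theta}=-f_{0\theta}$, $Af_{0\varphi}=f_{0\varphi}$, and by symmetry of $A$ one has $f\cdot X(f)=f\cdot Af=2(n\cdot h)+O(|h|^2)$. A term-by-term expansion of the formula $\mathrm{div}_\Sigma X=(f^*g)^{ij}\langle A\partial_j f,\partial_i f\rangle$ shows that $\mathrm{div}_{\mathbb T^2}X\equiv 0$ and that the first-order variations coming from $\delta(g^{ij})$ and from $\delta\langle Af_j,f_i\rangle$ cancel exactly, leaving $\mathrm{div}_\Sigma X=O(|h_\theta|^2+|h_\varphi|^2)$ pointwise. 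The first-variation identity
\[
\int_\Sigma \vec H_{\mathbb R^4}\!\cdot\! X\,d\mu_f=-\int_\Sigma \mathrm{div}_\Sigma X\,d\mu_f=O(\|h\|_{W^{1,2}}^2),
\]
combined with $\vec H_{\mathbb R^4}=\vec H_{\mathbb S^3}-2f$, rearranges to $2\int_\Sigma f\cdot X\,d\mu_f=\int_\Sigma \vec H_{\mathbb S^3}\cdot X\,d\mu_f+O(\|h\|_{W^{1,2}}^2)$. Substituting $f\cdot X=2(n\cdot h)+O(|h|^2)$ on the left, converting $d\mu_f$ to $d\theta d\varphi$ via $\sqrt{\det f^*g}=\tfrac12+O(h)$, and estimating the right-hand side by $\|X\|_{L^\infty(\mathbb S^3)}\|H\|_{L^1(d\mu_f)}=\|H\|_{L^1(d\mu_f)}$ yields the claimed bound.

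The main obstacle is the cancellation $\mathrm{div}_\Sigma X=O(|h_\theta|^2+|h_\varphi|^2)$ at the linear level in $h$: without it, a genuine linear-in-$h$ remainder would survive on the right-hand side of the first-variation identity, which could not be controlled by the $W^{1,2}$-norm of $h$ and the $L^1$-norm of $H$ alone. This cancellation is what singles out the reflection $A=\mathrm{diag}(-1,-1,1,1)$---whose eigenspaces are aligned with the two $\mathbb S^1$-factors of $\mathbb T^2$---from an arbitrary smooth extension of $n$, and it is precisely what makes the weak $W^{1,2}+L^1$ hypotheses of the proposition sufficient. The remaining ingredients---two integrations by parts with no boundary terms on the closed torus, the constraint $|f|^2\equiv 1$, and the transfer between $d\mu_f$ and $d\theta d\varphi$---are routine algebra.
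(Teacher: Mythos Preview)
Your proposal is correct and takes a genuinely different route from the paper.

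The paper decomposes $h=v+zn+wf_0$ into tangential, normal, and radial parts, derives the pointwise conformal identities \eqref{eq:a}--\eqref{eq:b} and a Cauchy--Riemann system for $v=(v^1,v^2)$, and then computes $I=\int H\cdot n\,d\mu_g$ by expanding $\Delta_{g_\tau}f+2e^{2u}f$ against the fixed Clifford normal $n$; the $(c-a)$ term emerges from the mixed contribution of $(\Delta_{g_\tau}-\Delta_{g_0})f_0$ and the Cauchy--Riemann relation.  You instead read off the closed forms $a=\tfrac{\int|f_\theta|^2}{2\,\mathrm{Area}}$, etc., reduce everything to the single scalar $\int n\cdot h$, and bound that via the ambient first-variation identity with the linear field $X(y)=\mathrm{diag}(-1,-1,1,1)\,y$; the crucial point is the two-level cancellation $\mathrm{div}_{\mathbb T^2}X=0$ and $\delta(\mathrm{div}_\Sigma X)|_{f_0}=0$, which isolates the reflection $A$ aligned with the two $\mathbb S^1$-factors as the distinguished test field.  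Your argument is shorter and more geometric; the paper's component decomposition is heavier here but yields the Cauchy--Riemann system \eqref{Cauchy Riemann equation of v} that is reused downstream in Lemma~\ref{z equation} and Theorem~\ref{W22 quantitative}.

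One point worth tightening: the pointwise claims ``$\mathrm{div}_\Sigma X=O(|dh|^2)$'' and ``$\sqrt{\det f^\ast g}=\tfrac12+O(h)$'' are delicate since only $\|h\|_{W^{1,2}}$ is assumed, so $|dh|$ need not be small pointwise.  For the divergence this is harmless: $|\mathrm{div}_\Sigma X|=|\mathrm{tr}(A|_{T\Sigma})|\le 2$ always, and the Taylor bound $\le C|dh|^2$ holds where $|dh|$ is small, so $|\mathrm{div}_\Sigma X|\le C|dh|^2$ globally; combined with $e^{2u}\le (a+c)^{-1}(|f_\theta|^2+|f_\varphi|^2)\le 2(1+|dh|^2)$ (using $a+c\ge 2\sqrt{ac}\ge 1$ from $ac-b^2=\tfrac14$) and $|h|\le 2$, the integral against $d\mu_f$ is controlled by $\|dh\|_{L^2}^2$.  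For the area, rather than Taylor-expanding the density, it is cleaner to use the exact identity $\mathrm{Area}^2=4\bigl[(a\,\mathrm{Area})(c\,\mathrm{Area})-(b\,\mathrm{Area})^2\bigr]$ coming from $ac-b^2=\tfrac14$, together with your closed forms, which gives $\mathrm{Area}^2=4\pi^4+O(\|h\|_{W^{1,2}}^2)$ without any pointwise expansion.
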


\begin{proof}
Decompose $h = v + z n + w f_0$ with $w = -\tfrac{1}{2}|h|^2$ and $v = v^1 f_{0\theta} + v^2 f_{0\varphi}$ tangent. Using the derivatives of $f_0$ and $n$:
\begin{align*}
f_{0\theta} &= (-\phi_\theta, 0), \quad f_{0\varphi} = (0, \psi_\varphi), \quad |f_{0\theta}|^2 = |f_{0\varphi}|^2 = \tfrac{1}{2}, \\
n_\theta &= -f_{0\theta}, \quad n_\varphi = f_{0\varphi}, \\
f_{0\theta\theta} &= -\tfrac{1}{2}(f_0 - n), \quad f_{0\varphi\varphi} = -\tfrac{1}{2}(f_0 + n), \quad f_{0\theta\varphi} = 0.
\end{align*}
We compute the derivatives of $h$:
\begin{align*}
h_\theta &= (v^1_\theta - z + w) f_{0\theta} + v^2_\theta f_{0\varphi} + (w_\theta - \tfrac{v^1}{2}) f_0 + (z_\theta + \tfrac{v^1}{2}) n, \\
h_\varphi &= v^1_\varphi f_{0\theta} + (v^2_\varphi + z + w) f_{0\varphi} + (w_\varphi - \tfrac{v^2}{2}) f_0 + (z_\varphi - \tfrac{v^2}{2}) n.
\end{align*}
The conformal relations $df \otimes df = e^{2u} g_\tau$ yield:
\begin{align}
e^{2u} a - \tfrac{1}{2} &= 2 f_{0\theta} \cdot h_\theta + |h_\theta|^2 = v^1_\theta - z + w + |h_\theta|^2, \label{eq:a} \\
e^{2u} c - \tfrac{1}{2} &= 2 f_{0\varphi} \cdot h_\varphi + |h_\varphi|^2 = v^2_\varphi + z + w + |h_\varphi|^2, \label{eq:c} \\
e^{2u} b &= f_{0\theta} \cdot h_\varphi + f_{0\varphi} \cdot h_\theta + h_\theta \cdot h_\varphi = \tfrac{1}{2}(v^1_\varphi + v^2_\theta) + h_\theta \cdot h_\varphi. \label{eq:b}
\end{align}
Integrating over $\mathbb{S}^1\times \mathbb{S}^1$ and using periodicity (so integrals of $v^1_\theta$, $v^2_\varphi$, etc. vanish):
\begin{align*}
(2a - 1)A + \int z &= \int (|h_\theta|^2 - \tfrac{1}{2}|h|^2) - (A - 2\pi^2), \\
(2c - 1)A - \int z &= \int (|h_\varphi|^2 - \tfrac{1}{2}|h|^2) - (A - 2\pi^2), \\
2b A &= \int h_\theta \cdot h_\varphi,
\end{align*}
where $A = \mathrm{Area}(\Sigma) = \tfrac{1}{2} \int e^{2u}$. Immediately:
\begin{equation}\label{eq:b-est}
|b| \le C \|dh\|_{L^2}^2.
\end{equation}

Adding \eqref{eq:a} and \eqref{eq:c}:
\begin{equation}\label{eq:sum}
e^{2u}(a + c) - 1 = v^1_\theta + v^2_\varphi + 2w + |dh|^2 = O(|dh| + |dh|^2).
\end{equation}
Integration yields (noting $\int e^{2u} = 2A$ and $\int (v^1_\theta + v^2_\varphi) = 0$):
\begin{equation}\label{eq:a+c}
|(a + c)A - 2\pi^2| \le C \|h\|_{W^{1,2}}^2, \quad \text{so} \quad |a + c - 1| \le C(\|h\|_{W^{1,2}} + |A - 2\pi^2|).
\end{equation}

Now use the mean curvature $H = \Delta_g f + 2f$. The first variation gives:
\begin{align*}
I &:= \int H \cdot n \, d\mu_g = \int [-\nabla_g f \cdot \nabla_g n + 2z] \, d\mu_g=\int[-\nabla_{g_\tau} f\cdot \nabla_{g_\tau} n+2ze^{2u}]d\mu_{g_\tau}.
\end{align*}
Compute in coordinates (using $d\mu_{g_\tau} = \tfrac{1}{2}d\theta d\varphi$ and the metric $g_\tau$ with components $a,b,c$), we have (using the expressions for $h$ and the derivatives of $n$):
\begin{align*}
I &= \int \left[ (c - a)(1 - \tfrac{1}{2}|h|^2) + (e^{2u} - (a + c))z \right] d\theta d\varphi + O(\|h\|_{W^{1,2}}^2).
\end{align*}
From \eqref{eq:sum} and \eqref{eq:a+c}, we have $e^{2u} - (a + c) = s + O(|dh|)$ with $s = \frac{1 - (a + c)^2}{a + c}$. Thus by noting $|z|\le |h|$, we have
\begin{equation}\label{eq:I}
I = (c - a)(4\pi^2 - \tfrac{1}{2}\|h\|_{L^2}^2) + 4\pi^2 s \bar{z} + O(\|h\|_{W^{1,2}}^2),
\end{equation}
where $\bar{z} = \frac{1}{4\pi^2} \int z$. Since $|I| \le \|H\|_{L^1}$, we get:
\begin{equation}\label{eq:c-a-rough}
|c - a| \le C(\|H\|_{L^1} + |s||\bar{z}| + \|h\|_{W^{1,2}}^2).
\end{equation}

Subtracting the integrated conformal relations:
\begin{equation}\label{eq:c-a+z}
(c - a)A + \int z = \tfrac{1}{2} \int (|h_\theta|^2 - |h_\varphi|^2) \quad \Rightarrow \quad |(c - a)A + \int z| \le C \|dh\|_{L^2}^2.
\end{equation}
Combining \eqref{eq:c-a-rough} and \eqref{eq:c-a+z}, and noting that
$|s| \le C|a + c - 1| \le C(\|h\|_{W^{1,2}} + |A - 2\pi^2|)$.  Since  $\|h\|_{W^{1,2}}$ is small, we may invoke the dominated convergence theorem, which yields that the area $A =\frac{1}{2}\int e^{2u}= \int|df|^2$ is close to $2\pi^2$, in particular, $|A - 2\pi^2|$ is small.
Consequently, the quantity $s = \frac{1 - (a + c)^2}{a + c}$ is small in a qualitative sense. Thus
\begin{equation}\label{eq:c-a-z}
|c - a| + |\bar{z}| \le C(\|H\|_{L^1} + \|h\|_{W^{1,2}}^2).
\end{equation}
From $ac - b^2 = \tfrac{1}{4}$ and \eqref{eq:b-est}, we obtain:
\begin{equation}\label{eq:a-c-precise}
\max\{|a - \tfrac{1}{2}|, |c - \tfrac{1}{2}|\} \le |c - a| + b^2 \le C(\|H\|_{L^1} + \|h\|_{W^{1,2}}^2).
\end{equation}

Finally, from \eqref{eq:sum} and the area formula $A = \tfrac{1}{2} \int e^{2u}$:
\begin{equation}\label{eq:area}
|A - 2\pi^2| \le C(\|H\|_{L^1} + \|h\|_{W^{1,2}}^2).
\end{equation}

The proposition follows from \eqref{eq:b-est}, \eqref{eq:a-c-precise}, and \eqref{eq:area}.
\end{proof}

Since $h = v + z n + w f_0$ with $w = -\frac{|h|^2}{2}$ being higher order, 
we estimate $v$ and $z$. Rewriting the conformal relations gives the Cauchy-Riemann system for $v$:

\begin{align}\label{Cauchy Riemann equation of v}
\begin{aligned}
v^1_\theta - v^2_\varphi &= e^{2u}(a-c) + 2z - |h_\theta|^2 + |h_\varphi|^2, \\
v^1_\varphi + v^2_\theta &= 2e^{2u}b - 2h_\theta \cdot h_\varphi.
\end{aligned}
\end{align}

Since $|a-c|, |b|$ modulo higher order terms controlled by $\|H\|_{L^1}$, 
$v$ is well-estimated once $z$ is controlled, with the estimation error being of higher order.
The key is to derive an equation for $z$ via normal variation.

\begin{lemma}\label{z equation}
Let $f = f_0 + h: \mathbb{S}^1\times \mathbb{S}^1 \to \mathbb{S}^3$ be an immersion with 
$g_f = df \otimes df = e^{2u} g_\tau$ where 
$g_\tau = a d\theta \otimes d\theta + b(d\theta \otimes d\varphi + d\varphi \otimes d\theta) + c d\varphi \otimes d\varphi$, 
$ac - b^2 = \frac{1}{4}$. Assume:
\begin{align}\label{qualitative w22 conformal small}
\|h\|_{W^{2,2}} + \|H\|_{L^1(d\mu_f)} \ll 1 \quad \text{and} \quad \|u\|_{L^\infty} \le C.
\end{align}
Then for any $k \in C^\infty(\mathbb{S}^1\times \mathbb{S}^1)$:
\[
\int_{\mathbb{S}^1\times \mathbb{S}^1} (\nabla k \nabla z - (2+\psi)kz) \, d\theta d\varphi 
= O(\|k\|_{W^{1,2}} \cdot (\|H\|_{L^2(\mu_g)} + \|h\|_{W^{2,2}}^2)),
\]
where $z = h \cdot n$ and $\psi = e^{2u} - 1$.
\end{lemma}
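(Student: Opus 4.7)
The strategy is to pair the ambient mean curvature equation \(\Delta_{g_f}f+2f=H\) for \(f:\mathbb{S}^1\times\mathbb{S}^1\to\mathbb{S}^3\subset\mathbb{R}^4\) (which holds because the mean curvature of \(\mathbb{S}^3\subset\mathbb{R}^4\) is \(-2f\)) with the \emph{fixed} vector field \(kn\), where \(n=(-\phi,\psi)\) is the Clifford normal and does not depend on \(f\).  Since \(f\cdot n=(f_0+h)\cdot n=h\cdot n=z\) and \(d\mu_{g_f}=\tfrac{e^{2u}}{2}\,d\theta d\varphi\), integration by parts combined with the conformal invariance of the two-dimensional Dirichlet integral (which lets us replace \(g_f\) by \(g_\tau\) in the pairing) yields
\[
\int\langle\nabla_{g_\tau}f,\nabla_{g_\tau}(kn)\rangle\,d\mu_{g_\tau}
+\int H\cdot kn\,d\mu_{g_f}
= \int(1+\psi)kz\,d\theta d\varphi.
\]
The \(H\)-term on the left is already bounded by \(\|H\|_{L^2(d\mu_g)}\|k\|_{L^2}\) (using \(|n|=1\) and \(\|u\|_{L^\infty}\le C\)), so the problem reduces to expanding the Dirichlet pairing into \(\int\nabla z\cdot\nabla k\,d\theta d\varphi-\int kz\,d\theta d\varphi\) modulo admissible errors.

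To expand the pairing I would use \(\sqrt{\det g_\tau}=\tfrac12\), the closeness \(g_\tau^{ij}\approx 2\delta^{ij}\) (controlled by Proposition~\ref{quantitative complex structure and area}), the decomposition \(h=v+zn+wf_0\) with \(w=-\tfrac12|h|^2\) forced by \(|f|^2=1\), and the Clifford identities \(\partial_\theta n=-f_{0\theta}\), \(\partial_\varphi n=f_{0\varphi}\), \(f_0\cdot\partial_i n=n\cdot\partial_i n=0\), \(|f_{0\theta}|^2=|f_{0\varphi}|^2=\tfrac12\), together with the derived relations \(h\cdot\partial_\theta n=-\tfrac12 v^1\) and \(h\cdot\partial_\varphi n=\tfrac12 v^2\).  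Writing \(\partial_j(kn)=(\partial_j k)n+k\partial_j n\) and \(\partial_i f=\partial_i f_0+\partial_i h\) splits the pairing into four pieces.  The principal Dirichlet piece \(\tfrac12\int g_\tau^{ij}\partial_i z\partial_j k\) equals \(\int\nabla z\cdot\nabla k\) up to an error controlled by the metric defect.  The correction piece \(-\tfrac12\int g_\tau^{ij}(h\cdot\partial_i n)\partial_j k\), after integration by parts and one application of the Cauchy-Riemann system \eqref{Cauchy Riemann equation of v}, equals \(-\int kz\,d\theta d\varphi\) plus quadratic errors.  The pure-Clifford piece \(\tfrac12\int k\,g_\tau^{ij}\partial_i f_0\cdot\partial_j n\) equals \((a-c)\int k\).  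The mixed piece \(\tfrac12\int k\,g_\tau^{ij}\partial_i h\cdot\partial_j n\) can be computed either via the decomposition of \(h\) (using a second application of \eqref{Cauchy Riemann equation of v}) or directly via the conformal relations \eqref{eq:a}--\eqref{eq:c}, which give \(f_{0\theta}\cdot h_\theta=\tfrac12(e^{2u}a-\tfrac12-|h_\theta|^2)\) and similarly in \(\varphi\); either route reduces it to \(\tfrac12\int[e^{2u}(c-a)+|h_\theta|^2-|h_\varphi|^2]k\,d\theta d\varphi\), again an error.

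Collecting the four pieces and rearranging, the error terms are absorbed by: Proposition~\ref{quantitative complex structure and area} (\(|a-\tfrac12|+|b|+|c-\tfrac12|\le C(\|H\|_{L^2}+\|h\|_{W^{2,2}}^2)\)), the Sobolev embedding \(W^{2,2}\hookrightarrow W^{1,4}\) in two dimensions applied to the quadratic remainders \(|h_\theta|^2-|h_\varphi|^2\) and \(h_\theta\cdot h_\varphi\) tested against \(\|k\|_{L^2}\), and the uniform bound \(\|u\|_{L^\infty}\le C\) to control \(\psi\).  This yields the claimed identity.  The main obstacle is the exact cancellation producing the coefficient \(-1\) in front of \(\int kz\,d\theta d\varphi\): it requires applying \eqref{Cauchy Riemann equation of v} in two distinct pieces of the expansion and exploiting the special algebraic structure of the Clifford torus, so that when combined with the right-hand side \(\int(1+\psi)kz\) the coefficient becomes precisely \((2+\psi)\)---a sign error or a missed quadratic contribution would spoil the stated identity.
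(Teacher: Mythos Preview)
Your proposal is correct and follows essentially the same approach as the paper: pair the mean curvature equation $\Delta_{g_f}f+2f=H$ with $kn$, use conformal invariance of the Dirichlet integral to pass from $g_f$ to $g_\tau$, and expand using the Clifford identities together with the Cauchy--Riemann system \eqref{Cauchy Riemann equation of v} and Proposition~\ref{quantitative complex structure and area}. The only difference is organizational---the paper isolates the $f_0$-contribution at the Laplacian level (via $(\Delta_{g_\tau}-\Delta_{g_0})f_0\cdot kn$) and then computes $\int[h_\theta\cdot(kn)_\theta+h_\varphi\cdot(kn)_\varphi]$ from the explicit component formulas for $h_\theta,h_\varphi$, while you split the gradient pairing by the product rule on $\partial_j(kn)$ and $\partial_i f$; both routes produce the same cancellations and the same error budget.
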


\begin{proof}
Using $H = \Delta_{g_f} f + 2f$ and $\|u\|_{L^\infty} \le C$:
\[
|I| := \left|\int (\Delta_{g_f} f + 2f) \cdot kn \, d\mu_{g_f}\right| 
\le \|H\|_{L^2(\mu_{g_f})} \|k\|_{L^2(\mu_{g_f})} 
\le C \|H\|_{L^2(\mu_{g_f})} \|k\|_{L^2(\mathbb{S}^1\times \mathbb{S}^1)}.
\]

Compute $I$ using conformal invariance ($d\mu_{g_\tau} = \frac{1}{2} d\theta d\varphi$):
\begin{align*}
I &= \int \Delta_{g_\tau} f \cdot kn \, d\mu_{g_\tau} + 2\int k h \cdot n e^{2u} \, d\mu_{g_\tau} \\
&= \underbrace{\int \Delta_{g_\tau}(f_0 + h) \cdot kn \, d\mu_{g_\tau}}_{\text{II}} + \int k z e^{2u} \, d\theta d\varphi.
\end{align*}

Let $g_0 = \frac{1}{2}(d\theta^2 + d\varphi^2)$, then $\Delta_{g_0} f_0 = -2f_0$ and  $\Delta_{g_0} f_0 \cdot kn = 0$. Thus:
\begin{align*}
\text{II} &= \int (\Delta_{g_\tau} - \Delta_{g_0}) f_0 \cdot kn \, d\mu_{g_\tau} - \int \nabla_{g_\tau} h \cdot \nabla_{g_\tau}(kn) \, d\mu_{g_\tau} \\
&= \underbrace{\int (g_\tau^{ij} - 2\delta^{ij}) f_{0,ij} \cdot kn \, d\mu_{g_\tau}}_{\text{III}} 
- \underbrace{\int g_\tau^{ij} h_i \cdot (kn)_j \, d\mu_{g_\tau}}_{\text{IV}}.
\end{align*}

Using $f_{0,\theta\theta} = -\frac{f_0 - n}{2}$, $f_{0,\varphi\varphi} = -\frac{f_0 + n}{2}$, 
$f_{0,\theta\varphi} = 0$, and $g_\tau^{\theta\theta} = 4c$, $g_\tau^{\varphi\varphi} = 4a$, $g_\tau^{\theta\varphi} = -4b$:
\begin{align*}
\text{III} &= \int [(4c-2)f_{0,\theta\theta} \cdot kn + (4a-2)f_{0,\varphi\varphi} \cdot kn] \, d\mu_{g_\tau} \\
&= \int \left[(4c-2)\cdot \frac{k}{2} + (4a-2)\cdot \left(-\frac{k}{2}\right)\right] \frac{1}{2} d\theta d\varphi \\
&= (c - a) \int k \, d\theta d\varphi.
\end{align*}
By Proposition \ref{quantitative complex structure and area}:
\[
|\text{III}| = O(\|k\|_{L^1} \cdot (\|H\|_{L^1(\mu_g)} + \|h\|_{W^{1,2}}^2)).
\]

Split IV:
\begin{align*}
\text{IV} &= \int (g_\tau^{ij} - 2\delta^{ij}) h_i \cdot (kn)_j \, d\mu_{g_\tau} 
+ \int [h_\theta \cdot (kn)_\theta + h_\varphi \cdot (kn)_\varphi] \, d\theta d\varphi \\
&= O(\|k\|_{W^{1,2}} \cdot \|dh\|_{L^2} \cdot (\|H\|_{L^1} + \|h\|_{W^{1,2}}^2)) + \text{V},
\end{align*}
where 
\[
\text{V} = \int [h_\theta \cdot (kn)_\theta + h_\varphi \cdot (kn)_\varphi] \, d\theta d\varphi.
\]

Using the expressions for $h_\theta$, $h_\varphi$ and $(kn)_\theta = k_\theta n - k f_{0\theta}$, $(kn)_\varphi = k_\varphi n + k f_{0\varphi}$:
\begin{align*}
h_\theta \cdot (kn)_\theta &= k_\theta z_\theta + \frac{k(z-w)}{2} + \frac{(kv^1)_\theta}{2} - k v^1_\theta, \\
h_\varphi \cdot (kn)_\varphi &= k_\varphi z_\varphi + \frac{k(z+w)}{2} - \frac{(kv^2)_\varphi}{2} + k v^2_\varphi.
\end{align*}

Summing and using \eqref{Cauchy Riemann equation of v}:
\begin{align*}
&h_\theta \cdot (kn)_\theta + h_\varphi \cdot (kn)_\varphi \\
&= \nabla k \nabla z + kz + \frac{(kv^1)_\theta - (kv^2)_\varphi}{2} + k(v^2_\varphi - v^1_\theta) \\
&= \nabla k \nabla z + kz + \frac{(kv^1)_\theta - (kv^2)_\varphi}{2} - k(e^{2u}(a-c) + 2z - |h_\theta|^2 + |h_\varphi|^2) \\
&= \nabla k \nabla z - kz + \frac{(kv^1)_\theta - (kv^2)_\varphi}{2} - k(e^{2u}(a-c) - |h_\theta|^2 + |h_\varphi|^2).
\end{align*}

Thus:
\begin{align*}
\text{V} &= \int (\nabla k \nabla z - kz) \, d\theta d\varphi + (c-a) \int k e^{2u} \, d\theta d\varphi \\
&\quad + O(\|k\|_{L^2} \cdot \|dh\|_{L^4}^2).
\end{align*}

Combining III and IV:
\begin{align*}
\text{II} &= \text{III} - \text{IV} \\
&= (c-a) \int k \, d\theta d\varphi - \text{V} + O(\|k\|_{W^{1,2}} \cdot (\|H\|_{L^1} + \|h\|_{W^{1,2}}^2)) \\
&= -\int (\nabla k \nabla z - kz) \, d\theta d\varphi - (c-a) \int k(e^{2u} - 1) \, d\theta d\varphi \\
&\quad + O(\|k\|_{W^{1,2}} \cdot (\|H\|_{L^1} + \|h\|_{W^{2,2}}^2)).
\end{align*}

Finally, since $I = \text{II} + \int k z e^{2u} \, d\theta d\varphi$:
\begin{align*}
\int (\nabla k \nabla z - (2+\psi)kz) \, d\theta d\varphi 
= O(\|k\|_{W^{1,2}} \cdot (\|H\|_{L^2(\mu_g)} + \|h\|_{W^{2,2}}^2)).
\end{align*}
\end{proof}

From Lemma \ref{z equation}, we obtain:
\begin{align*}
\|\Delta z + (2+\psi)z\|_{W^{-1,2}} \le C(\|H\|_{L^2(\mu_g)} + \|h\|_{W^{2,2}}^2).
\end{align*}

To handle the kernel of $\Delta+2$ on $(\mathbb{S}^1\times \mathbb{S}^1, d\theta^2 + d\varphi^2)$, consider:
\begin{align*}
I(R) = \int_{\mathbb{S}^1\times \mathbb{S}^1} |Rf - f_0|^2 \, d\theta d\varphi, \quad R \in SO(4).
\end{align*}
Since $I(R)$ is continuous on the compact manifold $SO(4)$, there exists $R_0 \in SO(4)$ minimizing $I(R)$:
\begin{align*}
I(R_0) \le I(R) \quad \forall R \in SO(4).
\end{align*}
In particular, $I(R_0) \le \int |f - f_0|^2$, so:
\begin{align*}
\|(R_0 - id)f_0\|_{L^2} &\le \|R_0(f - f_0)\|_{L^2} + \|R_0f - f_0\|_{L^2} \\
&\le 2\|f - f_0\|_{L^2} \ll 1.
\end{align*}
Since $\Delta((R_0 - id)f_0) = -2(R_0 - id)f_0$, we have:
\begin{align*}
\|(R_0 - id)f_0\|_{W^{2,2}} \le C\|f - f_0\|_{L^2} \ll 1.
\end{align*}
Therefore:
\begin{align*}
\|R_0f - f_0\|_{W^{2,2}} &\le \|R_0(f - f_0)\|_{W^{2,2}} + \|(R_0 - id)f_0\|_{W^{2,2}} \\
&\le C\|f - f_0\|_{W^{2,2}},
\end{align*}
and $d(R_0f) \otimes d(R_0f) = df \otimes df = e^{2u}g_\tau$. Replacing $f$ by $R_0f$, we may assume:
\begin{equation}\label{Modulo rotation}
\int_{\mathbb{S}^1\times \mathbb{S}^1} |f - f_0|^2 \le \int_{\mathbb{S}^1\times \mathbb{S}^1} |Rf - f_0|^2 \quad \forall R \in SO(4).
\end{equation}

By Theorem~\ref{almost minimizing}, after a suitable conformal normalization
the surface $\Sigma$ satisfies
\[
\int_{\Sigma}|H|^{2}\le C\delta^{2},
\qquad 
\mathrm{Area}(\Sigma)\ge 2\pi^{2}-C\delta^{2},
\]
so its mean curvature is small and, by the qualitative stability results of
Section~2, the surface is already known to lie close to the Clifford torus.
Thus the remaining task is to upgrade this qualitative closeness, together
with the smallness of the mean curvature, to a fully quantitative estimate.
The next theorem provides precisely this, completing the proof of
Theorem~\ref{Thm: main}.

\begin{theorem}\label{W22 quantitative}
Let $f: \mathbb{S}^1\times \mathbb{S}^1 \to \mathbb{S}^3$ be an immersion with $g_f = e^{2u}g_\tau$ where 
$g_\tau = a d\theta^2 + 2b d\theta d\varphi + c d\varphi^2$, $ac - b^2 = \frac{1}{4}$.
Assume \eqref{Modulo rotation} holds and:
\begin{align*}
\|f - f_0\|_{W^{2,2}} + \|u\|_{L^\infty} + \|H\|_{L^1(d\mu_f)} \ll 1.
\end{align*}
Then:
\begin{align*}
\|f - f_0\|_{W^{2,2}} + |b| + |a - \tfrac{1}{2}| + |c - \tfrac{1}{2}| + |\mathrm{Area}(\Sigma) - 2\pi^2| \le C\|H\|_{L^2(d\mu_f)}.
\end{align*}
\end{theorem}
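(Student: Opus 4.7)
Writing $h:=f-f_0$ and decomposing $h=v+zn+wf_0$ with $w=-\tfrac12|h|^2$ (automatically quadratic), by Proposition~\ref{quantitative complex structure and area} the bounds on $|b|,|a-\tfrac12|,|c-\tfrac12|$ and on $\mathrm{Area}-2\pi^2$ follow once $\|h\|_{W^{1,2}}$ is controlled by $\|H\|_{L^2}$. So the plan is to establish the self-consistent estimate
\[
\|h\|_{W^{2,2}}\;\le\;C\bigl(\|H\|_{L^{2}}+\|h\|_{W^{2,2}}^{2}\bigr),
\]
after which the quadratic term on the right is absorbed by the qualitative smallness $\|h\|_{W^{2,2}}\ll1$. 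Throughout let $\epsilon:=\|H\|_{L^{2}}+\|h\|_{W^{2,2}}^{2}$ denote the target error scale.

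\textbf{The estimate on $z$.} Lemma~\ref{z equation} gives the weak equation $(\Delta+2+\psi)z=O_{W^{-1,2}}(\epsilon)$ with $\psi=e^{2u}-1$ small in $L^\infty$, where $\Delta=\partial_\theta^2+\partial_\varphi^2$. The kernel $K$ of $\Delta+2$ on $(\mathbb{T}^2,d\theta^2+d\varphi^2)$ is spanned by the four Jacobi functions $k_j\in\{\cos\theta\cos\varphi,\cos\theta\sin\varphi,\sin\theta\cos\varphi,\sin\theta\sin\varphi\}$. Splitting $z=z_K+z_\perp$ in $L^2$, Fredholm theory for the perturbed operator (using $\|\psi\|_{L^\infty}\ll1$) yields $\|z_\perp\|_{W^{1,2}}\le C(\epsilon+\|u\|_{L^\infty}\|z\|_{L^2})$, where the second term is later absorbed. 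To bound $z_K$ I exploit the gauge condition~\eqref{Modulo rotation}: its first variation at $R_0=I$ reads $\int (Xf_0)\cdot h=0$ for every skew $X\in\mathfrak{so}(4)$, which combined with $Xf_0\cdot f_0=0$ reduces to $\int (Xf_0\cdot n)z+\int (Xf_0)^{\tan}\cdot v=0$. For each of the four tilt generators $E_{ij}$ ($i\in\{1,2\},\,j\in\{3,4\}$) one computes $E_{ij}f_0\cdot n=k$ where $k$ ranges through the kernel basis, and the tangential part $(E_{ij}f_0)^{\tan}=\tfrac12(Y^1 f_{0\theta}+Y^2 f_{0\varphi})$ satisfies exactly $(Y^1,Y^2)=D^{*}(k,0)$, where $D(v^1,v^2)=(v^1_\theta-v^2_\varphi,\,v^1_\varphi+v^2_\theta)$ is the Cauchy--Riemann operator of~\eqref{Cauchy Riemann equation of v}. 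Integrating by parts on the torus,
\[
\int v\cdot(E_{ij}f_0)^{\tan}=\tfrac12\int D(v)\cdot(k,0)=\int zk+\tfrac12\int s_1 k,
\]
where $s_1=e^{2u}(a-c)-|h_\theta|^2+|h_\varphi|^2$ has $\|s_1\|_{L^2}\le C\epsilon$ by Proposition~\ref{quantitative complex structure and area} together with $W^{2,2}\hookrightarrow W^{1,4}$. Substituting back yields $\int zk=-\tfrac14\int s_1 k=O(\epsilon)$ for each $k\in K$, so $\|z_K\|\le C\epsilon$; combined with the bound on $z_\perp$ this gives $\|z\|_{W^{1,2}}\le C\epsilon$.

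\textbf{Estimates on $v$, $w$, and the bootstrap.} With $z$ under control, the Cauchy--Riemann system~\eqref{Cauchy Riemann equation of v} reads $D(v)=(2z+s_1,s_2)$ with $\|s_2\|_{L^2}\le C\epsilon$, while the two translation generators $E_{12},E_{34}$ applied in~\eqref{Modulo rotation} force $\int v^1=\int v^2=0$, pinning down the constant kernel of $D$. Standard Cauchy--Riemann estimates on $\mathbb{T}^2$ give $\|v\|_{W^{1,2}}\le C\|D(v)\|_{L^2}\le C\epsilon$, and since $w=-\tfrac12|h|^2$ obeys $\|w\|_{W^{1,2}}\le C\|h\|_{W^{2,2}}^2\le C\epsilon$, one obtains $\|h\|_{W^{1,2}}\le C\epsilon$. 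To upgrade to $W^{2,2}$ I subtract the equations $\Delta_{g_f}f+2f=H$ and $\Delta_{g_0}f_0=-2f_0$ to obtain
\[
\Delta_{g_\tau}h+2e^{2u}h\;=\;e^{2u}H-(\Delta_{g_\tau}-\Delta_{g_0})f_0-2(e^{2u}-1)f_0.
\]
Using the tilt bound from Proposition~\ref{quantitative complex structure and area} together with the pointwise identity $e^{2u}(a+c)-1=v^1_\theta+v^2_\varphi+2w+|dh|^2$ (which bounds $\|e^{2u}-1\|_{L^2}$ by $C\epsilon$), each term on the right-hand side has $L^2$-norm $\le C\epsilon$. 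Since the operator is uniformly elliptic thanks to $\|u\|_{L^\infty}\ll1$, elliptic regularity on $\mathbb{T}^2$ yields $\|h\|_{W^{2,2}}\le C\epsilon=C(\|H\|_{L^2}+\|h\|_{W^{2,2}}^2)$, and the absorption step completes the proof.

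\textbf{Main obstacle.} The delicate point is avoiding circularity: naively the gauge condition only gives $|P_K z|\lesssim\|v\|_{L^2}$, while Cauchy--Riemann only gives $\|v\|_{L^2}\lesssim\|z\|_{L^2}$, a loop that does not close. The key structural fact that breaks the circularity is the exact identification $(E_{ij}f_0)^{\tan}$ with $D^{*}(k_{ij},0)/2$, a manifestation of the adjoint pairing between the four tilt deformations of the Clifford torus in $SO(4)/T^2$ and the four normal Jacobi modes spanning $\ker(\Delta+2)$. Precisely this matching converts the rotation condition, through integration by parts, into a purely \emph{quadratic} estimate on $\int z k$, so that the estimates on $z$ and $v$ can be decoupled and closed.
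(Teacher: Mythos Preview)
Your proposal is correct and follows essentially the same approach as the paper: both use the balance condition \eqref{Modulo rotation} with the tilt generators of $\mathfrak{so}(4)$ to control the kernel projection $P_{\mathcal N}z$, substitute the Cauchy--Riemann relation \eqref{Cauchy Riemann equation of v} to convert the resulting integrals into purely quadratic errors, and then bootstrap via the mean curvature equation. The only difference is cosmetic: you organize the key cancellation as the adjoint identity $(E_{ij}f_0)^{\tan}=D^*(k_{ij},0)$, whereas the paper carries out the same integration by parts in explicit coordinates; the resulting identity $4\int zk=-\int s_1 k$ and the subsequent closing argument are identical.
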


\begin{proof}
Differentiating \eqref{Modulo rotation} at $t = 0$ for $A \in \mathfrak{so}(4)$:
\begin{align*}
0 = \frac{d}{dt}\bigg|_{t=0} \int |e^{tA}f - f_0|^2 = -2\int Af \cdot f_0.
\end{align*}
Since $f = f_0 + h = v + zn + (1 + w)f_0$ and $Af_0 \cdot f_0 = 0$:
\begin{equation}\label{balance condition}
\int (v \cdot Af_0 + z n \cdot Af_0) \, d\theta d\varphi = -\int A(v + zn) \cdot f_0 = 0.
\end{equation}

Let $\{\tau_1, \tau_2, \tau_3, \tau_4\}$ be the standard basis of $\mathbb{R}^4$. For $1 \le i < j \le 4$, define $A_{ij} \in \mathfrak{so}(4)$ by:
\begin{align*}
A_{ij}(\tau_i) = \tau_j, \quad A_{ij}(\tau_j) = -\tau_i, \quad A_{ij}(\tau_k) = 0 \ (k \ne i,j).
\end{align*}
Compute:
\begin{align*}
A_{12}f_0 &= \sqrt{2}f_{0\theta}, \quad &A_{34}f_0 &= \sqrt{2}f_{0\varphi}, \\
A_{13}f_0 &= (-\cos\varphi, 0, \cos\theta, 0), \quad &A_{14}f_0 &= (-\sin\varphi, 0, 0, \cos\theta), \\
A_{23}f_0 &= (0, -\cos\varphi, \sin\theta, 0), \quad &A_{24}f_0 &= (0, -\sin\varphi, 0, \sin\theta).
\end{align*}

Write $v + zn = v^1f_{0\theta} + v^2f_{0\varphi} + zn$ in coordinates:
\begin{align*}
v + zn = \frac{1}{\sqrt{2}}(-z\cos\theta &- v^1\sin\theta,\ -z\sin\theta + v^1\cos\theta, \\
&z\cos\varphi - v^2\sin\varphi,\ z\sin\varphi + v^2\cos\varphi).
\end{align*}
Then:
\begin{align*}
(v + zn) \cdot A_{12}f_0 &= \tfrac{\sqrt{2}}{2}v^1, \\
(v + zn) \cdot A_{34}f_0 &= \tfrac{\sqrt{2}}{2}v^2, \\
(v + zn) \cdot A_{13}f_0 &= \tfrac{1}{\sqrt{2}}(2z\cos\theta\cos\varphi + v^1\sin\theta\cos\varphi - v^2\cos\theta\sin\varphi), \\
(v + zn) \cdot A_{14}f_0 &= \tfrac{1}{\sqrt{2}}(2z\cos\theta\sin\varphi + v^1\sin\theta\sin\varphi + v^2\cos\theta\cos\varphi), \\
(v + zn) \cdot A_{23}f_0 &= \tfrac{1}{\sqrt{2}}(2z\sin\theta\cos\varphi - v^1\cos\theta\cos\varphi - v^2\sin\theta\sin\varphi), \\
(v + zn) \cdot A_{24}f_0 &= \tfrac{1}{\sqrt{2}}(2z\sin\theta\sin\varphi - v^1\cos\theta\sin\varphi + v^2\sin\theta\cos\varphi).
\end{align*}

From \eqref{balance condition} with $A_{12}$ and $A_{34}$:
\begin{equation}\label{intrinsic rotation balance}
\int v^1 \, d\theta d\varphi = \int v^2 \, d\theta d\varphi = 0.
\end{equation}
For $A_{13}, A_{14}, A_{23}, A_{24}$, we obtain:
\begin{align*}
2\int z\cos\theta\cos\varphi &= \int (v^2_\varphi - v^1_\theta)\cos\theta\cos\varphi, \\
2\int z\cos\theta\sin\varphi &= \int (v^2_\varphi - v^1_\theta)\cos\theta\sin\varphi, \\
2\int z\sin\theta\cos\varphi &= \int (v^2_\varphi - v^1_\theta)\sin\theta\cos\varphi, \\
2\int z\sin\theta\sin\varphi &= \int (v^2_\varphi - v^1_\theta)\sin\theta\sin\varphi.
\end{align*}

Substitute from \eqref{Cauchy Riemann equation of v}:
\begin{align*}
v^2_\varphi - v^1_\theta = -e^{2u}(a-c) - 2z + |h_\theta|^2 - |h_\varphi|^2.
\end{align*}
Thus:
\begin{align*}
4\int z\cos\theta\cos\varphi &= \int [|h_\theta|^2 - |h_\varphi|^2 - (a-c)e^{2u}]\cos\theta\cos\varphi \\
&= O(\|h\|_{W^{1,2}}^2 + \|H\|_{L^1}), \\
4\int z\cos\theta\sin\varphi &= O(\|h\|_{W^{1,2}}^2 + \|H\|_{L^1}), \\
4\int z\sin\theta\cos\varphi &= O(\|h\|_{W^{1,2}}^2 + \|H\|_{L^1}), \\
4\int z\sin\theta\sin\varphi &= O(\|h\|_{W^{1,2}}^2 + \|H\|_{L^1}).
\end{align*}

The spectrum of $-\Delta$ on $(\mathbb{S}^1\times \mathbb{S}^1, d\theta^2 + d\varphi^2)$ is $\{m^2 + n^2 : m,n \in \mathbb{Z}_{\ge 0}\}$, and:
\[
\mathcal{N} := \mathrm{Ker}(\Delta + 2) = \mathrm{span}\{\cos\theta\cos\varphi, \cos\theta\sin\varphi, \sin\theta\cos\varphi, \sin\theta\sin\varphi\}.
\]
Let $P_{\mathcal{N}}: L^2(\mathbb{S}^1\times \mathbb{S}^1) \to \mathcal{N}$ be the orthogonal projection. Then:
\begin{align*}
\|P_{\mathcal{N}}(z)\|_{L^2} \le C(\|h\|_{W^{1,2}}^2 + \|H\|_{L^1}).
\end{align*}

For $z^\bot = z - P_{\mathcal{N}}(z)$, we have $(\Delta + 2)z^\bot = (\Delta + 2)z$. By Lemma \ref{z equation}:
\begin{align*}
\|(\Delta + 2)z^\bot\|_{W^{-1,2}} \le C(\|h\|_{W^{2,2}}^2 + \|H\|_{L^2} + \|\psi\|_{L^\infty} \|z\|_{L^2}).
\end{align*}
Since $\Delta + 2: W^{1,2} \cap \mathcal{N}^\bot \to W^{-1,2}$ is invertible:
\begin{align*}
\|z^\bot\|_{W^{1,2}} \le C(\|h\|_{W^{2,2}}^2 + \|H\|_{L^2} + \|\psi\|_{L^\infty} \|z\|_{L^2}).
\end{align*}
As $\dim\mathcal{N} = 4 < \infty$:
\begin{align*}
\|P_{\mathcal{N}}(z)\|_{W^{1,2}} \le C\|P_{\mathcal{N}}(z)\|_{L^2} \le C(\|H\|_{L^1} + \|h\|_{W^{1,2}}^2).
\end{align*}
Therefore:
\begin{align*}
\|z\|_{W^{1,2}} &\le \|P_{\mathcal{N}}(z)\|_{W^{1,2}} + \|z^\bot\|_{W^{1,2}} \\
&\le C(\|h\|_{W^{2,2}}^2 + \|H\|_{L^2} + \|\psi\|_{L^\infty} \|z\|_{L^2}).
\end{align*}
Since $\|u\|_{L^\infty} \ll 1$ implies $\|\psi\|_{L^\infty} \ll 1$:
\begin{align*}
\|z\|_{W^{1,2}} \le C(\|h\|_{W^{2,2}}^2 + \|H\|_{L^2}). 
\end{align*}

From \eqref{Cauchy Riemann equation of v}, \eqref{intrinsic rotation balance}, and Proposition \ref{quantitative complex structure and area}:
\begin{align*}
\|v\|_{W^{1,2}} &\le C(\|z\|_{L^2} + \|dh\|_{L^4}^2 + |a-c| + |b|) \\
&\le C(\|H\|_{L^2} + \|h\|_{W^{2,2}}^2). \tag{4}
\end{align*}

Since $w = -\tfrac{1}{2}|h|^2$ is higher order, we obtain
\begin{align}\label{W12 estimate with W22 square error}
\|h\|_{W^{1,2}} \le \|v\|_{W^{1,2}} + \|z\|_{W^{1,2}} + \|w\|_{W^{1,2}} \le C(\|H\|_{L^2} + \|h\|_{W^{2,2}}^2). 
\end{align}

The mean curvature equation gives
\begin{align*}
H e^{2u} = \Delta_{g_\tau}(f_0 + h) + 2(f_0 + h) = (\Delta_{g_\tau} - \Delta_{g_0})(f_0 + h) + \Delta_{g_0} h + 2h.
\end{align*}
Since $\|u\|_{L^\infty} \le C$:
\begin{align*}
\|h\|_{W^{2,2}} &\le C(\|h\|_{L^2} + \|\Delta h\|_{L^2}) \\
&\le C(\|h\|_{L^2} + \|H\|_{L^2} + (|a-\tfrac{1}{2}| + |c-\tfrac{1}{2}| + |b|)(1 + \|h\|_{W^{2,2}})).
\end{align*}
By Proposition \ref{quantitative complex structure and area}:
\begin{align*}
\|h\|_{W^{2,2}} \le C(\|h\|_{L^2} + \|H\|_{L^2} + \|H\|_{L^1} + \|dh\|_{L^2}^2),
\end{align*}
so:
\begin{align}\label{W22 dominated by w12}
\|h\|_{W^{2,2}} \le C(\|h\|_{W^{1,2}} + \|H\|_{L^2}). 
\end{align}

Substitute \eqref{W12 estimate with W22 square error} into \eqref{W22 dominated by w12}:
\begin{align*}
\|h\|_{W^{2,2}} \le C(\|h\|_{W^{2,2}}^2 + \|H\|_{L^2}),
\end{align*}
which implies (since $\|h\|_{W^{2,2}} \ll 1$):
\begin{align*}
\|h\|_{W^{2,2}} \le C\|H\|_{L^2}.
\end{align*}

Finally, by Proposition \ref{quantitative complex structure and area}:
\begin{align*}
|b| + |a - \tfrac{1}{2}| + |c - \tfrac{1}{2}| + |\mathrm{Area}(\Sigma) - 2\pi^2| \le C\|H\|_{L^2(d\mu_f)}.
\end{align*}

We now estimate the conformal factor \(u\).  
Let \(f_1:=f_\theta,f_2:=f_\varphi\) denote the two coordinate derivatives of \(f\), and set
\[
e_i=\sqrt{2}f_ie^{-u}, \qquad i=1,2.
\]
Since the metric \(g_f\) has constant coefficients in this coordinate basis, an orthonormal frame \(\{E_1,E_2\}\) for \(g_f\) can be chosen as a constant linear combination of \(\{e_1,e_2\}\).  
As any vector-valued function satisfies \(dv\wedge dv=0\), this implies
\[
\ast(dE_1\wedge dE_2)=\frac{1}{2}\ast(de_1\wedge de_2)/(ac-b^2)^{\frac{1}{2}}=\ast (de_1\wedge de_2).
\]

Because \(g_f=e^{2u}g_\tau\), the Gauss equation gives
\[
-\Delta_{g_\tau}u=\ast(de_1\wedge de_2).
\]

For the reference map \(f_0\), define similarly
\[
e_i^0=\frac{(f_0)_i}{|(f_0)_i|}=\sqrt{2}(f_0)_i,
\qquad\text{and note that }de_1^0\wedge de_2^0=0.
\]
Subtracting the corresponding identities for \(f\) and \(f_0\), we obtain
\[
-\Delta_{g_\tau}u
=\ast\bigl(d(e_1-e_1^0)\wedge de_2\bigr)
  +\ast\bigl(de_1^0\wedge d(e_2-e_2^0)\bigr).
\]

Since \(|f_i|= e^u\) and \(|u|\ll1\), we have the uniform lower bound \(|f_i|\ge c>0\).  
Hence,
\[
\begin{aligned}
|d(e_i-e_i^0)|
&=\sqrt{2}\left|\nabla\!\left(f_ie^{-u}-(f_0)_i\right)\right| \\
&\le C|\nabla(f_i-(f_0)_i)| +C(|a-1/2|+|c-1/2|+|b|)|\nabla (f_0)_i|\\
&\le C|D^2h|+C\|H\|_{L^2(d\mu_f)},
\end{aligned}
\]
where \(h=f-f_0\).  
Consequently,
\[
\bigl\|
\ast(d(e_1-e_1^0)\wedge de_2)
+\ast(de_1^0\wedge d(e_2-e_2^0))
\bigr\|_{\mathcal{H}^1}
\le C\|h\|_{W^{2,2}}+C\|H\|_{L^2(d\mu_f)}.
\]

Therefore
\[
\|\Delta_{g_\tau}u\|_{\mathcal{H}^1} \le C\|h\|_{W^{2,2}}+C\|H\|_{L^2(d\mu_f)}.
\]
Applying the elliptic estimate for \(\Delta_{g_\tau}\) with Hardy space data  \cite{CLMS-1993,MS-1995} yields
\[
\|u\|_{L^\infty}
+\|u\|_{W^{1,2}}
+\|u\|_{W^{2,1}}
\le C\|h\|_{W^{2,2}}+C\|H\|_{L^2(d\mu_f)}.
\]

Finally, since \(\|h\|_{W^{2,2}} \le C\|H\|_{L^2(d\mu_f)}\), we conclude
\[
\|u\|_{L^\infty}
+\|u\|_{W^{1,2}}
+\|u\|_{W^{2,1}}
\le C\|H\|_{L^2(d\mu_f)}.
\]

\end{proof}

\appendix

\section{Proof of Proposition~\ref{away from boundary of conformal group}}

In this appendix we provide the proof of Proposition~\ref{away from boundary of conformal group}.  
By the $2\pi^2$-Theorem of Marques and Neves (Theorem~\ref{2pi2-theorem}), it is enough to obtain a uniform area bound for the canonical family when the conformal parameter is close to the boundary of the unit ball.  
More precisely, Proposition~\ref{away from boundary of conformal group} will follow once we prove the following result.

\begin{theorem}[Uniform area estimate near the boundary]\label{uniformly area estimate}
There exist constants $\gamma_1>0$ and $\eta_1>0$ with the following property.  
Let $\Sigma\subset \mathbb{S}^3$ be a surface satisfying the standing hypotheses of Section~3 (in particular, $\Sigma$ is $(\gamma,r_0)$-regular with $\gamma\le\gamma_1$ and has positive genus), and assume in addition that
\[
\mathcal{W}(\Sigma)\le 8\pi.
\]
Then, for all $(v,t)\in \mathring{B}^4\times[-\pi,\pi]$ with
\[
|v|\ge 1-\eta_1,
\]
one has
\[
\mathcal{H}^2(\Sigma_{(v,t)}) \le 5\pi.
\]
\end{theorem}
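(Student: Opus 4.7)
The plan is to follow the four-step geometric picture of the Introduction and make each step quantitative, using only the $(\gamma,r_{0})$-regularity of $\Sigma$, the conformal invariance $\mathcal{W}(\Sigma_{v})=\mathcal{W}(\Sigma)\le 8\pi$, and the explicit formula for the M\"obius automorphism $F_{v}$.

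First, I would analyse the limiting behaviour of $F_{v}$ as $|v|\to 1$. The map contracts $\mathbb{S}^{3}\setminus B_{\rho}(v/|v|)$ into an arbitrarily small neighbourhood of the antipodal point $-v/|v|$ for every fixed $\rho>0$, while blowing up the small spherical cap $B_{\rho}(v/|v|)\cap\mathbb{S}^{3}$. Using the $(\gamma,r_{0})$-regularity to write $\Sigma\cap B_{\rho}(v/|v|)$ as a controlled union of approximately planar disks, and choosing $\rho$ tailored to $\eta_{1}$, I would deduce that $\Sigma_{v}$ is Hausdorff close to a geodesic sphere $S$, with the region it bounds close to the ball bounded by $S$. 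A direct comparison, together with the $(\gamma,r_{0}')$-regularity inherited by $\Sigma_{v}$ (the new scale $r_{0}'$ being obtained from $r_{0}$ after a controlled adjustment), then gives
\[
\mathrm{Area}(\Sigma_{v})\;\le\;4\pi+\omega(\eta_{1}),\qquad \omega(\eta_{1})\to 0 \text{ as } \eta_{1}\to 0.
\]

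Next, for $|t|\le t_{1}$ with $t_{1}=t_{1}(\eta_{1})$ sufficiently small, a first-order expansion of the Heintze-Karcher Jacobian $J_{\varphi_{t}}$ recalled in the proof of Theorem~\ref{almost minimizing} yields
\[
\bigl|\mathrm{Area}(\Sigma_{(v,t)})-\mathrm{Area}(\Sigma_{v})\bigr|\;\le\; C|t|\,\mathcal{W}(\Sigma_{v})\;\le\; 8\pi C|t|,
\]
so $\mathrm{Area}(\Sigma_{(v,t)})<5\pi$ in this range. For $|t|\ge t_{1}$ I split into two subcases according to whether $\Sigma_{v}$ stays a definite distance from both poles of the natural stereographic chart, or clusters close to one of them. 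In the first subcase the Euclidean parallel-set model applies: a region trapped between two planes of separation $\varepsilon\ll 1$ has $t$-parallel boundary which is a Lipschitz graph over one bounding plane with Lipschitz constant $O(\varepsilon/t_{1})\to 0$; transferring this via the stereographic chart, $\Sigma_{(v,t)}$ becomes a small Lipschitz graph over a geodesic sphere and hence has area below $4\pi+o(1)$. In the second subcase $\Sigma_{v}$ has very small diameter, so $\Sigma_{(v,t)}$ is contained in a thin tubular neighbourhood of the geodesic sphere of radius $|t|$ centred at that pole, again of area below $5\pi$. Combining the three regimes completes the proof.

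The main obstacle will be the first step: quantifying the rate at which $\Sigma_{v}$ collapses onto a geodesic sphere as $|v|\to 1$ using only $(\gamma,r_{0})$-regularity, and ensuring that the induced regularity constants for $\Sigma_{v}$ degrade in a controlled way as $\rho\to 0$. The Willmore bound $\mathcal{W}(\Sigma)\le 8\pi$ plays an auxiliary but essential role here, entering through the Li-Yau inequality (to rule out accumulation of higher-density points in $\Sigma_{v}$) and through the monotonicity formula (to cap the area contribution from $\Sigma\cap B_{\rho}(v/|v|)$ after the blow-up). Once Step~1 is secured, the two $|t|$-regimes reduce to standard tubular-neighbourhood computations, one in the Euclidean and one in the spherical setting.
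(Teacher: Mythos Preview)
Your overall strategy matches the paper's: first show that $\Sigma_v$ is Hausdorff close to a geodesic sphere $S$ with $\mathrm{Area}(\Sigma_v)\le 4\pi+o(1)$ (the paper's Lemma~\ref{conformal boundary}, carried out in stereographic coordinates with the monotonicity formula controlling the far region), then handle small $|t|$ by a direct Jacobian estimate giving $|\mathrm{Area}(\Sigma_{(v,t)})-\mathrm{Area}(\Sigma_v)|\le C|\sin t|\,\mathcal{W}(\Sigma)$ (Lemma~\ref{small t}), and finally treat large $|t|$ by comparing $\Sigma_{(v,t)}$ with the parallel spheres $S_t$. The first two steps are essentially as in the paper.

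The genuine gap is in your case split for $|t|\ge t_1$. You distinguish according to whether $\Sigma_v$ itself lies close to a pole, but this is the wrong dichotomy: even when $\Sigma_v$ is close to a great sphere (hence far from both poles), the parallel sphere $S_t$ collapses to a point as $t\to t_\pm$, and in that regime your Euclidean parallel-set model transferred through a stereographic chart cannot work---spherical and Euclidean signed distances do not correspond near the pole, so neither do the two families of parallel sets. The paper instead splits on whether $\mathrm{diam}(S_t)\ge d$ for a fixed small $d>0$. When this holds, an \emph{intrinsic} argument (every point of $\Sigma_{(v,t)}$ carries a supporting ball of radius $t\ge t_0$, and the spherical law of cosines then controls the angle between its normal and that of $S_t$; see Lemmas~\ref{support sphere and angle estimate} and~\ref{two point cone}) shows that $\Sigma_{(v,t)}$ is a $(1+\delta)$-Lipschitz graph over $S_{t-\varepsilon}$. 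When $\mathrm{diam}(S_t)<d$, the paper invokes an area-comparison inequality
\[
\mathcal{H}^2(\Sigma_{(v,t)})\le\Bigl(\frac{\sin t}{\sin\tau}\Bigr)^{2}\mathcal{H}^2(\Sigma_{(v,\tau)}),
\qquad 0<\tau<t,
\]
(Lemma~\ref{area comparison}, proved via the one-sided mean-curvature bound $\langle N,H\rangle\ge -2\cot\tau$ that the supporting ball forces on $\Sigma_{(v,\tau)}$) to reduce to a time $\tau$ with $\mathrm{diam}(S_\tau)\approx d/2$, already handled by the Lipschitz case. This comparison lemma is precisely what is missing from your second subcase: mere containment of $\Sigma_{(v,t)}$ in a thin tube around a small geodesic sphere does \emph{not} bound its area, since the surface could a priori fold arbitrarily many times inside the tube.
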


\begin{remark}
Neither the constant $8\pi$ in the hypothesis nor the constant $5\pi$ in the
conclusion is essential.  
The upper bound $8\pi$ may be replaced by any fixed finite constant (with the
corresponding $\eta_{1}$ depending on it), and the number $5\pi$ may be replaced
by any constant strictly larger than $4\pi$.  
The specific values are chosen only for convenience and to streamline the
presentation.
\end{remark}

The remainder of this appendix is devoted to the proof of Theorem~\ref{uniformly area estimate}, and hence of Proposition~\ref{away from boundary of conformal group}. For clarity, the proof is divided into three subsections.

 \subsection{Estimate the area for $|t|$ close to $0$ }

 \begin{lemma}\label{small t} 
There exists a constant $C>0$ such that for $\gamma\ll 1$, there exists $\eta(\gamma)>0$ such that for any $v\in \mathbb{R}^4$ with $1-|v|\le \eta(\gamma)$ and $t\in [-\pi,\pi]$, if $\Sigma\subset \mathbb{S}^3$ is an $(\gamma,r_0)$-regular torus,  then:
\begin{align*}
\mathrm{Area}(\Sigma_{(v,t)}) \le 4\pi(1+C\gamma^{\frac{1}{2}}) + 16\mathcal{W}(\Sigma) |\sin t|.
\end{align*}
\end{lemma}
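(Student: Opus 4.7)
The plan is to split Lemma~\ref{small t} into two independent estimates whose sum yields the claim: a slice bound at $t=0$,
\[
\mathrm{Area}(\Sigma_v) \le 4\pi(1+C\gamma^{1/2})
\]
(valid once $1-|v|\le\eta(\gamma)$), and a Jacobian estimate
\[
\mathrm{Area}(\Sigma_{(v,t)}) - \mathrm{Area}(\Sigma_v) \le 16\,\mathcal{W}(\Sigma)\,|\sin t|
\]
(valid for all admissible $v,t$). The first is the geometric heart; the second follows directly from the Heintze-Karcher-type Jacobian formula recalled in Section~3.

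For the slice bound I would work from the conformal identity
\[
\mathrm{Area}(\Sigma_v)
= \int_\Sigma |dF_v|^2\,d\mathcal{H}^2
= \int_\Sigma\frac{(1-|v|^2)^2}{|x-v|^4}\,d\mathcal{H}^2(x).
\]
With $R:=1-|v|$ and $\hat v:=v/|v|$, the identity $|x-v|^2 = (1-R)|x-\hat v|^2 + R^2$, valid for $x\in\mathbb{S}^3$, shows the integrand concentrates at scale $R$ near $\hat v$, while $(1-|v|^2)^2 = R^2(2-R)^2$. I would split $\Sigma = (\Sigma\cap B_{r_0/2}(\hat v))\cup(\Sigma\setminus B_{r_0/2}(\hat v))$. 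On the far piece, $|x-v|\gtrsim r_0$ yields a contribution $\lesssim R^2\,\mathcal{W}(\Sigma)/r_0^4$, negligible once $R$ is small. If $\hat v$ has Euclidean distance $> r_0/4$ from $\Sigma$ the near piece is empty; otherwise, at a nearest point $p'\in\Sigma$, $(\gamma,r_0)$-regularity exhibits $\Sigma\cap B_{r_0/2}(\hat v)$ as a single conformal disk whose Hausdorff distance from the tangent $2$-plane $T:=T_{p'}\Sigma$ is at most $\gamma r_0$. A polar-coordinate computation gives the model bound
\[
\int_T \frac{(1-|v|^2)^2}{|x-v|^4}\,d\mathcal{H}^2
= \frac{\pi(1-|v|^2)^2}{\mathrm{dist}(v,T)^2} \le \pi(2-R)^2 \le 4\pi,
\]
using the lower bound $\mathrm{dist}(v,T)\ge\mathrm{dist}(v,T_{p'}\mathbb{S}^3)=1-v\cdot p'\ge R$. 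Passing from $T$ back to $\Sigma$ requires a dyadic decomposition of the near piece into annuli at scales $2^{-k}r_0$, on each of which the scale-$2^{-k}r_0$ regular parametrization gives relative error $O(\gamma)$; summing across scales yields $4\pi(1+O(\gamma))$, and choosing $\eta(\gamma)$ so that $R^2/r_0^2\lesssim\gamma^{1/2}$ absorbs the far-field remainder into the claimed $C\gamma^{1/2}$ loss.

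For the Jacobian estimate I would use the formula from Section~3,
\[
J_{\varphi_t} - 1 = \tfrac{H^2}{4}\sin^2 t + H\cos t\sin t - \sin^2 t\,(1+|\mathring A|^2),
\]
together with $\mathrm{Area}(\Sigma_{(v,t)})\le\int_{\Sigma_v^*(t)} J_{\varphi_t}\,d\mathcal{H}^2\le\int_{\Sigma_v} J_{\varphi_t}\,d\mathcal{H}^2$ and $J_{\varphi_0}\equiv 1$. Integrating and invoking $\int(1+H^2/4)=\mathcal{W}(\Sigma)$, the Gauss-Bonnet identity $\int|\mathring A|^2=2\mathcal{W}(\Sigma)$ for a torus, and $\int|H|\le 2\mathcal{W}(\Sigma)$ by Cauchy-Schwarz, followed by $\sin^2 t\le|\sin t|$, yields the $|\sin t|$-linear bound with a constant comfortably below $16$.

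The main obstacle is the near-region analysis in the slice bound: $(\gamma,r_0)$-regularity provides the displacement bound $|\varphi(\tau)-\tau|\le\gamma r_0$ only \emph{uniformly} in $\tau$, so $\Sigma$ cannot be replaced by a single tangent plane near $\hat v$ without a loss comparable to the main term for $|\tau|\ll\gamma r_0$. Propagating scale-adapted parametrizations across dyadic scales, and showing that the scale-$r$ tangent planes differ from one another by an angle $O(\gamma)$ by virtue of the bilipschitz constraint, are what ultimately drive the final $C\gamma^{1/2}$ closeness; the remaining ingredients (far-field decay, Willmore algebra, Heintze-Karcher) are routine.
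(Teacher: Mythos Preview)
Your outline is sound and both halves lead to the claim, but your route differs from the paper's in each.

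For the $|\sin t|$ term, the paper does \emph{not} apply Heintze--Karcher on $\Sigma_v$; it writes $\Sigma_{(v,t)}\subset P_{v,t}(\Sigma)$ for the composite $P_{v,t}=\varphi_t\circ F_v$ and computes $J_{P_{v,t}}$ directly on the original $\Sigma$. This produces an extra cross term $|(x-v)^\perp|/|x-v|^2$ not present in your formula, which the paper controls via the monotonicity identity $\int_\Sigma|(x-v)^\perp/|x-v|^2|^2\le\mathcal W(\Sigma)$; Cauchy--Schwarz against $\sqrt{J_{P_{v,0}}}$ then yields the constant~$16$. Your approach through $J_{\varphi_t}$ on $\Sigma_v$ is shorter and gives a smaller constant, but the step $\int_{\Sigma_v^*(t)}J_{\varphi_t}\le\int_{\Sigma_v}J_{\varphi_t}$ is false as written: $J_{\varphi_t}$ may be negative on $\Sigma_v\setminus\Sigma_v^*(t)$. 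The fix is immediate---bound $\int_{\Sigma_v^*(t)}J_{\varphi_t}\le\mathrm{Area}(\Sigma_v)+\int_{\Sigma_v}(J_{\varphi_t}-1)_+$ and estimate the positive part exactly as you do.

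For the slice bound $\mathrm{Area}(\Sigma_v)\le4\pi(1+C\gamma^{1/2})$, the paper (in the forward--referenced Lemma~\ref{conformal boundary}) factors $F_v=G_v^{-1}\circ S_\lambda\circ G_v$ through the stereographic projection $G_v$ from $-\hat v$ and the dilation $S_\lambda$, $\lambda=(1+|v|)/(1-|v|)$. In the chart $H_v=T_{\hat v}\mathbb S^3$ the rescaled surface $\lambda\,G_v(\Sigma)$ near the origin lies in a \emph{single} $(2\gamma,r_2)$-regular chart image once $\eta(\gamma)$ is chosen small, so one scale-$r_2$ comparison with an affine plane $T$ suffices; the stereographic conformal factor $16/(4+|z|^2)^2$ integrates over $T$ to at most $4\pi$, and the error from replacing $\tilde\Sigma_v$ by $T$ is $C\gamma^{1/2}$ (the exponent arises from balancing the truncation radius $L\sim\gamma^{-1/4}$ against the displacement $\gamma L$, so that $\gamma L^2\sim\gamma^{1/2}$). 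This sidesteps precisely the multi-scale obstacle you flag. Your dyadic scheme is plausible and may even deliver $O(\gamma)$, but the bookkeeping---tangent-plane drift across scales, matching annuli centered at $p'$ versus at the scale-$r$ basepoints, and the fact that ``near piece empty'' requires $\mathrm{dist}(\hat v,\Sigma)>r_0/2$, not $r_0/4$---is exactly the delicate part; the stereographic reduction buys a one-line comparison in its place.
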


\begin{proof}
By definition, $\Sigma_{(v,t)} \subset P_{v,t}(\Sigma)$, where
$$P_{v,t}(x) = \cos t \left((1-|v|^2)\frac{x-v}{|x-v|^2}-v\right) + \sin t\left(N(x) - \frac{2\langle N(x), x-v\rangle}{|x-v|^2}(x-v)\right).$$
Thus
\begin{align}\label{area calculation}
\mathrm{Area}(\Sigma_{(v,t)}) \le \int_{\Sigma} J_{P_{v,t}}(x) d\mathcal{H}^2(x).
\end{align}
For $F_v(x) = (1-|v|^2)\frac{x-v}{|x-v|^2}-v$, we have
\begin{align*}
DF_v(x) = \frac{1-|v|^2}{|x-v|^2} Q_{x,v},
\end{align*}
where $Q_{x,v}$ is the reflection
$
Q_{x,v}(w) = w - 2\frac{\langle w, x-v\rangle}{|x-v|^2}(x-v).
$
For $w \in T_x\Sigma$, by  $\langle w, N(x)\rangle = 0$, we compute
\begin{align*}
DP_{v,t}(w) &= \cos t \frac{1-|v|^2}{|x-v|^2} Q_{x,v}(w) + \sin t\left(D_wN - 2\frac{\langle x-v, D_wN\rangle}{|x-v|^2}(x-v) - 2\frac{\langle x-v, N\rangle}{|x-v|^2}Q_{x,v}(w)\right) \\
&= \cos t \frac{1-|v|^2}{|x-v|^2} Q_{x,v}(w) + \sin t\left(Q_{x,v}(D_wN) - 2\frac{\langle x-v, N\rangle}{|x-v|^2}Q_{x,v}(w)\right).
\end{align*}
Take $w = e_i$ ($i=1,2$) to be principal directions with $D_{e_i}N = \kappa_i N$. Then,
\begin{align*}
DP_{v,t}(e_i) = \left(\cos t \frac{1-|v|^2}{|x-v|^2} + \sin t\left(\kappa_i - 2\frac{\langle x-v, N\rangle}{|x-v|^2}\right)\right) Q_{x,v}(e_i).
\end{align*}
The Jacobian satisfies
\begin{align*}
J_{P_{v,t}} &\le \underbrace{\left(\frac{1-|v|^2}{|x-v|^2}\right)^2}_{=J_{P_{v,0}}} + (\sin t)^2\left(|\kappa_1\kappa_2| + 2|\kappa_1+\kappa_2|\frac{|(x-v)^\bot|}{|x-v|^2} + 4\left(\frac{|(x-v)^{\bot}|}{|x-v|^2}\right)^2\right) \\
&\quad + |\sin t|\frac{1-|v|^2}{|x-v|^2}\left(|\kappa_1+\kappa_2| + 4\frac{|(x-v)^\bot|}{|x-v|^2}\right),
\end{align*}
where $(x-v)^\bot = \langle x-v, x\rangle x + \langle x-v, N\rangle N$ satisfies $|(x-v)^\bot| \ge |\langle x-v, N\rangle|$.

Let $\vec{H}$ and $\vec{A}$ be the mean curvature vector and second fundamental form of $\Sigma\subset \mathbb{R}^4$. Since $\vec{H} = HN + 2x$ and $A = \langle \vec{A}, N\rangle$, we have
\begin{align*}
|\kappa_1+\kappa_2| &= |H| = |\langle \vec{H}, N\rangle| \le |\vec{H}|, \\
|\vec{H}|^2 &= |H|^2 + 4, \\
|\kappa_1\kappa_2| &\le \frac{|\langle \vec{A}, N\rangle|^2}{2} \le \frac{|\vec{A}|^2}{2}.
\end{align*}
Therefore,
\begin{align}\label{Jacobian estimate}
J_{P_{v,t}} &\le J_{P_{v,0}} + (\sin t)^2\left(\frac{|\vec{A}|^2}{2} + 2|H|\left|\frac{(x-v)^\bot}{|x-v|^2}\right| + 4\left|\frac{(x-v)^\bot}{|x-v|^2}\right|^2\right) \\
&\quad + |\sin t|\sqrt{J_{P_{v,0}}}\left(|\vec{H}| + 4\left|\frac{(x-v)^\bot}{|x-v|^2}\right|\right).
\end{align}
For a torus $\Sigma$, we have
\begin{align}\label{second fundamental form calculation}
\int_{\Sigma} |\vec{A}|^2 d\mathcal{H}^2 = \int_{\Sigma} |\vec{H}|^2 d\mathcal{H}^2 = \int_{\Sigma} (|H|^2 + 4) d\mathcal{H}^2 = 4\mathcal{W}(\Sigma).
\end{align}
By the monotonicity identity and noting $v \notin \Sigma$, we know
\begin{align*}
\int_{\Sigma} |\vec{H}|^2 d\mathcal{H}^2 = 16\int_{\Sigma} \left|\frac{\vec{H}}{4} + \frac{(x-v)^\bot}{|x-v|^2}\right|^2 d\mathcal{H}^2,
\end{align*}
which implies
\begin{align}\label{normal term calculation}
\int_{\Sigma} \left|\frac{(x-v)^\bot}{|x-v|^2}\right|^2 d\mathcal{H}^2 \le \frac{1}{4}\int_{\Sigma} |\vec{H}|^2 d\mathcal{H}^2 = \mathcal{W}(\Sigma).
\end{align}

Substituting \eqref{Jacobian estimate} into \eqref{area calculation} and applying \eqref{second fundamental form calculation} and \eqref{normal term calculation}, we get
\begin{align*}
\mathrm{Area}(\Sigma_{(v,t)}) &\le \mathrm{Area}(\Sigma_{v}) + (\sin t)^2\left(6\mathcal{W}(\Sigma) + 2\int_{\Sigma} |H|\left|\frac{(x-v)^\bot}{|x-v|^2}\right| d\mathcal{H}^2\right) \\
&\quad + |\sin t| \int_{\Sigma} \sqrt{J_{P_{v,0}}}\left(|\vec{H}| + 4\left|\frac{(x-v)^\bot}{|x-v|^2}\right|\right) d\mathcal{H}^2.
\end{align*}

Using Cauchy-Schwarz and the estimates above, then
\begin{align*}
\mathrm{Area}(\Sigma_{(v,t)}) &\le \mathrm{Area}(\Sigma_{v}) + 10(\sin t)^2\mathcal{W}(\Sigma) + 6|\sin t| \sqrt{\mathrm{Area}(\Sigma_{v})\mathcal{W}(\Sigma)} \\
&\le \mathrm{Area}(\Sigma_{v}) + 16|\sin t|\mathcal{W}(\Sigma).
\end{align*}
 Finally, by the following  Lemma \ref{conformal boundary}, we know  $\mathrm{Area}(\Sigma_{v}) \le 4\pi(1 + C\gamma^{1/2})$ for $|v|\ge 1-\eta$, and the proof is complete.
\end{proof}

\begin{lemma}\label{conformal boundary} 
There exists a constant $C>0$ such that for any $\gamma>0$, there exists $\eta(\gamma)>0$ such that for any $v\in \mathbb{R}^4$ with $0<1-|v|\le \eta(\gamma)$, if $\Sigma\subset \mathbb{S}^3$ is $(\gamma,r_0)$-regular and $\mathcal{W}(\Sigma)\le 8\pi$, then there exists a geodesic sphere $S\subset \mathbb{S}^3$ such that 
\begin{align*}
\mathrm{Area}(\Sigma_v) \le 4\pi(1+C\gamma^{\frac{1}{2}}) \quad \text{and} \quad d_{\mathcal{H}}(\Sigma_v,S) \le C\gamma^{\frac{1}{4}}.
\end{align*}
\end{lemma}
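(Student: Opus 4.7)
The main idea is to work in stereographic coordinates from the pole $p^* := v/|v|$: letting $\pi:\mathbb{S}^3\setminus\{p^*\}\to\mathbb{R}^3$ be the projection, a direct computation shows $\pi\circ F_v\circ\pi^{-1}$ is multiplication by the scalar $\lambda := (1-|v|)/(1+|v|)$, which tends to $0$ as $|v|\to 1$. Setting $\tilde\Sigma := \pi(\Sigma\setminus\{p^*\})$, the problem becomes: show that the dilated set $\lambda\tilde\Sigma$ is Hausdorff close to either the origin or to a $2$-plane through the origin of $\mathbb{R}^3$ (these being the images under $\pi$ of $\{-p^*\}$ and of the great $2$-spheres of $\mathbb{S}^3$ through $\pm p^*$), and that its $\mathbb{S}^3$-area is at most $4\pi$ up to a small error. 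This reduction converts the Möbius analysis into a scaling analysis.

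I would split into two cases according to $\rho := \mathrm{dist}(\Sigma,p^*)$, with threshold $\rho_0$ a small positive power of $\epsilon := 1-|v|$. In the far case $\rho\ge\rho_0$, $\tilde\Sigma$ is bounded of diameter $\lesssim 1/\rho_0$, so $\lambda\tilde\Sigma$ collapses into a ball of radius $\lambda/\rho_0$ around the origin; the direct Jacobian bound $J_{F_v}\lesssim \epsilon^2/\rho_0^4$ together with $\mathrm{Area}(\Sigma)\le\mathcal{W}(\Sigma)\le 8\pi$ gives both the Hausdorff and the area estimate with the degenerate $S = \{-p^*\}$. In the near case $\rho<\rho_0$, $\tilde\Sigma$ extends to infinity, and the $(\gamma,r_0)$-regularity of $\Sigma$ at every scale $r\in[\rho,r_0]$ provides an asymptotic $2$-plane $W\subset(p^*)^\perp\cong\mathbb{R}^3$ through the origin: namely, the orthogonal projection onto $(p^*)^\perp$ of the tangent direction at the closest point $p\in\Sigma$ to $p^*$ stabilizes across scales with error $O(\gamma)$. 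I then take $S := \pi^{-1}(W)$, the unique great $2$-sphere through $\pm p^*$ with this tangent plane at $p^*$; crucially, $S$ is invariant as a set under $F_v$, because its defining $3$-dimensional linear subspace contains $v$ and is therefore mapped into itself by the Möbius formula.

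The rest is quantitative bookkeeping: splitting $\tilde\Sigma$ at an intermediate radius $R \sim \lambda^{-1/2}$, the outer part $\lambda(\tilde\Sigma\cap\{|y|\ge R\})$ stays within angular distance $O(\gamma)$ of $W$ by the scale-by-scale regularity, while the inner part lies in a ball of radius $\lambda R = \lambda^{1/2}$; balancing these yields $d_{\mathcal H}(\Sigma_v,S) = O(\gamma+\lambda^{1/2})$ and a parallel splitting of the integral $\int_{\lambda\tilde\Sigma}4/(1+|y|^2)^2\,dA$ against the exact value $\int_W 4/(1+|y|^2)^2\,dA = 4\pi$ gives the companion area estimate. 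The main obstacle is a genuine scale mismatch: in the $(\gamma,r_0)$-regular parametrization $\varphi:D_{r_0}\to\Sigma$, the perturbation $|\varphi(\tau)-\tau|\le\gamma r_0$ is of order $\gamma r_0$, which is much larger than the focus scale $\epsilon$ of $F_v$ once $\epsilon = \eta(\gamma)$ is chosen small. Pointwise comparison of $J_{F_v}\circ\varphi$ with $J_{F_v}$ is therefore impossible on an inner disk of radius $\gamma r_0$; the trick is precisely to use the scale-separated $(\gamma,r)$-regularity at every intermediate $r$ rather than a single global parametrization, so that the bad inner region can be handled by crude volumetric bounds while the outer region is controlled by asymptotic direction, recovering the rates $\gamma^{1/2}$ for the area and $\gamma^{1/4}$ for the Hausdorff distance.
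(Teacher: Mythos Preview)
Your stereographic reduction to a scaling problem is exactly the paper's starting point (the paper projects from $-v/|v|$ and scales by the reciprocal of your $\lambda$, which is equivalent), and your far/near dichotomy matches the paper's implicit split according to whether $G_v(\Sigma)$ meets a small ball around the origin. However, in the near case the paper's argument is considerably simpler than your multi-scale outline: it uses a \emph{single} scale rather than all scales. Concretely, the paper fixes the splitting radius $L=L(\gamma)\sim\gamma^{-1/4}$ first, and only then chooses $\eta(\gamma)$ so small that the pre-image radius $r_3:=L/\lambda$ satisfies $3r_3<r_0$. One application of the $(\gamma,3r_3)$-regular parametrization at the point of $G_v(\Sigma)$ closest to the origin yields one affine $2$-plane $T$ with $|\varphi(\tau)-\tau|\le\gamma\cdot 3r_3$ and $(1+\gamma)$-bilipschitz control; a direct Jacobian comparison against the area density $16/(4+|y|^2)^2$ over $T$ then gives both the $\gamma^{1/2}$ area bound and the $\gamma^{1/4}$ Hausdorff bound, with $S=G_v^{-1}(T)$.

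Your ``main obstacle'' paragraph misreads the definition: at scale $r<r_0$ the $(\gamma,r)$-parametrization has pointwise error $\gamma r$, not $\gamma r_0$, so there is no scale mismatch once one works at the single small scale $r=3r_3$. Your multi-scale alternative would work, but the claim that the approximating planes ``stabilize across scales with error $O(\gamma)$'' is not free: chaining $O(\gamma)$ angle discrepancies across $\sim\log(r_0/\rho)$ dyadic scales a~priori gives $O(\gamma\log(r_0/\rho))$ total drift, and $\rho$ is not controlled from below. This can be repaired (e.g.\ by choosing the number of scales in terms of $\gamma$ and absorbing the logarithm into $\eta(\gamma)$), but the paper's single-scale argument sidesteps the issue entirely and is what you should present.
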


\begin{proof}
The conformal transformation decomposes as $F_v = G_v^{-1} \circ S_\lambda \circ G_v$, where
\begin{align*}
G_v(x) &= \frac{2}{1+\langle x,\frac{v}{|v|}\rangle}(x-\langle x,\tfrac{v}{|v|}\rangle \tfrac{v}{|v|}), \\
S_\lambda(y) &= \lambda y \quad \text{with} \quad \lambda = \tfrac{1+|v|}{1-|v|}, \\
G_v^{-1}(z) &= \tfrac{4z}{4+|z|^2} + \tfrac{4-|z|^2}{4+|z|^2}\tfrac{v}{|v|}.
\end{align*}
Here $G_v: \mathbb{S}^3 \to H_v$ is stereographic projection to the hyperplane orthogonal to $v$.
The pullback metric is calculated as
\begin{align*}
g_v = \frac{16}{(4+|y|^2)^2}g_{\mathbb{R}^4}|_{H_v}.
\end{align*}
Let $\tilde{\Sigma}_v = G_v(\Sigma_v) = \lambda G_v(\Sigma)$. The area splits as
\begin{align}\label{area split}
\mathrm{Area}(\Sigma_v) = \mathrm{Area}_{g_v}(\tilde{\Sigma}_v \cap B_L(0)) + \mathrm{Area}_{g_v}(\tilde{\Sigma}_v \setminus B_L(0)).
\end{align}
Choose $L = \sqrt{\tfrac{32\pi}{\gamma^{1/2}}-4}$. For the exterior part, note that for $z \in \tilde{\Sigma}_v \setminus B_L(0)$, we have
\begin{align*}
w = G_v^{-1}(z) = \tfrac{4z}{4+|z|^2} + \tfrac{4-|z|^2}{4+|z|^2}\tfrac{v}{|v|},
\end{align*}
so,
\begin{align*}
\left|w + \tfrac{v}{|v|}\right| = \tfrac{4}{\sqrt{4+|z|^2}} \le \tfrac{4}{\sqrt{4+L^2}}.
\end{align*}
Thus
\begin{align*}
\mathrm{Area}_{g_v}(\tilde{\Sigma}_v \setminus B_L(0)) = \mathrm{Area}(\Sigma_v \cap B_r(-\tfrac{v}{|v|})) \quad \text{with} \quad r = \tfrac{4}{\sqrt{4+L^2}}=\frac{\gamma^{\frac{1}{4}}}{\sqrt{2\pi}}.
\end{align*}
By the monotonicity formula in $\mathbb{R}^4$, we know
\begin{align*}
\frac{\mathrm{Area}(\Sigma_v \cap B_r(p))}{r^2} \le \frac{1}{16}\int_{\Sigma_v} |\vec{H}|^2 d\mathcal{H}^2 = \frac{1}{4}\mathcal{W}(\Sigma) \le 2\pi.
\end{align*}
Therefore,
\begin{align}\label{area outside ball}
\mathrm{Area}_{g_v}(\tilde{\Sigma}_v \setminus B_L(0)) \le 2\pi \cdot \frac{16}{4+L^2} \le  \gamma^{1/2}.
\end{align}
and 
\begin{align}\label{distance outside ball}
d_{\mathcal{H}}(\Sigma_v\backslash G_v^{-1}(B_L(0)),-\frac{v}{|v|})\le  \gamma^{\frac{1}{4}}.
\end{align}
Now we estimate the interior part $\tilde{\Sigma}_v \cap B_L(0) = \lambda(G_v(\Sigma) \cap B_{r_3}(0))$ with $r_3 = \lambda^{-1}L=\frac{(1-|v|)L(\gamma)}{1+|v|}$.

 For this, we first note that for any surface $\Gamma\subset H_v$, the area is
\begin{align*}
\text{Area}_{g_v}(\Gamma)=\int_{\Gamma}\frac{16}{(4+|z|^2)^2}d\mathcal{H}^2(z).
\end{align*}
 Without loss of generality, assume $\Gamma=G_v(\Sigma)\cap B_{r_3}(0)\neq \emptyset$ and choose  $y_0\in G_v(\Sigma)\cap B_{2r_3}(0)$ such that
 \begin{align*}
 |y_0|=\inf_{y\in G_v(\Sigma)\cap B_{2r_3}(0) }|y|.
 \end{align*}
Then $|y_0|\le r_3$ and  $B_{r_3}(0)\subset B_{2r_3}(y_0)$.

Note $DG_v^{-1}(\frac{v}{|v|})=\text{id}:T_{\frac{v}{|v|}}\mathbb{S}^3=H_v\to H_v$. There exists $r_1=r_1(\gamma)>0$ such that
\begin{align*}
(1-\gamma)|x-y|\le |G_v(x)-G_v(y)|\le (1+\gamma)|x-y|,\quad \quad  \forall x,y\in \mathbb{S}^3\cap B_{r_1}(\frac{v}{|v|}).
\end{align*}
Since $\Sigma$ is  $(\gamma,r_0)$-regular,  we know that  there exists $r_2(\gamma)<\min\{r_1(\gamma), r_0(\gamma)\}$ such that $G(\Sigma_v)$ is $(2\gamma, r_2)$-regular.

Now, choose $\eta(\gamma)$ so small enough such that $1-|v|\le \eta(\gamma)$ implies $3r_3 < r_2(\gamma)$. Then, by the definition of $(2\gamma,r_2)$-regularity,  there exists an affine plane $T_{y_0,3r_3}$ and conformal parametrization $\varphi: D_{3r_3} \to G_v(\Sigma)$ such that
\begin{align*}
|\varphi(\tau)-\tau| \le 6\gamma r_3, \quad (1-2\gamma)|\tau_1-\tau_2| \le |\varphi(\tau_1)-\varphi(\tau_2)| \le (1+2\gamma)|\tau_1-\tau_2|.
\end{align*}
Let $y_1 \in T_{y_0,3r_3} \cap B_{3r_3}(0)=D_{3r_3}$ have minimal norm.Then $$|y_1| \le |y_0| \le r_3 \quad \text{ and }\quad  |y_0-y_1|^2 + |y_1|^2 = |y_0|^2.$$

\begin{figure}[htbp]
    \centering
    \begin{subfigure}[t]{0.45\linewidth}
        \centering
        \includegraphics[width=\linewidth]{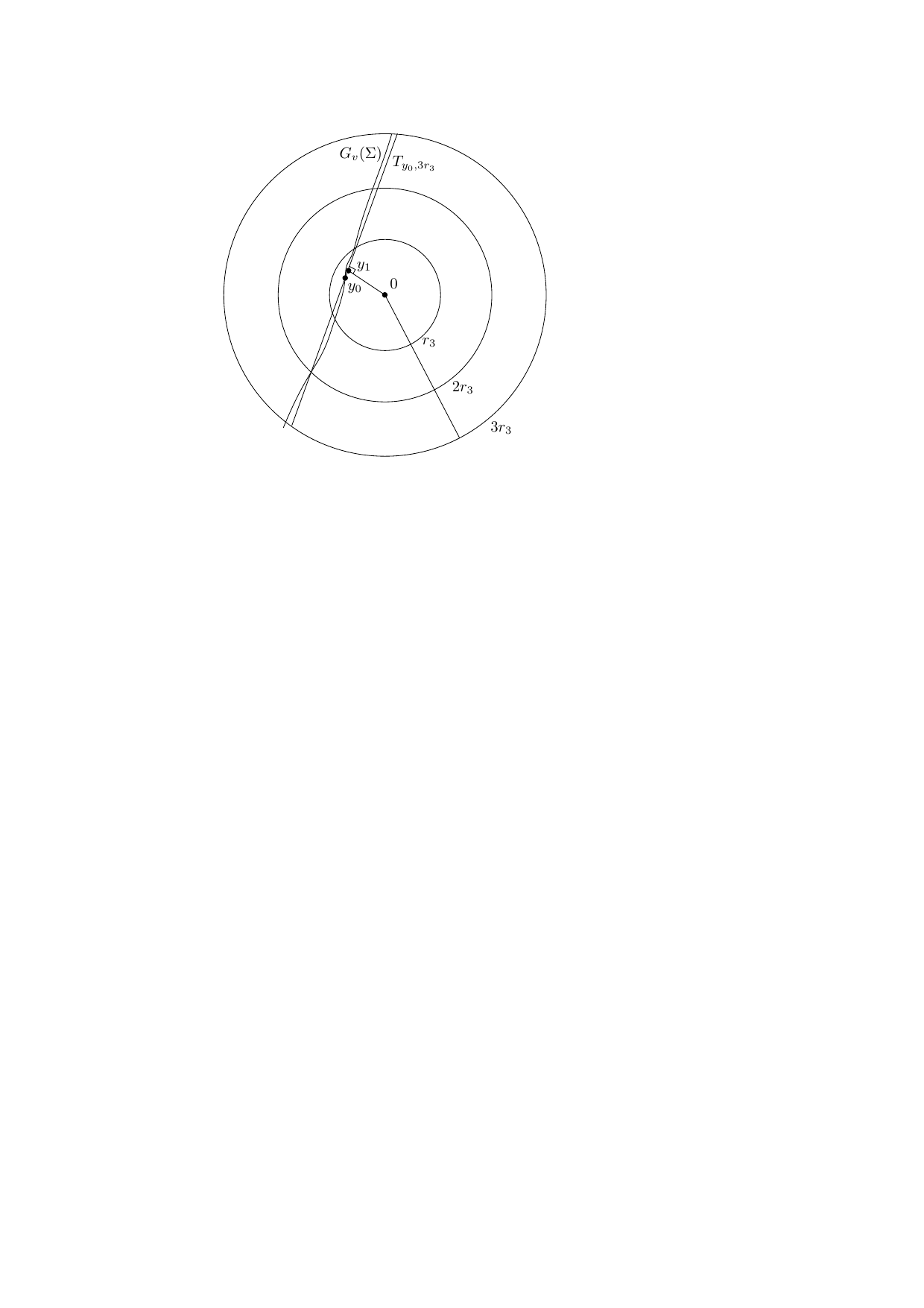}
        \caption{The stereographic configuration.}
        \label{fig:stereographic_config}
    \end{subfigure}\hfill
    \begin{subfigure}[t]{0.45\linewidth}
        \centering
        \includegraphics[width=\linewidth]{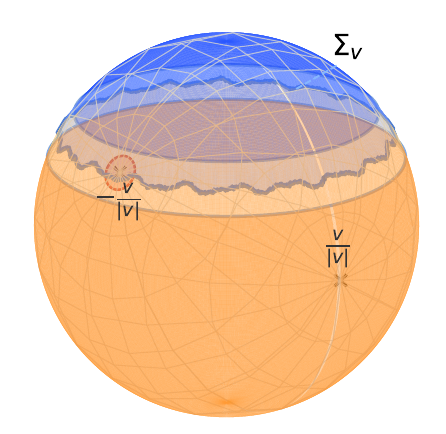}
        \caption{The configuration near $\Sigma_v$.}
        \label{fig:conformal_boundary_v}
    \end{subfigure}
    \caption{Comparison between configurations.}
    \label{fig:comparison}
\end{figure}

Since $|\varphi(y_1)-y_1| \le 6\gamma r_3$, we have $|\varphi(y_1)| \le |y_1| + 6\gamma r_3 \le (1+6\gamma)r_3$, so $\varphi(y_1) \in B_{2r_3}(0) \cap G_v(\Sigma)$. By minimality of $|y_0|$, we get $|y_0| \le |\varphi(y_1)|$, hence
\begin{align*}
|y_1|^2 + |y_0-y_1|^2= |y_0|^2 \le |\varphi(y_1)|^2 \le (|y_1| + 6\gamma r_3)^2.
\end{align*}
This implies $|y_0-y_1| \le 7\sqrt{\gamma} r_3$ and $|\varphi(y_0)-\varphi(y_1)| \le 14\sqrt{\gamma} r_3$.
Therefore,
\begin{align*}
B_{r_3}(0) \cap G_v(\Sigma) &\subset B_{2r_3}(y_0) \cap G_v(\Sigma) \\
&\subset \varphi(T_{y_0,3r_3} \cap B_{(2+2\gamma)r_3}(y_0)) \\
&\subset \varphi(T_{y_0,3r_3} \cap B_{(2+10\sqrt{\gamma})r_3}(y_1)).
\end{align*}

Let $T = \lambda T_{y_0,3r_3}$, $y_2 = \lambda y_1$, and $\tilde{\varphi}(\eta) = \lambda \varphi(\lambda^{-1}\eta)$. Then,
\begin{align*}
B_L(0) \cap \tilde{\Sigma}_v &= B_{\lambda r_3}(0) \cap \lambda G_v(\Sigma) \\
&\subset \lambda \varphi(\lambda^{-1}(T \cap B_{(2+10\sqrt{\gamma})L}(y_2))) \\
&= \tilde{\varphi}(T \cap B_{(2+10\sqrt{\gamma})L}(y_2)).
\end{align*}
Noting $L\le 20\gamma^{-\frac{1}{4}}$, we have $|D\tilde{\varphi}| \le 1+2\gamma$ and $|\tilde{\varphi}(\eta)-\eta| \le 6\gamma L\le C\gamma^{\frac{3}{4}}$. For $\eta \in B_{3L}(y_2)$,  we estimate
\begin{align*}
4+|\eta|^2 &\le 4 + (|\tilde{\varphi}(\eta)| + 6\gamma L)^2 \\
&\le 4 + |\tilde{\varphi}(\eta)|^2 + (6\gamma L)^2 + 12|\tilde{\varphi}(\eta)|\gamma L \\
&\le 4 + |\tilde{\varphi}(\eta)|^2 + C\gamma^{1/2}.
\end{align*}
Thus,
\begin{align*}
\mathrm{Area}_{g_v}(\tilde{\Sigma}_v \cap B_L(0)) 
&\le \int_{T \cap B_{3L}(y_2)} \frac{16(1+2\gamma)^2}{(4+|\tilde{\varphi}(\eta)|^2)^2} d\mathcal{H}^2(\eta) \\
&\le (1 + C\gamma^{1/2}) \int_T \frac{16}{(4+|\eta|^2)^2} d\mathcal{H}^2(\eta) \\
&\le (1 + C\gamma^{1/2}) \cdot 4\pi.
\end{align*}
Substitute this and \eqref{area outside ball} into \eqref{area split} we get $\mathrm{Area}(\Sigma_v)\le 4\pi(1+C\gamma^{\frac{1}{2}})$. 

For the Hausdorff distance estimate, note
\begin{align*}
\tilde{\Sigma}_v \cap B_L(0) \subset B_{C\gamma^{3/4}}(T), \quad T \cap B_L(0) \subset B_{C\gamma^{3/4}}(\tilde{\Sigma}_v \cap B_L(0)).
\end{align*}
Pulling back to $\mathbb{S}^3$ via the Lipschitz map $G_v^{-1}$  and combining with \eqref{distance outside ball}, gives $d_{\mathcal{H}}(\Sigma_v, S^2) \le C\gamma^{1/4}$, where $S^2=G_v^{-1}(T)$.
\end{proof}

 A direct corollary of  Lemma \ref{small t} is the following. 
\begin{corollary}\label{area estimate for small t}

There exists $t_0>0$ $\gamma_0>0$ and $\eta_0>0$ such that  for any $(\gamma,r_0)$-regular surface $\Sigma\subset \mathbb{S}^3$  with $\gamma\le \gamma_0$ and   $\mathcal{W}(\Sigma)\le 8\pi$,   if
$0<1-|v|<\eta_0$ and  either $|t|\le t_0$ or $|t\pm \pi|\le t_0$,   
then  
\begin{align*}
\text{Area}(\Sigma_{(v,t)})\le 5\pi<2\pi^2.
\end{align*}
\end{corollary}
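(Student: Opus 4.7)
The corollary is essentially a direct quantitative consequence of Lemma~\ref{small t}, exploiting the fact that $|\sin t|$ is small near both $t=0$ and $t=\pm\pi$. The plan is simply to choose $\gamma_0$, $\eta_0$, and $t_0$ so that each of the two terms in the estimate of Lemma~\ref{small t} stays strictly below a definite fraction of $5\pi$. No new geometric ideas are required; all the serious work has already been done in Lemma~\ref{small t} (the Jacobian/monotonicity computation) and Lemma~\ref{conformal boundary} (the stereographic comparison with a geodesic sphere).

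Concretely, let $C$ denote the constant appearing in Lemma~\ref{small t}. First I would fix $\gamma_0$ so small that
\[
4\pi\bigl(1+C\gamma_0^{1/2}\bigr) \;\le\; 4\pi+\tfrac{\pi}{4},
\]
which holds for instance as soon as $\gamma_0\le (16C)^{-2}$. Having fixed $\gamma_0$, I set $\eta_0:=\eta(\gamma_0)$, where $\eta(\cdot)$ is the function supplied by Lemma~\ref{small t}. Then for any $(\gamma,r_0)$-regular torus $\Sigma$ with $\gamma\le\gamma_0$ and $\mathcal{W}(\Sigma)\le 8\pi$, and for every $v$ with $0<1-|v|<\eta_0$ and every $t\in[-\pi,\pi]$, Lemma~\ref{small t} yields
\[
\mathrm{Area}(\Sigma_{(v,t)}) \;\le\; 4\pi+\tfrac{\pi}{4}+128\pi\,|\sin t|.
\]

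Next I would choose $t_0>0$ such that $128\pi\,|\sin t|\le \tfrac{3\pi}{4}$ whenever $|\sin t|\le\sin t_0$; since $|\sin t|\le |t|$, it suffices to take for instance $t_0:=\min\{1,\tfrac{3}{512}\}$. The only point to check is that the hypothesis $|t\pm\pi|\le t_0$ also implies smallness of $|\sin t|$: this follows at once from the identity $\sin t=-\sin(t\mp\pi)$, which gives $|\sin t|\le|t\mp\pi|\le t_0$. Consequently, in both cases $|t|\le t_0$ and $|t\pm\pi|\le t_0$, we obtain
\[
\mathrm{Area}(\Sigma_{(v,t)}) \;\le\; 4\pi+\tfrac{\pi}{4}+\tfrac{3\pi}{4} \;=\; 5\pi \;<\; 2\pi^2,
\]
which is the desired conclusion. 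Since this is a straightforward bookkeeping step following immediately from Lemma~\ref{small t}, I do not anticipate any real obstacle; its role is to dispose of the two endpoint regimes in $t$, so that in the remainder of the appendix one only has to treat the intermediate range where $|\sin t|$ is bounded away from zero.
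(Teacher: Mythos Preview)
Your proof is correct and matches the paper's intent: the paper states the corollary as ``a direct corollary of Lemma~\ref{small t}'' without further argument, and your bookkeeping is exactly the straightforward derivation the paper has in mind. The only implicit step worth making explicit is that a $(\gamma,r_0)$-regular surface with $\gamma\le\gamma_0$ is in particular $(\gamma_0,r_0)$-regular, so Lemma~\ref{small t} applies with parameter $\gamma_0$ and threshold $\eta(\gamma_0)$; this is immediate from the definition.
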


\subsection{Estimate the area for $|t|$ away from  $0$ with $\textup{diam}(\Sigma_{(v,t)})\sim 1$}

We begin by fixing notation and describing the position of $\Sigma_{(v,t)}$.  
Let $\mathbb{S}^3\setminus \Sigma_v = A_v \cup A_v^*$ and $\mathbb{S}^3\setminus S = B \cup B^*$ denote the decompositions into connected components, and let $N_S$ be the unit normal to $S$ pointing into $B^*$.  
Define the parallel spheres
\[
S_t = \{\exp_S(tN_S(x)) = \cos t\, x + \sin t\, N_S(x) \mid x \in S\}, \qquad t \in [t_-,t_+],
\]
where $t_\pm$ are chosen so that $\mathrm{diam}(S_t)>0$ for $t\in(t_-,t_+)$ and $\mathrm{diam}(S_{t_\pm})=0$.  
For $t\in[-\pi,t_-)\cup(t_+,\pi]$, set $S_t=\emptyset$.  
Then $B^*=\bigcup_{0<t\le\pi}S_t$ and $B=\bigcup_{0<t\le\pi}S_{-t}$.

\begin{lemma}[Position of the parallel family]\label{position estimate} 
Let $\Sigma\subset\mathbb S^3$ be $(\gamma,r_0)$-regular with $\mathcal W(\Sigma)\le8\pi$.  
Then for every $\varepsilon>0$ there exist $\gamma(\varepsilon),\eta(\varepsilon)>0$ such that if $\gamma\le\gamma(\varepsilon)$ and $|v|\ge1-\eta(\varepsilon)$, then for all $t\in[-\pi,\pi]$, the parallel surface $\Sigma_{(v,t)}$ satisfies
\begin{equation}\label{bounded in two bands}
\Sigma_{(v,t)}\subset B_\varepsilon(S_t),
\end{equation}
where $S_t$ is the parallel sphere of $S=G_v^{-1}(T)$ at signed distance $t$.
\end{lemma}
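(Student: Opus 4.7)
The strategy is to deduce the inclusion from the Hausdorff bound $d_{\mathcal{H}}(\Sigma_v, S) \le C\gamma^{1/4}$ supplied by Lemma~\ref{conformal boundary}, by comparing the signed distance functions $d_v$ and $d_S$ and transferring closeness of their zero level sets to their $t$-level sets. Since both $\Sigma_v$ and $S$ separate $\mathbb{S}^3$ into two components and are $C\gamma^{1/4}$-Hausdorff close, any reference point at distance $\ge \pi/4$ from $S$ lies unambiguously in one component of each complement for $\gamma$ small, fixing a canonical correspondence $A_v^* \leftrightarrow B^*$. The local graph representation inside the proof of Lemma~\ref{conformal boundary} shows that the unit normals to $\Sigma_v$ and $S$ are also uniformly close, so this topological correspondence is compatible with the orientation induced by $N_v$ and $N_S$. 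Under this sign convention, the 1-Lipschitz property of $d_v, d_S$ together with the Hausdorff bound yields
\[
|d_v(x) - d_S(x)| \le d_{\mathcal{H}}(\Sigma_v, S) \le C\gamma^{1/4}
\qquad\text{for every } x \in \mathbb{S}^3.
\]

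For $x \in \Sigma_{(v,t)} = \partial\{d_v < t\}$, continuity of $d_v$ forces $d_v(x) = t$, so $|d_S(x) - t| \le C\gamma^{1/4}$; equivalently, $x \in S_s$ for some $s$ with $|s - t| \le C\gamma^{1/4}$. Since the proof of Lemma~\ref{conformal boundary} shows that $S = G_v^{-1}(T)$ is close to a great sphere (the defining plane $T$ lies within $O(\gamma^{1/4})$ of the origin in the stereographic picture), the parallel family $s \mapsto S_s$ is uniformly Lipschitz in the Hausdorff distance on $[t_-,t_+]$, with a constant depending only on the ambient geometry. Consequently $d_{\mathcal{H}}(S_s, S_t) \le C|s-t| \le C\gamma^{1/4}$, and $x \in B_{C\gamma^{1/4}}(S_t)$. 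Choosing $\gamma(\varepsilon)$ small enough absorbs the constant and yields the desired inclusion $\subset B_\varepsilon(S_t)$.

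The main delicacy is the degenerate regime where $t$ lies within $C\gamma^{1/4}$ of $t_\pm$, or outside $[t_-,t_+]$ entirely, so that $S_t$ shrinks to a pole $p^\pm$ or becomes empty. From $|d_v - d_S| \le C\gamma^{1/4}$ one obtains $\mathrm{range}(d_v) \subset [t_- - C\gamma^{1/4}, t_+ + C\gamma^{1/4}]$, so $\Sigma_{(v,t)}$ is empty for $t$ outside this extended range; for $t$ within $C\gamma^{1/4}$ of $t_\pm$, the slab $\{|d_S - t| \le C\gamma^{1/4}\}$ is a geodesic ball of radius $O(\gamma^{1/4})$ around $p^\pm$, so both $\Sigma_{(v,t)}$ and $S_t$ are concentrated there and the inclusion holds after further shrinking $\gamma(\varepsilon)$. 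I expect this collapsing-pole regime to be the principal source of technical work, together with the orientation-matching step, which relies on the global two-component topology rather than on local geometric control and must be carried out with some care because $\Sigma_v$ is a torus while $S$ is a sphere.
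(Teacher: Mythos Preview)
Your approach is essentially the same as the paper's: both reduce the inclusion to the estimate $|d_v-d_S|\le C\gamma^{1/4}$ on signed distance functions, which in turn requires the side correspondence $A_v^*\leftrightarrow B^*$, and then read off $\Sigma_{(v,t)}\subset\{|d_S-t|\le C\gamma^{1/4}\}\subset B_\varepsilon(S_t)$.

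The one place where your argument is genuinely incomplete is exactly the point you flag at the end.  Choosing a single reference point far from $S$ does \emph{not} fix a well-defined correspondence between the two pairs of components: Hausdorff closeness alone is compatible with the torus $\Sigma_v$ bounding a thin solid torus inside the tubular neighbourhood $B_\varepsilon(S)\cong S^2\times(-\varepsilon,\varepsilon)$, in which case both spherical caps $\{d_S>\varepsilon\}$ and $\{d_S<-\varepsilon\}$ would lie in the \emph{same} component of $\mathbb{S}^3\setminus\Sigma_v$, and the signed-distance comparison fails.  Closeness of normals along the graphical part of $\Sigma_v$ is a local statement and does not by itself exclude this scenario, because the non-graphical portion of $\Sigma_v$ concentrated near the pole $-v/|v|$ carries the genus.

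The paper resolves this cleanly by working in the stereographic chart: there $\tilde{\Sigma}_v\cap B_L$ is a $W^{2,2}$-graph over $T$, hence homotopic to $T$ rel $\partial B_L$ inside the slab, so a normal segment to $T$ has mod-$2$ intersection number $1$ with $\tilde{\Sigma}_v$.  This forces the two half-slabs to lie in different components of $B_L\setminus\tilde{\Sigma}_v$, and since the remainder of $\tilde{\Sigma}_v$ lies outside $B_L$, these local components coincide with the global $A_v,A_v^*$.  Once you supply this step, the rest of your argument (the $1$-Lipschitz comparison, the collapsing-pole regime, the Lipschitz dependence $d_{\mathcal H}(S_s,S_t)=|s-t|$) is correct and matches the paper's reasoning.
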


\begin{proof}
In the stereographic chart $G_v$, the surface $B_L(0)\cap\tilde{\Sigma}_v$ lies in the thin tubular neighborhood $B_{C\gamma^{3/4}}(T)$ of $T$, and $(B_L(0)\cap H_v)\setminus B_{C\gamma^{3/4}}(T)$ has exactly two connected components.  
Since $\tilde{\Sigma}_v$ is $W^{2,2}$-close to $T$, it is homotopic to $T$ within this neighborhood while fixing $\partial B_L(0)$.  
Using mod-$2$ intersection numbers of transverse paths, one finds that the two components of $(B_L(0)\cap H_v)\setminus\tilde{\Sigma}_v$ correspond bijectively to those of $(B_L(0)\cap H_v)\setminus B_{C\gamma^{3/4}}(T)$.  
Under the stereographic projection $G_v^{-1}$, these local sides map to the global components $A_v$ and $A_v^*$ of $\mathbb S^3\setminus\Sigma_v$; the side containing a neighborhood of $-v/|v|$ corresponds to $A_v^*$ by the exterior control~\eqref{distance outside ball}.  
Thus the two sides of $\Sigma_v$ coincide with those of the comparison sphere $S=G_v^{-1}(T)$.

Combining this side correspondence with the Hausdorff estimate $d_{\mathcal H}(\Sigma_v,S)\le C\gamma^{1/4}$ yields the two–sided containment
\[
\{\phi_S>\varepsilon\}\subset A_v^*\subset\{\phi_S>-\varepsilon\},
\qquad
\{\phi_S<-\varepsilon\}\subset A_v\subset\{\phi_S<\varepsilon\},
\]
where $\phi_S$ is the signed distance to $S$ (positive on $B^*$) and $\varepsilon\sim C\gamma^{1/4}$.  
Since the normal flow of $S$ preserves inclusions of superlevel sets, applying it to these bands gives, for each $t\in[t_0,\pi-t_0]$,
\[
\{\phi_S>t+\varepsilon\}\subset A_{(v,t)}^*\subset\{\phi_S>t-\varepsilon\},
\]
where $A_{(v,t)}^*$ denotes the outward parallel set of $A_v^*$ with boundary $\Sigma_{(v,t)}$.  
Taking boundaries yields
\[
\Sigma_{(v,t)}\subset\{|\phi_S-t|\le\varepsilon\}=B_\varepsilon(S_t).
\]
Because $\Sigma_{(v,t)}$ and $S_t$ lie on the same $B^*$–side of $S$, while $B_\varepsilon(S_{-t})\subset B$ for $t>\varepsilon$, the inclusion~\eqref{bounded in two bands} follows.  
\end{proof}

For convenience, we introduce some notations.  For $x\in \Sigma_{v,t}$, choose $q_{-\varepsilon}\in S_{-\varepsilon}$ such that $d(x,S_{-\varepsilon})$ and let $N_(q_{-\varepsilon })\in T_{q_{-\varepsilon}}^{\bot}S_{-\varepsilon}$ be the unit normal vector. We denote 
 $$r_\varepsilon(x)=d(x,S_{-\varepsilon})-t$$
 and  define the projection $\pi_\varepsilon: \Sigma_{v,t}\to S_{t-\varepsilon}$ by   $$\pi_\varepsilon(x)=\cos t q_{-\varepsilon}+\sin t N(q_{-\varepsilon}).$$

To proceed, we need a quantitative estimate for distance-realizing points.

\begin{lemma}\label{quantitative uniqueness of distance realizing point}
For fixed $d > 0$ and $\varepsilon > 0$, define $\delta_1(\varepsilon|d)$ as the supremum of $d(p,q)$ over $p,q \in S_{-\varepsilon}$ with $\mathrm{diam}(S_t) \ge d$, $x \in B_\varepsilon(S_t)$ satisfying $d(x,q) = d(x,S_{-\varepsilon})$ and $|d(x,p) - d(x,q)| \le 2\varepsilon$. Then,
$$\lim_{\varepsilon\to 0} \delta_1(\varepsilon|d) = 0.$$
\end{lemma}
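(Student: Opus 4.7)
The plan is to argue by compactness and contradiction, reducing the quantitative uniqueness to the classical geometric fact that in $\mathbb{S}^3$ the nearest-point projection from a point $x$ onto a geodesic $2$-sphere $S$ is single-valued as long as $x$ is not the centre or the antipode of $S$.

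Suppose the conclusion fails. Then there exist $\delta_0>0$ and, along a sequence $\varepsilon_k\to 0$, underlying geodesic spheres $S^{(k)}\subset\mathbb{S}^3$ with parallel families $\{S^{(k)}_t\}$, parameters $t_k$, points $x_k\in B_{\varepsilon_k}(S^{(k)}_{t_k})$, and $p_k,q_k\in S^{(k)}_{-\varepsilon_k}$ satisfying $d(x_k,q_k)=d(x_k,S^{(k)}_{-\varepsilon_k})$, $|d(x_k,p_k)-d(x_k,q_k)|\le 2\varepsilon_k$, $\operatorname{diam}(S^{(k)}_{t_k})\ge d$, and $d(p_k,q_k)\ge\delta_0$. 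Each $S^{(k)}$ is determined by its centre $c_k\in\mathbb{S}^3$ and its geodesic radius $r_k\in[0,\pi]$, so the parameter space is compact. After passing to subsequences, I may assume $c_k\to c_\infty$, $r_k\to r_\infty$, $t_k\to t_\infty$, $x_k\to x_\infty$, $p_k\to p_\infty$, $q_k\to q_\infty$, with corresponding limiting spheres $S_\infty$ and $S^\infty_{t_\infty}$.

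By continuity of the diameter in $(c,r)$, the bound $\operatorname{diam}(S^\infty_{t_\infty})\ge d>0$ persists in the limit, forcing $r_\infty+t_\infty\in(0,\pi)$. In particular $x_\infty\in S^\infty_{t_\infty}$ lies at geodesic distance $r_\infty+t_\infty$ from $c_\infty$, so $x_\infty\notin\{c_\infty,-c_\infty\}$. The two distance identities pass to the limit because $S^{(k)}_{-\varepsilon_k}\to S_\infty$ in Hausdorff distance, giving $d(x_\infty,q_\infty)=d(x_\infty,S_\infty)$ and $d(x_\infty,p_\infty)=d(x_\infty,q_\infty)$. But since $x_\infty\neq\pm c_\infty$, the great circle through $x_\infty$ and $c_\infty$ is unique, and on it there is exactly one point of $S_\infty$ closest to $x_\infty$. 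This forces $p_\infty=q_\infty$, contradicting $d(p_\infty,q_\infty)\ge\delta_0>0$.

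I expect the main obstacle, if any, to be purely bookkeeping: verifying that the diameter hypothesis truly rules out degeneration of $S_\infty$ and that the distance-realising conditions are stable under Hausdorff convergence. Once these are in hand, the qualitative uniqueness of the foot of the perpendicular, together with compactness, yields the desired uniform quantitative statement with no further work.
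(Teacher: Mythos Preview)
Your proposal is correct and follows essentially the same contradiction-and-compactness argument as the paper's proof: assume a sequence with $d(p_k,q_k)\ge\delta_0$, pass to limits, and contradict the uniqueness of the nearest point on a geodesic sphere for a point that is not one of its poles. You are slightly more explicit than the paper in allowing the underlying sphere $S^{(k)}$ to vary and in spelling out why $x_\infty\neq\pm c_\infty$ (equivalently, $\operatorname{diam}(S^\infty_{t_\infty})>0$), but these are elaborations of the same idea rather than a different approach.
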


\begin{proof}
Assume otherwise. Then there exist sequences $\varepsilon_i\to 0$, $p_i,q_i\in S_{-\varepsilon_i}$ with $d(p_i,q_i)\ge\delta_0>0$, and $x_i\in B_{\varepsilon_i}(S_{t_i})$ with $\mathrm{diam}(S_{t_i})\ge d$, $d(x_i,q_i)=d(x_i,S_{-\varepsilon_i})$, and $|d(x_i,p_i)-d(x_i,q_i)|\le 2\varepsilon_i$. Passing to limits, $x_i\to x\in S_t$, $p_i\to p\in S$, $q_i\to q\in S$ with $d(x,p)=d(x,q)=d(x,S)$ and $d(p,q)\ge\delta_0$, contradicting uniqueness of distance-realizing points when $\mathrm{diam}(S_t)>0$.
\end{proof}

With this preparation, we can now estimate the angle between normals.

\begin{lemma}\label{support sphere and angle estimate} 
Let $t_0$ be as in Corollary \ref{area estimate for small t} and $d > 0$ be small. For any $\varepsilon > 0$, let $\eta(\varepsilon), \gamma(\varepsilon) > 0$ be as in Lemma \ref{position estimate}. Assume $\Sigma \subset \mathbb{S}^3$ be $(\gamma,r_0)$-regular with $\gamma \le \gamma(\varepsilon)$ and $\mathcal{W}(\Sigma) \le 8\pi$. Then for any $|t|\geq t_0$, $|v| \ge 1 - \eta(\varepsilon)$, and $x \in \Sigma_{(v,t)}$, there exists a supporting ball $B_t(p)$ with
\begin{align}\label{support sphere}
x \in \partial B_t(p) \quad \text{and} \quad \Sigma_{(v,t)} \cap \mathring{B}_t(p) = \emptyset.
\end{align}
Moreover, if $\mathrm{diam}(S_t) \ge d > 0$, then the angle between normals satisfies
\begin{align} \label{small angle estimate}
\theta = \angle(N_1(x), N_2(x)) \le \delta(\varepsilon|d,t_0),
\end{align}
where $\lim_{\varepsilon \to 0} \delta(\varepsilon|d,t_0) = 0$, $N_2(x)$ points outside $B_t(p)$ and $N_1(x)$ points into $\cup_{\tau>d(x,S)}S_\tau$.
\end{lemma}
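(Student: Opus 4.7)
The plan is to build the supporting ball directly from the parallel-set description of $\Sigma_{(v,t)}$, and then to compare $N_{2}(x)$ with $N_{1}(x)$ by transporting the closest point $p\in\Sigma_{v}$ onto the auxiliary parallel sphere $S_{-\varepsilon}$, where Lemma~\ref{quantitative uniqueness of distance realizing point} will pin it close to $q_{-\varepsilon}$. The hypothesis $\mathrm{diam}(S_{t})\ge d$ plays two distinct roles: it rules out the degeneracy that would spoil Lemma~\ref{quantitative uniqueness of distance realizing point}, and it keeps $|t|$ bounded away from $\pi$ so the angle at $x$ depends quantitatively on the endpoints of the two geodesics meeting there.

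\textbf{Supporting ball.} Given $x\in\Sigma_{(v,t)}$, I would take $p\in\Sigma_{v}$ to be any point realizing $d(x,\Sigma_{v})=|t|$ and set $B_{t}(p):=B_{|t|}(p)$. For every $y\in\Sigma_{(v,t)}$ one has $d(y,p)\ge d(y,\Sigma_{v})=|t|$, hence $y\notin\mathring B_{|t|}(p)$, which is exactly~\eqref{support sphere}. The outward unit normal of $B_{t}(p)$ at $x$ is by construction the tangent at $x$ of the length-$|t|$ minimizing geodesic from $p$ through $x$, i.e., $N_{2}(x)$.

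\textbf{Angle estimate.} I would treat the case $t>0$; the case $t<0$ is analogous after reversing the roles of the two sides of $S$. By Lemma~\ref{position estimate} (with $\gamma,\eta$ small enough) we can arrange $\Sigma_{v}\subset B_{\varepsilon}(S)$ and $x\in B_{\varepsilon}(S_{t})$ with $\varepsilon=\varepsilon(\gamma)\to 0$, and in particular $|d(x,S)-t|\le\varepsilon$. To transport $p$ onto $S_{-\varepsilon}$, I would first project $p$ orthogonally onto $S$ to obtain $\bar p\in S$ with $d(p,\bar p)\le\varepsilon$, then slide $\bar p$ a distance $\varepsilon$ along the inward normal to $S$ (into $B$) to land on $\tilde p\in S_{-\varepsilon}$ with $d(p,\tilde p)\le 2\varepsilon$. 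The triangle inequality gives $d(x,\tilde p)\le t+2\varepsilon$, while the parallel foliation yields
\[
d(x,q_{-\varepsilon})=d(x,S_{-\varepsilon})=d(x,S)+\varepsilon\in[t,t+2\varepsilon],
\]
so $|d(x,\tilde p)-d(x,q_{-\varepsilon})|\le 2\varepsilon$. This places $\tilde p$ and $q_{-\varepsilon}$ in the hypotheses of Lemma~\ref{quantitative uniqueness of distance realizing point}, which gives $d(\tilde p,q_{-\varepsilon})\le\delta_{1}(\varepsilon\,|\,d)$ and hence
\[
d(p,q_{-\varepsilon})\;\le\; 2\varepsilon+\delta_{1}(\varepsilon\,|\,d).
\]
Since $N_{1}(x)$ is the tangent at $x$ of the normal-to-$S$ geodesic from $q_{-\varepsilon}$ through $x$ (of length $d(x,S)+\varepsilon\in[t,t+2\varepsilon]$) and $N_{2}(x)$ is the tangent at $x$ of the geodesic from $p$ through $x$ (of length $t\ge t_{0}$), the spherical law of cosines applied to the triangle $\triangle(x,p,q_{-\varepsilon})$ yields
\[
\angle(N_{1}(x),N_{2}(x))\;\le\; C(t_{0},d)\bigl(d(p,q_{-\varepsilon})+\varepsilon\bigr)\;=:\;\delta(\varepsilon\,|\,d,t_{0}),
\]
with $\delta(\varepsilon\,|\,d,t_{0})\to 0$ as $\varepsilon\to 0$. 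The main obstacle is exactly this alignment step: because $p$ lives on $\Sigma_{v}$ and not on the normal geodesic foliation of $S$ through $x$, a direct triangle comparison fails, and the two-stage projection $p\mapsto\bar p\mapsto\tilde p$ followed by Lemma~\ref{quantitative uniqueness of distance realizing point} is what promotes the Hausdorff closeness $d_{\mathcal H}(\Sigma_{v},S)\lesssim\varepsilon$ to the genuine angular closeness of $N_{1}$ and $N_{2}$.
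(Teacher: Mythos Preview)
Your proof is correct and follows essentially the same route as the paper: construct the supporting ball from a closest point $p\in\Sigma_v$, transport $p$ onto $S_{-\varepsilon}$ (you go via $S$ in two steps, the paper takes the nearest point on $S_{-\varepsilon}$ directly, but both give $d(p,\,\cdot\,)\le 2\varepsilon$), apply Lemma~\ref{quantitative uniqueness of distance realizing point} to bound $d(p,q_{-\varepsilon})$, and finish with the spherical law of cosines on $\triangle(x,p,q_{-\varepsilon})$. The paper just makes the cosine-law step explicit, using $\mathrm{diam}(S_t)\ge d$ to keep $b\le t_d<\pi$ and hence $\sin a\sin b\ge\min\{\sin^2 t_0,\sin^2 t_d\}$, which is the content of your constant $C(t_0,d)$.
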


\begin{proof}
Wolg, assume $t\geq 0$. Take $p \in \Sigma_v$ with $d(x,p) = t$ for the supporting sphere.  Then $\theta = \angle pxq_{-\varepsilon}$. Set $a = t$, $b = d(x,q_{-\varepsilon}) \in [t, t+2\varepsilon]$, $c = d(p,q_{-\varepsilon})$.

By spherical cosine law, we know 
\begin{align*}
\cos c = \cos(a-b) + \sin a \sin b (\cos \theta - 1). 
\end{align*}
 Thus
\begin{align*}
1 - \cos \theta = \frac{\cos(a-b) - \cos c}{\sin a \sin b} \le \frac{1 - \cos c}{\min \{(\sin t_0)^2,(\sin t_d)^2\}},
\end{align*}
where we use $\mathrm{diam}(S_t)\ge d$ implies $a,b\le t_d$ for some $t_d<\pi.$ 

Choose $p_{-\varepsilon} \in S_{-\varepsilon}$ with $d(p,p_{-\varepsilon})=d(p,S_{-\varepsilon}) \le 2\varepsilon$. Then $|d(x,p_{-\varepsilon}) - d(x,q_{-\varepsilon})| \le 2\varepsilon$. So by Lemma \ref{quantitative uniqueness of distance realizing point}, we know
\begin{align*}
d(p_{-\varepsilon}, q_{-\varepsilon}) \le \delta_1(\varepsilon|d),
\end{align*}
hence  $c\le d(p,p_{-\varepsilon})+d(p_{-\varepsilon},q_{-\varepsilon}) \le 2\varepsilon + \delta_1(\varepsilon|d)$.

Using $1 - \cos x \le x^2/2$, we get 
\begin{align*}
1 - \cos \theta \le \frac{(2\varepsilon + \delta_1)^2}{2\min \{(\sin t_0)^2,(\sin t_d)^2\}}.
\end{align*}
For small $\theta$,  $1 - \cos \theta \ge \theta^2/4$ implies 
\begin{align*}
\theta \le \frac{2\sqrt{2}}{\min\{\sin t_0, \sin t_d\}}(2\varepsilon + \delta_1) =: \delta(\varepsilon|d,t_0),
\end{align*}
with $\lim_{\varepsilon \to 0} \delta(\varepsilon|d,t_0) = 0$.
\end{proof}

Next, we establish a key Lipschitz-type estimate.

\begin{lemma}\label{two point cone}
Let $\{x_i\}_{i=1,2}\subset B_{\varepsilon}(S_t)\cap \Sigma_{v,t}$ with supporting balls $B_t(p_i), t\ge t_0$,  $\max\theta(x_i)\le\delta(\varepsilon|d,t_0)$ and  $\mathrm{diam}(S_t)\ge d$. Then,
$$|r_\varepsilon(x_1)-r_\varepsilon(x_2)|\le\delta_2(\varepsilon|d,t_0)d(x_1,x_2),$$
where  $\lim_{\varepsilon\to 0}\delta_2(\varepsilon|d,t_0)=0$.
\end{lemma}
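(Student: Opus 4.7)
The plan is to exploit the small angle bound $\delta=\delta(\varepsilon|d,t_0)$ from Lemma~\ref{support sphere and angle estimate}, which by assumption holds uniformly on $\Sigma_{v,t}$: the tangent plane $T_x\Sigma_{v,t}$ makes angle at most $\delta$ with the tangent plane of the parallel sphere through $x$. I would combine this with the uniform supporting balls of radius $t\ge t_0$ to represent $\Sigma_{v,t}$ locally as a Lipschitz graph over a parallel sphere with small Lipschitz constant, and then extract the desired estimate from the fundamental theorem of calculus.

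The first step is to observe that, because $\mathrm{diam}(S_t)\ge d$ keeps us uniformly away from the focal set, the signed distance $\rho(x):=d(x,S_{-\varepsilon})$ is smooth in $B_{2\varepsilon}(S_t)$ with $\nabla^{\mathbb{S}^3}\rho=N_1$. Along $\Sigma_{v,t}$ its tangential component is $N_1-\langle N_1,N_2\rangle N_2$, whose norm equals $\sin\angle(N_1,N_2)\le\sin\delta$. The fundamental theorem of calculus then gives, for any rectifiable curve $\gamma$ on $\Sigma_{v,t}$ joining $x_1$ and $x_2$,
\[
|r_\varepsilon(x_1)-r_\varepsilon(x_2)|\le\sin\delta\cdot\mathrm{length}(\gamma).
\]
The remaining task is therefore to connect $x_1$ and $x_2$ by a curve on $\Sigma_{v,t}$ whose length is comparable to the ambient distance $d(x_1,x_2)$.

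For this I would split into two regimes. If $d(x_1,x_2)\le\lambda_0$, where $\lambda_0=\lambda_0(t_0)>0$ is a fixed threshold determined only by the supporting-ball radius, then the one-sided $C^{1,1}$ estimate from $B_t(p_1)$ implies that $\Sigma_{v,t}$ is, on a ball of radius $\lambda_0$ around $x_1$, a graph over its tangent plane with uniformly bounded second fundamental form. Since this tangent plane is $\delta$-close to the tangent plane of $S_{t-\varepsilon}$ at $\pi_\varepsilon(x_1)$, one may rewrite $\Sigma_{v,t}$ locally as a graph $u$ over $S_{t-\varepsilon}$ with $|\nabla u|\le C\tan\delta$. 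Lifting the unique minimizing short $S_{t-\varepsilon}$-geodesic from $\pi_\varepsilon(x_1)$ to $\pi_\varepsilon(x_2)$ then produces a curve on $\Sigma_{v,t}$ of length $\le(1+C\delta^2)d(x_1,x_2)$, and combining with the tangential gradient bound yields $|r_\varepsilon(x_1)-r_\varepsilon(x_2)|\le C\delta\cdot d(x_1,x_2)$. If instead $d(x_1,x_2)\ge\lambda_0$, the inclusion $\Sigma_{v,t}\subset B_\varepsilon(S_t)$ forces $r_\varepsilon(x_i)\in[0,2\varepsilon]$, hence $|r_\varepsilon(x_1)-r_\varepsilon(x_2)|\le 2\varepsilon\le (2\varepsilon/\lambda_0)\,d(x_1,x_2)$. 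Choosing $\delta_2(\varepsilon|d,t_0):=C\delta(\varepsilon|d,t_0)+2\varepsilon/\lambda_0(t_0)$ then gives the claimed estimate, and $\delta_2\to 0$ as $\varepsilon\to 0$.

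The main obstacle I anticipate is ensuring that the lift of the $S_{t-\varepsilon}$-geodesic actually terminates at $x_2$, rather than at a different preimage of $\pi_\varepsilon(x_2)$ on $\Sigma_{v,t}$. Since $\Sigma_{v,t}$ may have nonzero genus while $S_{t-\varepsilon}$ is a sphere, $\pi_\varepsilon|_{\Sigma_{v,t}}$ cannot be globally injective, only a local diffeomorphism under our uniform angle condition. The constraint $d(x_1,x_2)\le\lambda_0$ together with the Hausdorff inclusion $\Sigma_{v,t}\subset B_\varepsilon(S_t)$ should ensure that distinct preimages of $\pi_\varepsilon(x_2)$ on $\Sigma_{v,t}$ are separated by a distance much larger than $\lambda_0$, so that the lift remains on the same local sheet. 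Making this \emph{same-sheet} argument fully rigorous, uniformly over $v$ and $t$ in the allowed range, is the main bookkeeping step in the proof.
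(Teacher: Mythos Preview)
Your far-regime argument ($d(x_1,x_2)\ge\lambda_0$, using $|r_\varepsilon|\le 2\varepsilon$) is correct and clean. The gap is in the close regime. You assert that the one-sided supporting ball $B_t(p_1)$ makes $\Sigma_{v,t}$ locally a graph over its tangent plane with uniformly bounded second fundamental form, and then integrate the tangential gradient of $\rho$ along a lifted curve. But an interior-ball condition alone does \emph{not} give $C^{1,1}$, or even $C^1$: the level set $\Sigma_{v,t}=\partial\{d_v<t\}$ can meet the cut locus of $\Sigma_v$, producing genuine ridges (think of the boundary of the union of two overlapping balls). At such a ridge there is no tangent plane, no well-defined $N_2$, and no curve on $\Sigma_{v,t}$ along which the fundamental theorem of calculus applies. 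The angle hypothesis only forces every supporting-ball normal at $x$ to lie in a $\delta$-cone around $N_1(x)$; this bounds the opening angle of a ridge but does not remove it. One can indeed deduce from this that $\Sigma_{v,t}$ is a Lipschitz graph over $S_{t-\varepsilon}$ with small Lipschitz constant, but that is exactly Corollary~\ref{Lipschitz estimate}, which the paper derives \emph{from} the present lemma; invoking it here would be circular.

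The paper sidesteps the regularity issue with a blowup argument that uses only the two supporting balls at $x_1,x_2$, never a tangent plane of $\Sigma_{v,t}$. If the estimate fails along $\varepsilon_k\to0$, the bound $|r_{\varepsilon_k}|\le 2\varepsilon_k$ forces $\rho_k:=d(x_{1k},x_{2k})\to0$; rescaling by $\rho_k^{-1}$ turns the supporting balls $B_t(p_i)$ into half-spaces $H_i$ whose boundary planes $P_i$ are both perpendicular to the common limit $N_1=N_2$. The mutual exclusions $x_j\notin\mathring B_t(p_i)$ survive in the limit and force $P_1=P_2$, contradicting $\operatorname{dist}(x_2,P_1)\ge\delta_0$. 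Your close-regime argument can be repaired along the same lines without any differentiability of $\Sigma_{v,t}$: use $x_2\notin\mathring B_t(p_1)$ and $x_1\notin\mathring B_t(p_2)$ directly as second-order constraints on the $N_2(x_i)$-heights, and convert to $N_1$-heights via the $\delta$-angle bound.
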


\begin{proof}
Suppose the estimate fails.  
Then there exist sequences $\varepsilon_k\to 0$ and points $x_{ik}\in B_{\varepsilon_k}(S_t)$ such that
\[
|r_{\varepsilon_k}(x_{1k})-r_{\varepsilon_k}(x_{2k})|\ge \delta_0\, d(x_{1k},x_{2k})
\]
for some fixed $\delta_0>0$.  
Set $\rho_k=d(x_{1k},x_{2k})\to 0$ and rescale by $\rho_k$.  
In the limit, the supporting balls converge to supporting half-spaces $H_i$ with boundary planes $P_i=\partial H_i$.  
The contact conditions $\max\theta(x_i)\le\delta(\varepsilon_k|d,t_0)\to 0$ gives $N_1(x_i)=N_2(x_i)$, while the inequality above implies that the distance from $x_2$ to $P_1$ is at least $\delta_0$.  
Since $P_1$ and $P_2$ are parallel and both support the same limiting set, this forces $H_1=H_2$, hence $P_1=P_2$, which is a contradiction.
\end{proof}

This immediately leads to a Lipschitz estimate for the projection. 

\begin{corollary}\label{Lipschitz estimate}
Under the hypotheses of Lemma \ref{two point cone}, there holds
$$d(x_1,x_2)\le(1+\delta_3(\varepsilon|d,t_0))d(\pi_\varepsilon(x_1),\pi_\varepsilon(x_2)),$$
 where $\lim_{\varepsilon\to 0}\delta_3(\varepsilon|d,t_0)=0$.
\end{corollary}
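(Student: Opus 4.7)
The plan is to compare the two spherical distances via the cosine law in $\mathbb{S}^{3}$, extract the radial discrepancy controlled by Lemma~\ref{two point cone}, and absorb the remaining cross term using that the comparison radius stays away from the degenerate configuration. Since $S=G_{v}^{-1}(T)$ is a geodesic sphere in $\mathbb{S}^{3}$, its parallel family $\{S_{\tau}\}$ consists of concentric geodesic spheres about a common center $c\in\mathbb{S}^{3}$. Writing $q_{i}:=q_{-\varepsilon,i}$, $r_{i}:=r_{\varepsilon}(x_{i})$, $s:=r_{0}-\varepsilon+t$ (with $r_{0}$ the spherical radius of $S$ from $c$), and choosing unit vectors $u_{i}\perp c$ with $q_{i}=\cos(r_{0}-\varepsilon)\,c+\sin(r_{0}-\varepsilon)\,u_{i}$, the definitions of $x_{i}$ and $\pi_{\varepsilon}$ yield
\[
x_{i}=\cos(s+r_{i})\,c+\sin(s+r_{i})\,u_{i},\qquad y_{i}:=\pi_{\varepsilon}(x_{i})=\cos s\,c+\sin s\,u_{i}.
\]

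I would then compute $\cos d(x_{1},x_{2})=x_{1}\cdot x_{2}$ and $\cos d(y_{1},y_{2})=y_{1}\cdot y_{2}$ as inner products in $\mathbb{R}^{4}$, and apply product-to-sum together with $2\sin^{2}(\alpha/2)=1-\cos\alpha$ to obtain
\[
2\sin^{2}\!\tfrac{d(x_{1},x_{2})}{2}-2\sin^{2}\!\tfrac{d(y_{1},y_{2})}{2}=(1+u_{1}{\cdot}u_{2})\sin^{2}\!\tfrac{r_{1}-r_{2}}{2}+(1-u_{1}{\cdot}u_{2})\sin\!\bigl(2s+\tfrac{r_{1}+r_{2}}{2}\bigr)\sin\!\tfrac{r_{1}+r_{2}}{2},
\]
together with the key identity $1-u_{1}\cdot u_{2}=2\sin^{2}(d(y_{1},y_{2})/2)/\sin^{2}s$, read off directly from $\cos d(y_{1},y_{2})=\cos^{2}s+\sin^{2}s\,(u_{1}\cdot u_{2})$. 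The first right-hand term is bounded by $(r_{1}-r_{2})^{2}/2\le\tfrac12\delta_{2}^{2}d(x_{1},x_{2})^{2}$ using Lemma~\ref{two point cone}. For the second, substituting the identity and using $|r_{i}|\le 2\varepsilon$ together with $|\sin\alpha|\le|\alpha|$ gives a bound of the form $(2|\sin 2s|/\sin^{2}s)\,\varepsilon\,\cdot 2\sin^{2}(d(y_{1},y_{2})/2)$; since the hypotheses $|t|\ge t_{0}$ and $\mathrm{diam}(S_{t})\ge d$ confine $s$ to a compact subinterval of $(0,\pi)$, the prefactor $|\sin 2s|/\sin^{2}s$ is bounded by some $C(t_{0},d)$. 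Combining, $(1-C\delta_{2}^{2})\cdot 2\sin^{2}(d(x_{1},x_{2})/2)\le(1+C(t_{0},d)\varepsilon)\cdot 2\sin^{2}(d(y_{1},y_{2})/2)$, and since $\sin(\cdot/2)$ is a monotone bijection $[0,\pi]\to[0,1]$, this rearranges to $d(x_{1},x_{2})\le(1+\delta_{3})d(y_{1},y_{2})$ with $\delta_{3}=\delta_{3}(\varepsilon|d,t_{0})\to 0$ as $\varepsilon\to 0$ (using also $\delta_{2}\to 0$ from Lemma~\ref{two point cone}).

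The main obstacle is the cross term $(1-u_{1}\cdot u_{2})(r_{1}+r_{2})\sin 2s$: a priori $r_{1}+r_{2}$ is only of order $\varepsilon$ and not of order $\delta_{2}\cdot d(x_{1},x_{2})$, so naively this term could dominate $d(y_{1},y_{2})^{2}$ itself. The resolution is the exact spherical identity $1-u_{1}\cdot u_{2}=2\sin^{2}(d(y_{1},y_{2})/2)/\sin^{2}s$, which converts the apparently order-$\varepsilon$ term into an $\varepsilon$-multiple of $\sin^{2}(d(y_{1},y_{2})/2)$. This conversion is decisive and works precisely because the hypotheses keep $\sin s$ uniformly bounded below, which is the role played by the geodesic-sphere geometry of $S$ and the non-degeneracy assumption $\mathrm{diam}(S_{t})\ge d$.
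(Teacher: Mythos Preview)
Your argument is correct and reaches the same conclusion, but the route is genuinely different from the paper's.  The paper proceeds geometrically: it introduces the auxiliary point $\hat x_2=\exp_{\pi_\varepsilon(x_2)}\!\bigl(r_\varepsilon(x_1)\,N(\pi_\varepsilon(x_2))\bigr)$, so that $x_1$ and $\hat x_2$ lie on the \emph{same} parallel sphere $S_{t-\varepsilon+r_\varepsilon(x_1)}$.  Then $d(x_2,\hat x_2)=|r_\varepsilon(x_1)-r_\varepsilon(x_2)|\le\delta_2\,d(x_1,x_2)$ by Lemma~\ref{two point cone}, the triangle inequality gives $d(x_1,x_2)\le(1-\delta_2)^{-1}d(x_1,\hat x_2)$, and finally $d(x_1,\hat x_2)\le(1+C_d\varepsilon)\,d(\pi_\varepsilon(x_1),\pi_\varepsilon(x_2))$ because the two concentric spheres carrying $\{x_1,\hat x_2\}$ and $\{\pi_\varepsilon(x_1),\pi_\varepsilon(x_2)\}$ differ in radius by at most $2\varepsilon$.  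Your approach instead exploits the fact that the parallel family $\{S_\tau\}$ is a pencil of concentric geodesic spheres about a fixed $c\in\mathbb{S}^3$, writes everything in explicit polar coordinates, and derives an exact identity for $\sin^2(d(x_1,x_2)/2)-\sin^2(d(y_1,y_2)/2)$; the key step is the substitution $1-u_1\!\cdot u_2=2\sin^2(d(y_1,y_2)/2)/\sin^2 s$, which you correctly identify as converting the dangerous cross term into a harmless $O(\varepsilon)$ multiple of $\sin^2(d(y_1,y_2)/2)$.  Two small remarks: (i) in the cross term the factor is $|\sin(2s+(r_1+r_2)/2)|$, not $|\sin 2s|$, but bounding it by $1$ already suffices since $\sin s$ is uniformly bounded below; (ii) the final passage from $\sin(d(x_1,x_2)/2)\le K\sin(d(y_1,y_2)/2)$ to $d(x_1,x_2)\le(1+\delta_3)d(y_1,y_2)$ does \emph{not} follow from monotonicity of $\sin(\cdot/2)$ alone---one needs the elementary (but nontrivial) fact that $\sup\{A/B:\sin A\le K\sin B,\ A,B\in(0,\pi/2]\}\to 1$ as $K\to1$, which holds with rate $O(\sqrt{K-1})$.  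The paper's auxiliary-point argument sidesteps this last issue entirely by working directly with distances, and yields the cleaner rate $\delta_3=O(\delta_2+\varepsilon)$; your computational route gives $\delta_3=O(\sqrt{\delta_2^2+\varepsilon})$, which is equally sufficient for the stated corollary.
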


\begin{proof}
Assume without loss of generality that $r_\varepsilon(x_2)\ge r_\varepsilon(x_1)$. Define the auxiliary point
$$\hat{x}_2 = \exp_{\pi_\varepsilon(x_2)}(r_\varepsilon(x_1)N_2(\pi_\varepsilon(x_2))).$$
Then $\hat{x}_2$ lies on the same level set as $x_1$ (i.e., $S_{t-\varepsilon+r_\varepsilon(x_1)}$), and we have
$$|r_\varepsilon(x_1)-r_\varepsilon(x_2)| = d(x_2,\hat{x}_2).$$

By Lemma \ref{two point cone}, we know 
$$d(x_2,\hat{x}_2) \le \delta_2(\varepsilon|d,t_0) d(x_1,x_2).$$

The triangle inequality implies
\begin{align*}
d(x_1,x_2) &\le d(x_1,\hat{x}_2) + d(\hat{x}_2,x_2) \\
&\le d(x_1,\hat{x}_2) + \delta_2 d(x_1,x_2).
\end{align*}
Rearranging gives
$$d(x_1,x_2) \le \frac{1}{1-\delta_2} d(x_1,\hat{x}_2).$$

Now estimate $d(x_1,\hat{x}_2)$. Both points lie on the parallel surface $S_{t-\varepsilon+r_\varepsilon(x_1)}$, while their projections $\pi_\varepsilon(x_1)$ and $\pi_\varepsilon(x_2)$ lie on $S_{t-\varepsilon}$. Since $\mathrm{diam}(S_t) \ge d$, the metric distortion is controlled:
$$d(x_1,\hat{x}_2) \le (1 + C_d \cdot 2\varepsilon) d(\pi_\varepsilon(x_1),\pi_\varepsilon(x_2)).$$

Combining these estimates:
\begin{align*}
d(x_1,x_2) &\le \frac{1 + 2C_d\varepsilon}{1 - \delta_2} d(\pi_\varepsilon(x_1),\pi_\varepsilon(x_2)) \\
&= (1 + \delta_3(\varepsilon|d,t_0)) d(\pi_\varepsilon(x_1),\pi_\varepsilon(x_2)),
\end{align*}
where $\delta_3 = \frac{1 + 2C_d\varepsilon}{1 - \delta_2} - 1 \to 0$ as $\varepsilon \to 0$.
\end{proof}

Finally, we obtain the area estimate.
\begin{corollary} \label{area estimate for middle t}
Assume $\Sigma \subset \mathbb{S}^3$ be $(\gamma,r_0)$-regular with $\gamma \le \gamma(\varepsilon)$ and $\mathcal{W}(\Sigma) \le 8\pi$. Then for $|t|\geq t_0$ and $|v|\ge 1-\eta(\varepsilon)$, $\Sigma_{(v,t)}$ is a Lipschitz graph over $S_{t-\varepsilon}$ with Lipschitz constant $\le 1+\delta_3$ and:
$$\mathcal{H}^2(\Sigma_{(v,t)})\le 4\pi(1+\delta_3)^2.$$
\end{corollary}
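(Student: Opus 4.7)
The plan is to realise $\Sigma_{(v,t)}$ as a Lipschitz graph over the parallel sphere $S_{t-\varepsilon}$ through the projection $\pi_\varepsilon$, and then read off the area estimate directly from the area formula. First, Lemma~\ref{position estimate} guarantees, under the standing hypotheses $\gamma\le\gamma(\varepsilon)$ and $|v|\ge 1-\eta(\varepsilon)$, that $\Sigma_{(v,t)}\subset B_\varepsilon(S_t)$; in particular $r_\varepsilon(x)\in[0,2\varepsilon]$ and $\pi_\varepsilon(x)\in S_{t-\varepsilon}$ are well defined for every $x\in\Sigma_{(v,t)}$. I would fix a small constant $d>0$ once and for all and split the argument according to whether $\mathrm{diam}(S_t)\ge d$ or not.

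In the principal regime $\mathrm{diam}(S_t)\ge d$, Lemma~\ref{support sphere and angle estimate} supplies, at each $x\in\Sigma_{(v,t)}$, a supporting geodesic ball $B_t(p)$ whose outward normal differs from that of $S_t$ by an angle at most $\delta(\varepsilon|d,t_0)\to 0$ as $\varepsilon\to 0$. Feeding this angle bound into Corollary~\ref{Lipschitz estimate} gives the global Lipschitz comparison
\[
d(x_1,x_2)\le (1+\delta_3(\varepsilon|d,t_0))\,d(\pi_\varepsilon(x_1),\pi_\varepsilon(x_2))\qquad\text{for all } x_1,x_2\in\Sigma_{(v,t)},
\]
which makes $\pi_\varepsilon$ injective with $(1+\delta_3)$-Lipschitz inverse on its image. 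Combined with the Lipschitz control of $r_\varepsilon$ from Lemma~\ref{two point cone}, this identifies $\Sigma_{(v,t)}$ with the normal-exponential graph of $r_\varepsilon\circ\pi_\varepsilon^{-1}$ over $\pi_\varepsilon(\Sigma_{(v,t)})\subset S_{t-\varepsilon}$ with graph-Lipschitz constant at most $1+\delta_3$. The area formula applied to $\pi_\varepsilon^{-1}$ then yields
\[
\mathcal H^2(\Sigma_{(v,t)})\le (1+\delta_3)^2\,\mathcal H^2(\pi_\varepsilon(\Sigma_{(v,t)}))\le (1+\delta_3)^2\,\mathcal H^2(S_{t-\varepsilon})\le 4\pi(1+\delta_3)^2,
\]
since every geodesic sphere in $\mathbb S^3$ has area at most $4\pi$.

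In the complementary degenerate regime $\mathrm{diam}(S_t)<d$, Lemma~\ref{position estimate} confines $\Sigma_{(v,t)}$ to a geodesic ball of radius $O(d+\varepsilon)$ in $\mathbb S^3$; for sufficiently small $d$, a Euclidean-style monotonicity comparison together with the Heintze--Karcher bound $\mathcal H^2(\Sigma_{(v,t)})\le\mathcal W(\Sigma)\le 8\pi$ forces the area of $\Sigma_{(v,t)}$ well below $4\pi(1+\delta_3)^2$, so the estimate holds trivially.

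The main content of the corollary is already encoded in Lemmas~\ref{position estimate}--\ref{two point cone} and Corollary~\ref{Lipschitz estimate}, so the present step is essentially a packaging statement. The only point that deserves a brief sanity check is that the bi-Lipschitz bound is genuinely \emph{global}---this is the content of Corollary~\ref{Lipschitz estimate} and requires no further work---after which the graph representation and the area estimate follow immediately; the one mild annoyance is the separate handling of $\mathrm{diam}(S_t)\lesssim 0$, but since the area bound there is trivial this presents no real obstacle.
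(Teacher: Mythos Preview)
Your principal argument---using Corollary~\ref{Lipschitz estimate} to see that $\pi_\varepsilon$ is injective with $(1+\delta_3)$-Lipschitz inverse, and then applying the area formula together with $\mathcal H^2(S_{t-\varepsilon})\le 4\pi$---is exactly the paper's proof, which consists of a single sentence invoking Corollary~\ref{Lipschitz estimate}.

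Your additional case split on $\mathrm{diam}(S_t)$ is unnecessary here: the corollary sits in the subsection whose standing hypothesis is $\mathrm{diam}(\Sigma_{(v,t)})\sim 1$ (equivalently $\mathrm{diam}(S_t)\ge d$), and this is how it is used in the proof of Theorem~\ref{uniformly area estimate}, where one applies it only for $t\le t_1+\tau$, the range where $\mathrm{diam}(S_t)\ge d/2$. The degenerate regime $\mathrm{diam}(S_t)<d$ is handled separately in the paper via Lemma~\ref{area comparison} in the next subsection. Moreover, your sketch for that case is not quite right: the Heintze--Karcher bound only gives $\mathcal H^2(\Sigma_{(v,t)})\le\mathcal W(\Sigma)\le 8\pi$, which by itself exceeds $4\pi(1+\delta_3)^2$, so an unspecified ``monotonicity comparison'' does not settle it; and when $S_t$ collapses to a point the graph statement itself is ill-posed. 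Simply drop that case and the argument is complete.
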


\begin{proof}
The projection $\pi_\varepsilon:\Sigma_{(v,t)}\to S_{t-\varepsilon}$ is bijective, and its inverse $\psi$ is Lipschitz with constant $\le 1+\delta_3$ by Corollary \ref{Lipschitz estimate}. The area bound follows.
\end{proof}

\subsection{Estimate the area for $|t|$ away from  $0$  with $\textup{diam}(\Sigma_{(v,t)})\ll 1$.}

We need the following area comparison theorem, whose proof we include for completeness.
\begin{lemma} \label{area comparison}
Let $\Sigma$ be a surface in $\mathbb{S}^3$. Then for any $0<\tau< t\le \pi$ and $|v|<1$:
$$\mathcal{H}^2(\Sigma_{(v,t)})\le\left(\frac{\sin t}{\sin \tau}\right)^2\mathcal{H}^2(\Sigma_{(v,\tau)}).$$
\end{lemma}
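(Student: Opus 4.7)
The plan is to apply the area formula to the normal–flow map
\(\phi:\Sigma_{(v,\tau)}\to\mathbb{S}^3\) defined by
\(\phi(z)=\cos(t-\tau)\,z+\sin(t-\tau)\,N_\tau(z)\),
where \(N_\tau(z)\) is the unit normal to \(\Sigma_{(v,\tau)}\) at \(z\)
pointing into \(\{d_v>\tau\}\).  First I would check the inclusion
\(\Sigma_{(v,t)}\subset\phi(\Sigma_{(v,\tau)})\):
for any \(x=\varphi_t(y)\in\Sigma_{(v,t)}\) with \(y\in\Sigma_v^*(t)\),
the triangle inequality gives \(y\in\Sigma_v^*(\tau)\) (if some
\(y'\) were closer to \(z'=\varphi_\tau(y)\) than \(y\), then
\(d(\varphi_t(y),y')<t\), contradicting \(d(\varphi_t(y),\Sigma_v)=t\)),
so \(z=\varphi_\tau(y)\in\Sigma_{(v,\tau)}\) with outward normal
\(N_\tau(z)=-\sin\tau\,y+\cos\tau\,N_v(y)\); the angle–addition identities
then give \(\phi(z)=\cos t\,y+\sin t\,N_v(y)=\varphi_t(y)=x\).

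Next, I would compute the Jacobian of \(\phi\).  Since
\(\varphi_t=\phi\circ\varphi_\tau\) on \(\Sigma_v^*(t)\), the chain rule
combined with the spherical parallel–Jacobian identity
\[
J_{\varphi_s}(y)=(\cos s+\lambda_1\sin s)(\cos s+\lambda_2\sin s)
             =\sin^2 s\,(\cot s+\lambda_1)(\cot s+\lambda_2)
\]
(already used in the proof of Theorem~\ref{almost minimizing}) gives,
at \(z=\varphi_\tau(y)\),
\[
J_\phi(z)=\frac{J_{\varphi_t}(y)}{J_{\varphi_\tau}(y)}
  =\frac{\sin^2 t}{\sin^2\tau}\,
    \prod_{i=1}^{2}\frac{\cot t+\lambda_i}{\cot\tau+\lambda_i}.
\]
The key pointwise estimate is that each ratio in the product lies in
\([0,1]\).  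Indeed, the condition \(y\in\Sigma_v^*(t)\) forces the normal
geodesic from \(y\) to be length–minimizing up to parameter \(t\), hence
\(t\) cannot exceed the first focal distance along either principal
direction, i.e.\ \(t\le\operatorname{arccot}(-\lambda_i)\), or equivalently
\(\cot t+\lambda_i\ge 0\) for \(i=1,2\).  Since \(\cot\) is strictly
decreasing on \((0,\pi)\) and \(\tau<t\), we also have
\(\cot\tau+\lambda_i>\cot t+\lambda_i\ge 0\), so each factor of the
product lies in \([0,1]\) and \(J_\phi(z)\le \sin^2 t/\sin^2\tau\).

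To conclude, \(\Sigma_{(v,\tau)}\) is rectifiable (it is the boundary of
a sublevel set of the \(1\)-Lipschitz signed distance \(d_v\)), and
\(\phi\) is Lipschitz on its regular part, which carries full
\(\mathcal{H}^2\)-measure.  The area formula with multiplicity, combined
with the inclusion and the pointwise Jacobian bound, yields
\[
\mathcal{H}^2(\Sigma_{(v,t)})
\le \mathcal{H}^2\!\bigl(\phi(\Sigma_{(v,\tau)})\bigr)
\le \int_{\Sigma_{(v,\tau)}} J_\phi\,d\mathcal{H}^2
\le \left(\frac{\sin t}{\sin\tau}\right)^{\!2}\mathcal{H}^2(\Sigma_{(v,\tau)}).
\]
The only delicate point is that the cut locus of \(\Sigma_v\) may
intersect \(\Sigma_{(v,\tau)}\) in a set of positive
\(\mathcal{H}^2\)-measure, so \(N_\tau\) need not be defined there; this
is handled by restricting to the regular part of \(\Sigma_{(v,\tau)}\),
on which \(\phi\) is Lipschitz and all three inequalities above are
preserved.
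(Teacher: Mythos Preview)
Your approach differs from the paper's: you bound the pointwise ratio $J_{\varphi_t}/J_{\varphi_\tau}$ directly via principal curvatures and the focal-radius condition, whereas the paper parametrizes $A(s)=\int_{\Sigma_v^*(t)}J_{\varphi_s}$ over the \emph{fixed} smooth domain $\Sigma_v^*(t)\subset\Sigma_v$, derives the differential inequality $A'(s)\le 2\cot s\,A(s)$ from the supporting-sphere mean-curvature bound $\langle N,H\rangle\ge -2\cot s$, and integrates from $\tau$ to $t$. Your route is more elementary (no first variation, no ODE), and the two key inputs---your $\cot t+\lambda_i\ge 0$ versus the paper's mean-curvature bound---are the principal-curvature and traced versions of the same comparison fact.

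There is, however, a real gap in your final step. The bound $J_\phi(z)\le(\sin t/\sin\tau)^2$ is established only at points $z=\varphi_\tau(y)$ with $y\in\Sigma_v^*(t)$. At a regular point $z\in\Sigma_{(v,\tau)}$ whose foot $y$ lies in $\Sigma_v^*(\tau)\setminus\Sigma_v^*(t)$---for instance when the normal geodesic from $y$ reaches a focal point at some $s_1\in(\tau,t)$---one has $\cot t+\lambda_1<0$, and an easy check shows that $\bigl|(\cot t+\lambda_1)/(\cot\tau+\lambda_1)\bigr|$ can exceed $1$ (take $s_1$ close to $\tau$). So $|J_\phi|$ can genuinely exceed $(\sin t/\sin\tau)^2$ on the regular part of $\Sigma_{(v,\tau)}$, and the inequality $\int_{\Sigma_{(v,\tau)}}J_\phi\le(\sin t/\sin\tau)^2\,\mathcal H^2(\Sigma_{(v,\tau)})$ is not justified. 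The fix is immediate and you already have the ingredients: since you showed $\Sigma_{(v,t)}\subset\phi(\Omega)$ with $\Omega:=\varphi_\tau(\Sigma_v^*(t))\subset\Sigma_{(v,\tau)}$, simply run the area formula on $\Omega$. Cleaner still---and this also dispenses with all regularity issues about $N_\tau$ and the cut locus---pull everything back to the smooth set $\Sigma_v^*(t)$: from $\mathcal H^2(\Sigma_{(v,t)})\le\int_{\Sigma_v^*(t)}J_{\varphi_t}$, your pointwise inequality $J_{\varphi_t}\le(\sin t/\sin\tau)^2 J_{\varphi_\tau}$ on $\Sigma_v^*(t)$, and the injectivity of $\varphi_\tau$ on $\Sigma_v^*(t)$ (giving $\int_{\Sigma_v^*(t)}J_{\varphi_\tau}\le\mathcal H^2(\Sigma_{(v,\tau)})$), the lemma follows.
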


\begin{proof}
The key idea is to compare the area evolution along normal geodesics. For $x \in \Sigma_{(v,t)}$ and $p \in \Sigma_v$ with $d(x,p) = d(x, \Sigma_v)$, let $\gamma_p : [0,t] \to \mathbb{S}^3$ be the minimizing geodesic. For $\tau \in [0,t)$, we have $\gamma_p(\tau) \in \Sigma_{(v,\tau)}$ and $p \in \Sigma_v^*(\tau)$.

The mapping $\varphi_\tau: \Sigma_v^*(t) \to \Sigma_{(v,\tau)}$ is clearly injective. Consider the area functional:
$$A(\tau) = \int_{\Sigma_v^*(t)} J_{\varphi_\tau}(p) d\mathcal{H}^2.$$

By the first variation formula, we know 
$$\frac{dA(\tau)}{d\tau} = -\int_{\Sigma_v^*(t)} N(\varphi_\tau(p)) \cdot H(\varphi_\tau(p)) J_{\varphi_\tau}(p)d\mathcal{H}^2.$$

The curvature estimate comes from a geometric comparison argument. For $q = \varphi_\tau(p)$, the distance function $d_p$ attains its minimum along $\Sigma_{(v,\tau)}$ at $q$, since $d_p(q) = \tau$ and $d_p(y) \ge \tau$ for nearby $y \in \Sigma_{(v,\tau)}$. This implies:
$$\langle N(\varphi_\tau(p)), H(\varphi_\tau(p))\rangle \ge -2\cot\tau.$$

Substituting into the variation formula, we get
$$\frac{dA(\tau)}{d\tau} \le 2\cot\tau A(\tau).$$

Integrating from $\tau$ to $t$ gives
$$A(t) \le \left(\frac{\sin t}{\sin \tau}\right)^2 A(\tau).$$

Since $A(t) = \mathcal{H}^2(\Sigma_{(v,t)})$ and $A(\tau) \le \mathcal{H}^2(\Sigma_{(v,\tau)})$, the result follows.
\end{proof}

We now combine these ingredients to complete the proof of
Theorem~\ref{uniformly area estimate}.

\begin{proof}[Proof of Theorem~\ref{uniformly area estimate}]
By Corollary~\ref{area estimate for small t} there exist $\gamma_0,t_0,\eta_0>0$ such that, whenever $\gamma\le \gamma_0$ and $|v|\ge 1-\eta_0$,
\[
\mathcal{H}^2(\Sigma_{(v,t)}) \le 5\pi \qquad \text{for all } |t|\le t_0.
\]
Fix $\varepsilon>0$ small. By Lemma~\ref{position estimate}, there exist $\gamma(\varepsilon)\in(0,\gamma_0]$ and $\eta(\varepsilon)\in(0,\eta_0]$ such that if $\gamma\le \gamma(\varepsilon)$ and $|v|\ge 1-\eta(\varepsilon)$, then
\[
\Sigma_{(v,t)}\subset B_\varepsilon(S_t)\qquad\text{for all } t\in(-\pi,\pi),
\]
where $B_\varepsilon(S_t)$ denotes the ambient $\varepsilon$-tubular neighborhood of the geodesic sphere $S_t\subset\mathbb S^3$.

By symmetry it suffices to treat $t\ge0$. Set
\[
d:=\sin t_0/2\in(0,1).
\]

Let $t_1\in[0,\pi)$ be the maximal number such that $\mathrm{diam}(S_t)\ge d$. By continuity, $\mathrm{diam}(S_{t_1})=d$. Choose $\tau>0$ so that
\[
\mathrm{diam}(S_{t_1+\tau})=\frac{d}{2}.
\]
Since on $[t_1,\pi)$ the function $t\mapsto \mathrm{diam}(S_t)$ is strictly decreasing and and in fact equals $\sin (t_0/2+t_1-t)$, there exists a universal $c\in(0,1)$ (e.g. $c=1/10$ suffices) such that
\begin{equation}\label{eq:tau-lower}
\tau\ge c\,t_0.
\end{equation}

Applying Corollary~\ref{area estimate for middle t} (with the localization from Lemma~\ref{position estimate}), we obtain
\begin{equation}\label{eq:middle-bound}
\mathcal{H}^2(\Sigma_{(v,t)}) \le \max\!\bigl\{4\pi\,(1+\delta_3(\varepsilon\mid t_0))^2,\,5\pi\bigr\}
\quad\text{for all } t\in[0,\max\{t_0,t_1+\tau\}],
\end{equation}
where $\delta_3(\varepsilon\mid t_0)\to0$ as $\varepsilon\to0$ with $t_0$ fixed.

For $t\in[t_1+\tau,\pi)$ we invoke Lemma~\ref{area comparison} between the levels $t$ and $t_1+\tau$ to get
\begin{equation}\label{eq:area-comp}
\mathcal{H}^2(\Sigma_{(v,t)})
\;\le\; \Bigl(\frac{\sin(t-t_1)}{\sin \tau}\Bigr)^2
\mathcal{H}^2(\Sigma_{v,t_1+\tau}).
\end{equation}
Because $t-t_1\le t_0/2+\varepsilon$ we deduce from \eqref{eq:tau-lower} that
\begin{equation}\label{eq:ratio-bound}
\Bigl(\frac{\sin(t-t_1)}{\sin \tau}\Bigr)^2
\;\le\; C_1\qquad\text{with } C_1\leq \Bigl(10\frac{t_0/2+\varepsilon}{t_0}\Bigr)^2\le 100
\end{equation}
once $\varepsilon\le t_0/2$. Moreover, applying Corollary~\ref{area estimate for middle t} at the time $t_1+\tau$ and using $\sin(t_1+\tau)\le \sin t_0/2=d\le t_0/2$, we have
\begin{equation}\label{eq:base-level}
\mathcal{H}^2(\Sigma_{v,t_1+\tau})
\;\le\; 4\pi\,(1+\delta_3(\varepsilon\mid t_0))^2\,\sin^2(t_1+\tau)
\;\le\; \pi\,(1+\delta_3(\varepsilon\mid t_0))^2\,t_0^2.
\end{equation}
Combining \eqref{eq:area-comp}, \eqref{eq:ratio-bound} and \eqref{eq:base-level} yields
\[
\mathcal{H}^2(\Sigma_{(v,t)})
\;\le\; C_1\pi\,(1+\delta_3(\varepsilon\mid t_0))^2\,t_0^2
\qquad\text{for all } t\in[t_1+\tau,\pi).
\]

Choose $t_0>0$ so small that $C_1\pi\,t_0^2\le 2\pi$ (for instance, any $t_0$ with $t_0^2\le \tfrac{1}{2C_1}$ works), and then take $\varepsilon>0$ so small that $\delta_3(\varepsilon\mid t_0)\le 10^{-2}$. With these choices we obtain
\[
\mathcal{H}^2(\Sigma_{(v,t)}) \le 5\pi
\qquad\text{for all } t\in[0,\pi).
\]
The case $t\le0$ is identical by symmetry. Finally, set $\gamma_1:=\gamma(\varepsilon)$ and $\eta_1:=\eta(\varepsilon)$ from Lemma~\ref{position estimate}. This completes the proof.
\end{proof}

\section{Global Conformal Parametrization and Smooth Approximation}

In this section we globalize the local conformal parametrization
Theorem \cite[Thm.~1.1]{BZ-2022b} and prove that any compactly supported unit-density
integral $2$-varifold with $L^2$-mean curvature admits a global conformal
parametrization by a compact smooth surface, and can be approximated in
$W^{2,2}$ by smooth embedded surfaces.  A simple but useful observation is
that, near a good rigid approximation, one can smooth by convolution while
retaining quantitative $W^{1,\infty}$-control, as recorded in the following
local lemma.

\begin{lemma}[Local mollification near a rigid map]
\label{lem:local-mollification}
Let $B_{2\varepsilon}(x_0)\subset\mathbb{R}^2$, let
$f\in W^{2,2}(B_{2\varepsilon}(x_0),\mathbb{R}^n)$, and assume that there
exists a rigid isometry $P_{x_0,2\varepsilon}:\mathbb{R}^2\to\mathbb{R}^n$
such that
\[
\varepsilon^{-1}\|f-P_{x_0,2\varepsilon}\|_{L^2(B_{2\varepsilon}(x_0))}
 +\|\nabla^2(f-P_{x_0,2\varepsilon})\|_{L^2(B_{2\varepsilon}(x_0))}\le \eta.
\]
Let $\rho_\varepsilon$ be a standard mollifier and set
$f_\varepsilon:=f*\rho_\varepsilon$ on $B_{\varepsilon}(x_0)$.  Then
\begin{equation}\label{eq:moll-W1inf}
\varepsilon^{-1}\|f_\varepsilon-P_{x_0,2\varepsilon}\|_{L^\infty(B_{\varepsilon}(x_0))}
+\|\nabla(f_\varepsilon-P_{x_0,2\varepsilon})\|_{L^\infty(B_{\varepsilon}(x_0))}
 \le C\,\eta,
\end{equation}
where $C$ depends only on the mollifier kernel.
\end{lemma}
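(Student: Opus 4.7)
The plan rests on the observation that a symmetric mollifier acts as the identity on affine maps. Setting $g := f - P_{x_0,2\varepsilon}$, since $\rho_\varepsilon$ is radial with unit mass and $P_{x_0,2\varepsilon}$ is affine, a direct computation using $\int\rho_\varepsilon = 1$ and the vanishing first moments gives $P_{x_0,2\varepsilon}*\rho_\varepsilon = P_{x_0,2\varepsilon}$ wherever the convolution is defined; in particular this holds on $B_\varepsilon(x_0)$ because $B_\varepsilon(x)\subset B_{2\varepsilon}(x_0)$ for every such $x$. Consequently $f_\varepsilon - P_{x_0,2\varepsilon} = g*\rho_\varepsilon$ on $B_\varepsilon(x_0)$, and \eqref{eq:moll-W1inf} becomes a pointwise estimate on $g*\rho_\varepsilon$ and its gradient.

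The pointwise bounds will come from Cauchy--Schwarz applied to the convolution, using the two-dimensional scaling $\|\rho_\varepsilon\|_{L^2(\mathbb{R}^2)}=\varepsilon^{-1}\|\rho\|_{L^2}$. For every $x\in B_\varepsilon(x_0)$ this yields $|g*\rho_\varepsilon(x)|\le C\varepsilon^{-1}\|g\|_{L^2(B_{2\varepsilon}(x_0))}$ and, through $\nabla(g*\rho_\varepsilon)=(\nabla g)*\rho_\varepsilon$, the analogous bound $|\nabla(g*\rho_\varepsilon)(x)|\le C\varepsilon^{-1}\|\nabla g\|_{L^2(B_{2\varepsilon}(x_0))}$. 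The hypothesis already controls the first $L^2$-norm; to control the second I would invoke the standard Poincar\'e/interpolation inequality on a ball, derived by integrating by parts against $\chi^2 g$ with a cutoff $\chi$ equal to one on $B_{3\varepsilon/2}$ and supported in $B_{2\varepsilon}$:
\[
\|\nabla g\|_{L^2(B_{3\varepsilon/2})}^2 \le C\bigl(\|g\|_{L^2(B_{2\varepsilon})}\|\nabla^2 g\|_{L^2(B_{2\varepsilon})} + \varepsilon^{-2}\|g\|_{L^2(B_{2\varepsilon})}^2\bigr).
\]

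The main obstacle is the scaling bookkeeping forced by the weighted left-hand side of \eqref{eq:moll-W1inf}. Direct Cauchy--Schwarz produces only $\|g*\rho_\varepsilon\|_{L^\infty(B_\varepsilon)}\le C\eta$, so to promote this to the required $\varepsilon^{-1}\|g*\rho_\varepsilon\|_{L^\infty}\le C\eta$ one must extract an additional factor of $\varepsilon$ from $\|g\|_{L^2(B_{2\varepsilon})}$, and similarly for $\|\nabla g\|_{L^2(B_{2\varepsilon})}$. In the paper's construction, $P_{x_0,2\varepsilon}$ is adapted so that (up to constants) it realizes the best affine approximation to $f$ on $B_{2\varepsilon}(x_0)$; this forces the zeroth- and first-order averages of $g$ to vanish, whence an iterated Poincar\'e inequality yields $\|g\|_{L^2(B_{2\varepsilon})}\le C\varepsilon^2\|\nabla^2 g\|_{L^2(B_{2\varepsilon})}$ and $\|\nabla g\|_{L^2(B_{2\varepsilon})}\le C\varepsilon\|\nabla^2 g\|_{L^2(B_{2\varepsilon})}$, which closes all the estimates. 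Verifying this optimality property of $P_{x_0,2\varepsilon}$, or justifying a harmless replacement of it by the true best $L^2$ affine fit at negligible cost in the hypothesis, is the delicate point I expect to be the crux of the argument.
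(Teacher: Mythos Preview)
Your approach---set $g=f-P$, use that affine maps are fixed by mollification, then bound $g*\rho_\varepsilon$ and $(\nabla g)*\rho_\varepsilon$ via Cauchy--Schwarz together with an interpolation inequality for $\|\nabla g\|_{L^2}$---is exactly the paper's argument. The paper phrases it as ``scale to $\varepsilon=1$, apply Gagliardo--Nirenberg, then Young's inequality for convolutions,'' but the content is identical.

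Your scaling concern is not a delicate point to be resolved by extra structure of $P$; it is a genuine defect of the \emph{statement}. Take $g\equiv c$ with $c=\eta/(2\sqrt\pi)$: then $\varepsilon^{-1}\|g\|_{L^2(B_{2\varepsilon})}+\|\nabla^2 g\|_{L^2}\le\eta$ holds, yet $g_\varepsilon\equiv c$ and $\varepsilon^{-1}\|g_\varepsilon\|_{L^\infty}=c/\varepsilon$ is unbounded as $\varepsilon\to0$. So the lemma as written is false, and no optimality property of $P$ will save it, since the hypothesis does not assume any. The paper's own ``scale to $\varepsilon=1$'' step is where this is hidden: the natural rescaling $\tilde g(y)=\varepsilon^{-1}g(\varepsilon y)$ turns the conclusion into $\|\tilde g_1\|_{L^\infty}+\|\nabla\tilde g_1\|_{L^\infty}\le C\eta$ as desired, but it turns the hypothesis into $\varepsilon^{-2}\|g\|_{L^2}+\|\nabla^2 g\|_{L^2}\le\eta$, not $\varepsilon^{-1}$.

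The fix is simply to replace $\varepsilon^{-1}$ by $\varepsilon^{-2}$ in the hypothesis. With that change your argument (and the paper's) goes through verbatim: $\|g\|_{L^2}\le\varepsilon^2\eta$ gives $\|g_\varepsilon\|_{L^\infty}\le C\varepsilon\eta$, and your interpolation inequality gives $\|\nabla g\|_{L^2}\le C\varepsilon\eta$, hence $\|\nabla g_\varepsilon\|_{L^\infty}\le C\eta$. In every application in the paper the rigid approximation comes from a pointwise bound $|f-P|\le\psi\,\varepsilon$ (Theorem~\ref{thm:local-rigidity}), which yields $\|f-P\|_{L^2(B_{2\varepsilon})}\le C\psi\,\varepsilon^2$, so the corrected hypothesis is always available. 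Your detour through best-affine-fit Poincar\'e inequalities would also supply the missing factor of $\varepsilon$, but it is unnecessary once the exponent is corrected.
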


\begin{proof}
Set $g:=f-P_{x_0,2\varepsilon}$.  By translation and scaling we may assume
$x_0=0$, $\varepsilon=1$, so we work on $B_2$ and
\[
\|g\|_{L^2(B_2)}+\|\nabla^2 g\|_{L^2(B_2)}\le C\,\eta.
\]
A standard Gagliardo-Nirenberg interpolation inequality yields
\[
\|\nabla g\|_{L^2(B_2)}
 \le C\,\|g\|_{L^2(B_2)}^{1/2}\,\|\nabla^2 g\|_{L^2(B_2)}^{1/2}
 \le C\,\eta.
\]

Let $\rho$ be a fixed unit-scale mollifier and define $g_1:=g*\rho$ on $B_1$.
Young‘s inequality for convolutions gives
\[
\|g_1\|_{L^\infty(B_1)}
 \le \|g\|_{L^2(B_2)}\,\|\rho\|_{L^2}\le C\,\eta,\qquad
\|\nabla g_1\|_{L^\infty(B_1)}
 \le \|\nabla g\|_{L^2(B_2)}\,\|\nabla\rho\|_{L^2}\le C\,\eta.
\]
Undoing the scaling back to $B_\varepsilon(x_0)$, and using that
$g_\varepsilon:=g*\rho_\varepsilon$ is the rescaled version of $g_1$, we
obtain
\[
\varepsilon^{-1}\|g_\varepsilon\|_{L^\infty(B_\varepsilon(x_0))}
+\|\nabla g_\varepsilon\|_{L^\infty(B_\varepsilon(x_0))}
 \le C\,\eta.
\]
Since $g_\varepsilon=f_\varepsilon-P_{x_0,2\varepsilon}$, this is exactly
\eqref{eq:moll-W1inf}.
\end{proof}

The next step is to combine the above mollification lemma with the local
conformal parametrization theory of \cite{BZ-2022b}.  We now record the resulting unified local $W^{2,2}$ regularity and rigid approximation statement.

\begin{theorem}[Unified local $W^{2,2}$ regularity and rigid approximation]
\label{thm:local-rigidity}
Let $V=\underline{v}(\Sigma,1)$ be an integral $2$-varifold in 
$U\supset B_1\subset\mathbb{R}^n$ with unit density and generalized mean 
curvature $H\in L^2(d\mu)$, and assume $0\in\Sigma=\operatorname{supp}V$.
Suppose that for some sufficiently small $\varepsilon>0$,
\begin{equation}\label{eq:density-small}
\frac{\mu(B_1)}{\pi}\le 1+\varepsilon,
\qquad
\int_{B_1}|H|^2\,d\mu\le\varepsilon .
\end{equation}
Then there exist:
\begin{itemize}
\item a bi-Lipschitz conformal parametrization 
      \( f:D_1\to\Sigma \) with \(f(0)=0\), satisfying 
      \( \Sigma\cap B(0,1-\psi)\subset f(D_1)\);
\item a function \( w\in W^{1,2}(D_1)\cap C^0(D_1) \) with
\[
df\otimes df=e^{2w}g_{\mathbb{R}^2},
\qquad
\|w\|_{C^0(D_1)}
+\|\nabla w\|_{L^2(D_1)}
+\|\nabla^2 w\|_{L^1(D_1)}
\le \psi(\varepsilon),
\]
\end{itemize}
where $\psi(\varepsilon)\to0$ as $\varepsilon\to0$.

Moreover, the following quantitative affine approximation holds at every
centre and scale:

\medskip
\noindent\textbf{(Local rigid approximation).}
For every $D(x,r)\subset D_1$ there exists a rigid affine map
\[
P_{x,r}(y)=T_{x,r}(y-x)+b_{x,r},
\]
with $T_{x,r}:\mathbb{R}^2\to\mathbb{R}^n$ a linear isometric embedding, such
that
\begin{equation}\label{eq:rigid-W22}
\sup_{D(x,r)}|f-P_{x,r}|
\;+\;
r\,\|\nabla(f-P_{x,r})\|_{L^\infty(D(x,r))}
\;+\;
r\|\nabla^2 f\|_{L^2(D(x,r))}
\;\le\; \psi(\varepsilon)\, r .
\end{equation}

In particular, $f$ is $(1+\psi(\varepsilon))$–bi-Lipschitz and
\[
\|\nabla^2 f\|_{L^2(D_1)}\le \psi(\varepsilon).
\]
\end{theorem}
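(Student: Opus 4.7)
The plan is to derive this theorem by combining \cite[Thm.~1.1]{BZ-2022b} with a rescaling argument. The hypotheses \eqref{eq:density-small} are scale-invariant under the monotonicity formula for varifolds with $L^{2}$ mean curvature (area ratios and total $\int|H|^{2}\,d\mu$ both behave well under rescaling), so the basic local existence statement can be applied not only at scale $1$ but at every interior sub-ball $B_{r}(f(x))\subset B_{1}$, and the rigid approximation at every scale can be read off from the geometric content of that local statement.

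First, at the unit scale, I apply \cite[Thm.~1.1]{BZ-2022b} directly to $V$: this produces the bi-Lipschitz conformal parametrization $f:D_{1}\to\Sigma$ with $f(0)=0$, the coverage $\Sigma\cap B(0,1-\psi)\subset f(D_{1})$, and the conformal factor bounds of the first two bullets. Conformality gives the immersion identity $\Delta f=e^{2w}H$ on $D_{1}$ (flat Laplacian), whence
\[
\int_{D_{1}}|\Delta f|^{2}\,dx=\int_{D_{1}}e^{4w}|H|^{2}\,dx\le e^{2\|w\|_{\infty}}\int_{\Sigma}|H|^{2}\,d\mu\le C\varepsilon.
\]
For the rigid approximation at scale $r$ around a point $x$, I set $y=f(x)$; monotonicity yields $\mu(B_{r}(y))/(\pi r^{2})\le 1+\psi(\varepsilon)$ and $\int_{B_{r}(y)}|H|^{2}\,d\mu\le\varepsilon$, so the rescaled varifold $V_{y,r}:=r^{-1}(V-y)$ on $B_{1}$ again satisfies \eqref{eq:density-small}. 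Applying \cite[Thm.~1.1]{BZ-2022b} to $V_{y,r}$ produces a bi-Lipschitz conformal chart $\tilde f:D_{1}\to r^{-1}(\Sigma-y)\cap B_{1}$; moreover the geometric content of that theorem places the rescaled surface inside a $\psi(\varepsilon)$-tube around a linear $2$-plane $\Pi_{x,r}\subset\mathbb{R}^{n}$ and exhibits it as a Lipschitz graph over $\Pi_{x,r}$ with constant $\psi(\varepsilon)$. I choose a linear isometric embedding $T_{x,r}:\mathbb{R}^{2}\to\mathbb{R}^{n}$ with range $\Pi_{x,r}$ and set $P_{x,r}(z)=y+T_{x,r}(z-x)$. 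Unscaling gives $\sup_{D(x,r)}|f-P_{x,r}|\le \psi(\varepsilon)r$; for the pointwise gradient bound $\|\nabla(f-P_{x,r})\|_{L^{\infty}(D(x,r))}\le\psi(\varepsilon)$, the conformal identity $\nabla f(z)=e^{w(z)}R(z)$, where $R(z)$ is an isometric embedding of $\mathbb{R}^{2}$ onto $T_{f(z)}\Sigma$, combined with the $L^{\infty}$ control of $T_{f(z)}\Sigma$ from the graph representation, transfers tangent-plane proximity into pointwise gradient proximity. The Hessian bound $\|\nabla^{2}f\|_{L^{2}(D(x,r))}\le\psi(\varepsilon)$ follows by applying interior elliptic regularity to $\tilde f-\tilde P$ on the rescaled disc (where the position and gradient of $\tilde f-\tilde P$ are both $\psi(\varepsilon)$-small in $L^{\infty}$) and using that $\|\nabla^{2}(\cdot)\|_{L^{2}}$ is scale-invariant in two dimensions.

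The principal obstacle is the $L^{\infty}$ gradient closeness, since $W^{2,2}\not\hookrightarrow W^{1,\infty}$ in dimension two, so it cannot be extracted from the $L^{2}$ Hessian bound alone. It is overcome by exploiting conformality together with the graph representation in \cite[Thm.~1.1]{BZ-2022b}: $\nabla f/e^{w}$ is pointwise determined by the direction of the tangent plane alone, and this direction is controlled uniformly by the small Lipschitz constant of the graph. A secondary technical point is the identification of the chart $\tilde f$ of $V_{y,r}$ with the restriction $r^{-1}(f(x+r\,\cdot\,)-f(x))$: both are normalized conformal parametrizations of the same surface patch, hence differ by a M\"obius automorphism of $D_{1}$, which must be $\psi(\varepsilon)$-close to the identity because of the shared bi-Lipschitz normalization. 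This ensures that the rigid approximation established for the rescaled chart transfers to $f$ on $D(x,r)$ without loss of quality.
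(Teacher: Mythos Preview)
The paper does not prove this theorem from scratch: the remark immediately following the statement says it is a direct combination of \cite[Thm.~1.1]{BZ-2022b} and \cite[Thm.~5.3]{BZ-2022b}, the first giving the conformal parametrization and the bounds on $w$, the second giving the rigid approximation \eqref{eq:rigid-W22} at every centre and scale (with the observation that the scaling coefficient $a_{x,r}$ in \cite[Thm.~5.3]{BZ-2022b} can be taken equal to $1$ because $\|w\|_{L^\infty}$ is small). Your proposal instead tries to obtain \eqref{eq:rigid-W22} from \cite[Thm.~1.1]{BZ-2022b} alone, by rescaling and re-applying it at every scale. That is a genuinely different route, and the question is whether it can close.

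There is a real gap at the $L^\infty$ gradient step. Your argument is: $\nabla f(z)=e^{w(z)}R(z)$ with $R(z)$ an isometric embedding of $\mathbb{R}^2$ onto $T_{f(z)}\Sigma$; the graph representation gives $L^\infty$ proximity of $T_{f(z)}\Sigma$ to the fixed plane $\Pi_{x,r}$; hence $\nabla f$ is close to a fixed isometric embedding $T_{x,r}$. But tangent-plane proximity is a statement about unoriented $2$-planes, whereas $R(z)$ is a specific frame; knowing that the plane spanned by $R(z)$ is $\psi(\varepsilon)$-close to $\Pi_{x,r}$ only tells you that $R(z)$ is $\psi(\varepsilon)$-close to \emph{some} rotation $S(z)\,T_{x,r}$ with $S(z)\in SO(2)$, and nothing you have written rules out $S(z)$ varying with $z$. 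In the flat model this is exactly the problem of controlling $\arg f'$ from information on $|f'|=e^{w}$: the bounds $\|w\|_{C^0}+\|\nabla w\|_{L^2}+\|\nabla^2 w\|_{L^1}\le\psi$ do \emph{not} by themselves force the harmonic conjugate of $w$ to have small oscillation on all of $D(x,r)$ (this is the borderline failure of $W^{1,2}\hookrightarrow L^\infty$ in two dimensions that you yourself flag). The identification of $\tilde f$ with the rescaled restriction of $f$ via a M\"obius automorphism does not help here, since that automorphism may itself contain an uncontrolled rotation at each scale; the bi-Lipschitz normalization constrains the modulus of its derivative, not its argument. This rotation control is precisely what \cite[Thm.~5.3]{BZ-2022b} supplies and what your rescaling of \cite[Thm.~1.1]{BZ-2022b} does not.
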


\begin{remark}
The theorem is a direct combination of 
\cite[Thm.~1.1]{BZ-2022b} and \cite[Thm.~5.3]{BZ-2022b}.  
The local $W^{2,2}$-regularity, the existence of a conformal parametrization,
and the control of the conformal factor $w$ come from 
\cite[Thm.~1.1]{BZ-2022b}.  
The quantitative affine approximation at every centre and scale is provided by
\cite[Thm.~5.3]{BZ-2022b}.  
Because \cite[Thm.~1.1]{BZ-2022b} gives $w$ with 
$\|w\|_{L^\infty}+\|\nabla w\|_{L^2}$ arbitrarily small when $\varepsilon$ is
small, the scaling coefficient $a_{x,r}$ in \cite[Thm.~5.3]{BZ-2022b} can be
chosen to be $1$.

A detailed argument in the case where the domain is $\mathbb{R}^2$ appears
in \cite[Thm.~3.1]{BZ25}; the modification to discs is entirely routine.
\end{remark}

We now globalize the local quantitative
Theorem~\ref{thm:local-rigidity}.  
By patching the local conformal charts and using the mollification stability
from Lemma~\ref{lem:local-mollification}, one obtains a global conformal
parametrization of the underlying varifold support, and moreover a
smoothing procedure that preserves both the $W^{2,2}$-control
and the near-rigidity at all scales.  
This leads to the following global approximation result.

\begin{theorem}[Approximation by smooth embeddings]\label{thm:smooth-approx}
Let $V=\underline{v}(\Sigma,1)$ be a compactly supported integral $2$-varifold
in $\mathbb{R}^n$ with unit density and generalized mean curvature
$H\in L^2(d\mu)$.  
Then there exist a compact smooth surface $M$ and a conformal map 
\[
F\in W^{2,2}(M,\mathbb{R}^n)\cap C^{0,1}(M),
\qquad 
F_\#(\mathcal{H}^2\llcorner M)=\mu,\quad \underline{v}(F(M),1)=V.
\]

Moreover, for every $\eta>0$ small enough, there exists a smooth embedding
\[
F_\eta\in C^\infty(M,\mathbb{R}^n)
\]
such that
\[
\|F_\eta-F\|_{W^{2,2}(M)}\le \eta,
\qquad
\sup_M |F_\eta-F|\le \eta,
\]
and for all $x,y\in M$,
\begin{equation}\label{eq:bilip-Fk-vs-F}
(1-\eta)\,|F(x)-F(y)|
 \ \le\ |F_\eta(x)-F_\eta(y)|
 \ \le\ (1+\eta)\,|F(x)-F(y)|.
\end{equation}
\end{theorem}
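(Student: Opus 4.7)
The plan is to extract a global conformal parametrization of $\Sigma=\spt V$ by patching the local conformal charts supplied by Theorem~\ref{thm:local-rigidity}. Because $V$ is compactly supported with unit density and $H\in L^{2}(d\mu)$, the monotonicity formula implies that for every $x\in\Sigma$ and every prescribed $\varepsilon>0$, there is a small radius $r_{x}>0$ with
\[
\frac{\mu(B_{r_{x}}(x))}{\pi r_{x}^{2}}\le 1+\varepsilon,
\qquad
\int_{B_{r_{x}}(x)}|H|^{2}\,d\mu\le\varepsilon.
\]
With $\varepsilon$ chosen once for all sufficiently small, Theorem~\ref{thm:local-rigidity} yields a bi-Lipschitz conformal chart $f_{x}\colon D_{r_{x}}\to\Sigma\cap B_{r_{x}}(x)$. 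A finite subcover gives charts whose pairwise transition maps are conformal homeomorphisms of planar domains, hence smooth (holomorphic or antiholomorphic); the resulting smooth conformal atlas defines the compact surface $M$, and the identification $F\colon M\to\mathbb{R}^{n}$ is conformal and belongs to $W^{2,2}(M)$. Global bi-Lipschitzness of $F$ then follows from local bi-Lipschitzness together with compactness of $M$ and injectivity of $F$ (a consequence of unit density): the continuous function $(x,y)\mapsto|F(x)-F(y)|$ attains a positive minimum on the compact set $\{d_{M}(x,y)\ge r_{0}\}$ for every fixed $r_{0}>0$.

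\textbf{Smoothing.} The plan for $F_{\eta}$ is a partition-of-unity convolution whose quantitative heart is Lemma~\ref{lem:local-mollification}. Fix a finite atlas $\{\phi_{\alpha}\colon D_{2\varepsilon}\to U_{\alpha}\subset M\}_{\alpha=1}^{N}$ of conformal charts with controlled overlap multiplicity, and a smooth partition of unity $\{\chi_{\alpha}\}$ subordinate to the shrunk covering $\{\phi_{\alpha}(D_{\varepsilon})\}$. In each chart, mollify the local representation $F^{\alpha}:=F\circ\phi_{\alpha}$ at scale $\varepsilon$ to get a smooth map $\tilde F^{\alpha}=F^{\alpha}\ast\rho_{\varepsilon}$, and set
\[
F_{\eta}:=\sum_{\alpha}\chi_{\alpha}\cdot(\tilde F^{\alpha}\circ\phi_{\alpha}^{-1}).
\]
Standard $W^{2,2}$ convolution estimates together with the uniform $W^{2,2}$-bounds on $F$ provided by Theorem~\ref{thm:local-rigidity} yield $\|F_{\eta}-F\|_{W^{2,2}(M)}+\|F_{\eta}-F\|_{L^{\infty}(M)}\le\eta$ once $\varepsilon$ is chosen sufficiently small.

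\textbf{Bi-Lipschitz comparison.} The estimate \eqref{eq:bilip-Fk-vs-F} is proved by a near/far dichotomy. For a fixed cutoff distance $r_{0}$ (to be chosen below) and for $x,y$ with $d_{M}(x,y)\le r_{0}$ lying in a common chart $\phi_{\alpha}$, Theorem~\ref{thm:local-rigidity} gives a rigid affine map $P_{\alpha}$ with $F^{\alpha}$ close to $P_{\alpha}$ in $W^{1,\infty}$ to order $\psi(\varepsilon)$, and Lemma~\ref{lem:local-mollification} transfers the same bound to $\tilde F^{\alpha}$. Combining these two $W^{1,\infty}$ comparisons gives
\[
|F_{\eta}(x)-F_{\eta}(y)|=|F(x)-F(y)|\,\bigl(1+O(\psi(\varepsilon))\bigr).
\]
For $d_{M}(x,y)\ge r_{0}$, global bi-Lipschitzness of $F$ forces $|F(x)-F(y)|\ge c_{0}(r_{0})>0$, so $\|F_{\eta}-F\|_{L^{\infty}}\le\eta$ converts into the required multiplicative bound once $\eta\ll c_{0}(r_{0})$. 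Injectivity of $F_{\eta}$, and hence embeddedness, is immediate from the left-hand inequality of \eqref{eq:bilip-Fk-vs-F}.

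\textbf{Main obstacle.} The short-scale half of the dichotomy is the delicate point: pure $W^{2,2}$ or $L^{\infty}$ proximity of $F_{\eta}$ to $F$ is by itself insufficient to compare chord lengths, since $|F(x)-F(y)|$ itself tends to zero as $x\to y$. The resolution relies on the scale-invariant rigid approximation~\eqref{eq:rigid-W22}, which forces $F$ to look like a linear isometry at every scale in every chart, together with the $W^{1,\infty}$-stability of Lemma~\ref{lem:local-mollification}, which transfers this quantitative rigidity to the convolution $\tilde F^{\alpha}$. Without these two ingredients one could at best produce $W^{2,2}$-approximations that fail to be embeddings for arbitrarily small $\eta$.
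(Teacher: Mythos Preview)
Your overall plan and the ``Main obstacle'' paragraph correctly identify the decisive ingredients, and the construction of $M$ and $F$ is essentially the paper's. But the bi-Lipschitz argument has a genuine gap in the near case. You only show that each local mollification $\tilde F^{\alpha}$ is $W^{1,\infty}$-close to a rigid map $P_\alpha$; the glued map $F_\eta=\sum_\beta\chi_\beta\,\tilde F^{\beta}$ is a different object, and your sentence ``combining these two $W^{1,\infty}$ comparisons gives $|F_\eta(x)-F_\eta(y)|=|F(x)-F(y)|(1+O(\psi(\varepsilon)))$'' does not follow. Differentiating,
\[
\nabla F_\eta=\sum_\beta\nabla\chi_\beta\,\tilde F^{\beta}+\sum_\beta\chi_\beta\,\nabla\tilde F^{\beta},
\]
and since $|\nabla\chi_\beta|$ scales like the inverse chart radius, the first sum is small only if $\sup|\tilde F^{\beta}-\tilde F^{\alpha}|$ is small \emph{relative to the chart radius}. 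This is precisely where one needs the \emph{scaled} $C^0$ output of Lemma~\ref{lem:local-mollification}, namely $\sup|\tilde F^{\alpha}-F|\le C\psi(\delta)\,\varepsilon$ rather than merely $\to0$, which you never extract. The paper does this explicitly: it first proves \eqref{eq:uniform-C0}, then shows that on each ball of radius $2\varepsilon$ the rigid models $Q_k$ of the overlapping charts are mutually $C\psi(\delta)\,\varepsilon$-close, and only then obtains a single rigid approximant $P_{p,\varepsilon}$ for the glued $F_\varepsilon$.

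The same omission undermines your far case. Since you take chart radius and mollification scale both equal to $\varepsilon$, the requirement that $x,y$ with $d_M(x,y)\le r_0$ lie in a common chart forces $r_0\lesssim\varepsilon$, hence $c_0(r_0)\sim\varepsilon$; but generic mollification at scale $\varepsilon$ only gives $\|F_\eta-F\|_{L^\infty}\lesssim\varepsilon\cdot\mathrm{Lip}(F)$, so the ratio $\|F_\eta-F\|_{L^\infty}/c_0(r_0)$ does not tend to zero and your condition ``$\eta\ll c_0(r_0)$'' is never met. The paper sidesteps this by placing the near/far threshold at $|F(x)-F(y)|=\varepsilon$ (the mollification scale) and then feeding in the scaled estimate $\|F_\varepsilon-F\|_{L^\infty}\le\psi(\delta)\,\varepsilon$, so that both regimes uniformly produce multiplicative error $\psi(\delta)$; one then fixes $\delta$ small to make $\psi(\delta)<\eta$, and only afterwards shrinks $\varepsilon$ for the $W^{2,2}$ and $L^\infty$ closeness.
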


\begin{remark}
(Properness).
The assumption that $\spt V$ is compact may be replaced by the requirement that 
$\Sigma=\spt\mu$ is \emph{proper} in $\mathbb{R}^n$.  
The proof then proceeds on $\Sigma\cap B_R$ and passes to the limit $R\to\infty$ by exhaustion.
\end{remark}

\begin{remark}
(Density).
The hypothesis $\Theta^2(\mu,x)=1$ can be replaced by 
$\Theta^2(\mu,x)<\tfrac32$ for all $x$.  
By the Allard-Almgren\cite{AA-1976} classification of one-dimensional stationary varifolds, this already forces the density to be identically~$1$. See \cite[Theorem 5.2]{BZ25} and \cite[Theorem 1.1]{RS25}. 
\end{remark}

\begin{remark}
($W^{2,2}$-conformal parametrizations).
The smooth approximations in the theorem may be refined to smooth 
approximations in the \emph{$W^{2,2}$-conformal sense}: the maps $F_\eta$
converge to $F$ in $W^{2,2}$, their induced conformal factors converge in 
$L^\infty$, and their conformal structures converge as well.  

Since $F_\eta^{*}g_{\mathbb{R}^n}\to F^{*}g_{\mathbb{R}^n}$ in $L^\infty$, their
Teichm\"uller distance (see Tromba~\cite{T92} or Fletcher-Markovic~\cite{FM07})
tends to $0$.  
Let $g$ be the constant-curvature metric conformal to $F^{*}g_{\mathbb{R}^n}$.
For small $\eta$, there exist constant-curvature metrics $g_\eta$ on $M$,
smoothly converging to $g$ in moduli space, and diffeomorphisms
$\varphi_\eta:M\to M$ satisfying
\[
\varphi_\eta^{*}(F_\eta^{*}g_{\mathbb{R}^n})=g_\eta,
\qquad
\varphi_\eta\to\mathrm{id}\ \text{uniformly},
\]
the uniform convergence following from the fact that
$\varphi_\eta$ is $(1+\psi(\eta))$-quasiconformal as a map
$(M,g)\to(M,g)$.

Set $G_\eta:=F_\eta\circ\varphi_\eta$.  
Then $G_\eta^{*}g_{\mathbb{R}^n}$ is conformal to $g_\eta$ and
$G_\eta\to F$ in $C^0$.
To upgrade the convergence of conformal factors, fix any small radius $r>0$.
Because the construction of $F_\eta$ gives

Because the construction of $F_\eta$ gives
\[
\limsup_{\eta\to0}\int_{B_r(x)} |H_{G_\eta}|^2 \le \delta(r),
\qquad \delta(r)\to0 \text{ as } r\to0,
\]
the same local argument
as in Lemma~\ref{local estimate} applies and yields
\[
\|u_{G_\eta}-u_F\|_{L^\infty(B_{r/2}(x))}\leq \psi(\delta(r)),
\]
where $\psi(\delta)\to0$ as $\delta\to0$.
Since $r$ is arbitrary, the conformal factors converge in $L^\infty$.
Finally, using the mean curvature equation upgrades
$G_\eta\to F$ to strong convergence in $W^{2,2}$.
\end{remark}

\begin{proof}
Fix $\delta>0$ small.  By the monotonicity formula and $H\in L^2(d\mu)$, for
each $x\in\Sigma$ there exists $r_0=r_0(\delta)>0$ such that
\[
\frac{\mu(B_{2r_0}(x))}{\pi(2r_0)^2}\le 1+\varepsilon(\delta),\qquad
\int_{B_{2r_0}(x)}|H|^2\le\varepsilon(\delta),
\]
with $\varepsilon(\delta)\to0$ as $\delta\to0$, and $r_0(\delta)$ can be
chosen uniformly by compact support.  Choose points
$x_1,\dots,x_N\in\Sigma$ such that
$\Sigma\subset\bigcup_{j=1}^N B_{r_0}(x_j)$.  Applying
Theorem \ref{thm:local-rigidity} on each $B_{2r_0}(x_j)$ (after a rigid motion flattening
the tangent plane) yields conformal bi-Lipschitz maps
\[
f_j:D_{2r_0}\to\Sigma\cap B_{2r_0}(x_j)
\]
such that
\[
\|f_j-i_j\|_{W^{2,2}(D_{2r_0})}\le \psi(\delta),
\]
where $i_j:D_{2r_0}\hookrightarrow\mathbb{R}^n$ denotes the standard isometric embedding obtained after the rigid motion centered at $x_j$, and
\[
(1-\psi(\delta))|x-y|\le |f_j(x)-f_j(y)|
\le (1+\psi(\delta))|x-y|
\quad\text{for all }x,y\in D_{2r_0}.
\]

Let $U_j:=f_j(D_{r_0})$; these sets cover $\Sigma$.  
Since each $f_j$ is conformal and $(1+\psi(\delta))$–bi-Lipschitz on
$D_{2r_0}$, whenever $U_i\cap U_k\neq\emptyset$ the set
$f_i^{-1}(U_i\cap U_k)$ lies in a fixed compact disc $K\Subset D_{2r_0}$.

On $U_i\cap U_k$, the transition map
\[
\varphi_{ki}=f_k^{-1}\circ f_i
\]
is conformal and satisfies
\[
|D\varphi_{ki}|,\,|D\varphi_{ki}^{-1}|
=1+\psi(\delta).
\]
Since conformal  maps with $|DF|\equiv1$ are rigid motions, the compactness of conformal maps implies the existence of an orthogonal affine map $R_{ki}$ with
\[
\|\varphi_{ki}-R_{ki}\|_{C^{1}(K)}\le C\,\psi(\delta).
\]

Thus the charts $(U_j,\phi_j:=f_j^{-1})$ form a smooth atlas on $\Sigma$ whose transition maps are conformal, which we denote by 
$\mathcal A$, giving $\Sigma$ a smooth conformal structure $M=(\Sigma,\mathcal{A})$.  All notions of conformal, $C^{k,\alpha}$,
and $W^{k,p}$ regularity on $M$ are understood via this atlas.

With respect to this structure, the parametrizations glue to a single global
map
\[
F:M\to\mathbb{R}^n,\qquad F=f_j\circ\phi_j\ \text{on }U_j,
\]
which is a conformal map, belongs to $W^{2,2}(M)$, and satisfies
$F_\#(\mathcal{H}^2\llcorner M)=\mu$.

To smooth $F$,  set $u_j:=F\circ\phi_j^{-1}$ so $u_j$ is simply the local representation of $F$ in the $j$-th coordinate
chart (in fact $u_j=f_j$, but the notation emphasizes that $u_j$ is viewed as
a function on the coordiante chart $\phi_j(U_j)\subset\mathbb{R}^2$).
For $0<\varepsilon<r_0/4$ define
\[
u_{j,\varepsilon}:= u_j*\rho_\varepsilon,\qquad
F_{j,\varepsilon}(p):=u_{j,\varepsilon}(\phi_j(p)),\quad p\in U_j.
\]
Then $F_{j,\varepsilon}\in C^\infty(U_j)$ and
$F_{j,\varepsilon}\to F$ in $W^{2,2}(U_j)$ and uniformly as
$\varepsilon\to0$.

By Theorem~\ref{thm:local-rigidity}, for each $y_0\in D_{r_0}$ and every
$\varepsilon>0$ small enough so that $B_{4\varepsilon}(y_0)\subset D_{2r_0}$,
there exists a rigid isometry $P_{y_0,4\varepsilon}^{(j)}$ with
\[
\varepsilon^{-1}\|u_j-P_{y_0,4\varepsilon}^{(j)}\|_{L^2(B_{4\varepsilon}(y_0))}
+\|\nabla^2(u_j-P_{y_0,4\varepsilon}^{(j)})\|_{L^2(B_{4\varepsilon}(y_0))}
\le C\,\psi(\delta).
\]

Lemma~\ref{lem:local-mollification} then yields
\[
\|u_j-P_{y_0,4\varepsilon}^{(j)}\|_{L^{\infty}(B_{2\varepsilon}(y_0))}
+\|u_{j,\varepsilon}-P_{y_0,4\varepsilon}^{(j)}\|_{L^{\infty}(B_{2\varepsilon}(y_0))}
+\varepsilon\|\nabla(u_{j,\varepsilon}-P_{y_0,4\varepsilon}^{(j)})\|_{L^{\infty}(B_{2\varepsilon}(y_0))}
\le C\,\psi(\delta)\,\varepsilon.
\]

Hence,
\[
|u_{j,\varepsilon}(x)-u_j(x)|\le C\,\psi(\delta)\,\varepsilon
\qquad\text{for all }x\in B_{2\varepsilon}(y_0).
\]
Since $y_0$ is arbitrary, we obtain
\[
\sup_{x\in D_{r_0}}|u_{j,\varepsilon}(x)-u_j(x)|
\le C\,\psi(\delta)\,\varepsilon.
\]
Consequently,
\begin{equation}\label{eq:uniform-C0}
\sup_{x\in U_j}|F_{j,\varepsilon}(x)-F(x)|
\le \psi(\delta)\,\varepsilon.
\end{equation}
after absorbing constants into $\psi(\delta)$.

Let $\{\eta_k\}$ be a smooth partition of unity on $M$ subordinate to
$\{U_k\}$ and set
\[
F_\varepsilon:=\sum_k \eta_k\,F_{k,\varepsilon}.
\]
Then $F_\varepsilon\in C^\infty(M)$ and $F_\varepsilon\to F$ uniformly and in
$W^{2,2}(M)$ as $\varepsilon\to0$.

Fix $p\in M$ and choose a chart $(U_j,\phi_j)$ with $p\in U_j$, writing
$x_0:=\phi_j(p)$.  
For each $k$ with $U_k\cap U_j\neq\emptyset$, the transition map
$\varphi_{kj}:=\phi_k\circ\phi_j^{-1}$ satisfies
\[
\|\varphi_{kj}-R_{kj,x_0,\varepsilon}\|_{L^\infty(B_{4\varepsilon}(x_0))}
 \;+\;
 \varepsilon\|\nabla(\varphi_{kj}-R_{kj,x_0,\varepsilon})\|_{L^\infty(B_{4\varepsilon}(x_0))}
 \;\le\; C\,\psi(\delta)\varepsilon,
\]
for an orthogonal affine map $R_{kj,x_0,\varepsilon}$ depending on $p$ and
the scale~$\varepsilon$. Such rigid maps exist for the same reason as in the earlier discussion.

Define in $j$-coordinates
\[
v_{k,\varepsilon}:=F_{k,\varepsilon}\circ\phi_j^{-1},\qquad
a_k:=\eta_k\circ\phi_j^{-1},\qquad
U_\varepsilon:=F_\varepsilon\circ\phi_j^{-1}=\sum_k a_k v_{k,\varepsilon}.
\]

Applying Lemma~\ref{lem:local-mollification} in the $k$-chart at
$y_0:=\phi_k(p)$ gives rigid isometries $P^{(k)}_{y_0,4\varepsilon}$ with
\[
\|u_{k,\varepsilon}-P^{(k)}_{y_0,4\varepsilon}\|_{L^\infty(B_{2\varepsilon}(y_0))}
+\varepsilon\|\nabla(u_{k,\varepsilon}-P^{(k)}_{y_0,4\varepsilon})\|_{L^\infty(B_{2\varepsilon}(y_0))}
\le C\,\psi(\delta)\,\varepsilon.
\]
Transport them to the $j$-chart by
\[
Q_k(x)
:=P^{(k)}_{y_0,4\varepsilon}(R_{kj,x_0,\varepsilon}(x)),
\qquad x\in\mathbb{R}^2.
\]
Then
\[
\|v_{k,\varepsilon}-Q_k\|_{L^\infty(B_{2\varepsilon}(x_0))}
+\varepsilon\|\nabla(v_{k,\varepsilon}-Q_k)\|_{L^\infty(B_{2\varepsilon}(x_0))}
\le C\,\psi(\delta)\,\varepsilon.
\]

On overlaps we have $u_j=u_k\circ\varphi_{kj}$, hence the above estimates  imply
\[
\|Q_k-Q_j\|_{L^{\infty}(B_{2\varepsilon}(x_0))}
\le C\,\psi(\delta)\,\varepsilon.
\]
Setting $P_{p,\varepsilon}:=Q_j$ yields, for all such $k$,
\[
\|v_{k,\varepsilon}-P_{p,\varepsilon}\|_{L^\infty(B_{2\varepsilon}(x_0))}
+\varepsilon\|\nabla(v_{k,\varepsilon}-P_{p,\varepsilon})\|_{L^\infty(B_{2\varepsilon}(x_0))}
\le C\,\psi(\delta)\,\varepsilon.
\]

On $B_{2\varepsilon}(x_0)\subset\phi_j(U_j)$ the overlap of the cover is
uniformly bounded, and there exists $C_1(\delta)$ such that
\[
\sum_k a_k(x)=1,\qquad
\sum_k |\nabla a_k(x)|\le \frac{C_1(\delta)}{r_0(\delta)}.
\]
Note that we have
\[
|v_{k,\varepsilon}(x)-P_{p,\varepsilon}(x)|\le C\psi(\delta)\,\varepsilon
\quad\text{for }x\in B_{2\varepsilon}(x_0),
\]
and therefore
\[
\|U_\varepsilon-P_{p,\varepsilon}\|_{L^\infty(B_{2\varepsilon}(x_0))}
 \le C\psi(\delta)\,\varepsilon.
\]
Moreover,
\[
\nabla U_\varepsilon-\nabla P_{p,\varepsilon}
 =\sum_k \nabla a_k\,(v_{k,\varepsilon}-P_{p,\varepsilon})
  +\sum_k a_k\,(\nabla v_{k,\varepsilon}-\nabla P_{p,\varepsilon}),
\]
so using the bounds on $\nabla a_k$ and the $W^{1,\infty}$-estimates for
$v_{k,\varepsilon}-P_{p,\varepsilon}$,
\[
\|\nabla U_\varepsilon-\nabla P_{p,\varepsilon}\|_{L^\infty(B_{2\varepsilon}(x_0))}
 \le C(\delta)\,\varepsilon + C\,\psi(\delta).
\]
Combining these, we have
\[
 \varepsilon^{-1}\|U_\varepsilon-P_{p,\varepsilon}\|_{L^{\infty}(B_{2\varepsilon}(x_0))}+\|\nabla(U_\varepsilon-P_{p,\varepsilon})\|_{L^{\infty}(B_{2\varepsilon}(x_0))}
 \le C(\delta)\,\varepsilon + \psi(\delta),
\]
and hence, in terms of $F_\varepsilon$ and $F$ on $U_j$,
\[
\varepsilon^{-1}|F_\varepsilon-P_{p,\varepsilon}\|_{L^{\infty}(\phi_j^{-1}(B_{2\varepsilon}(x_0)))}+
\|\nabla(F_\varepsilon-P_{p,\varepsilon})\|_{L^{\infty}(\phi_j^{-1}(B_{2\varepsilon}(x_0)))}
 \le C(\delta)\,\varepsilon + \psi(\delta).
\]

We now prove \eqref{eq:bilip-Fk-vs-F}.  
Since $F$ is $(1+\psi(\delta))$-bi-Lipschitz on each chart, there exists 
$C_0>1$ with $C_0\to1$ as $\delta\to0$ such that
\[
C_0^{-1}|\phi_j(x)-\phi_j(y)|\le |F(x)-F(y)|
\le C_0|\phi_j(x)-\phi_j(y)|,
\qquad x,y\in U_j,
\]
and for small $\delta$ we may assume $C_0\le2$.

If $|F(x)-F(y)|\le \varepsilon$, then $x,y$ lie in the same chart and
\(|\phi_j(x)-\phi_j(y)|\le2\varepsilon\), hence both belong to
$\phi_j^{-1}(B_{2\varepsilon}(x_0))$ for some $x_0$.  
Note on this ball,  we have
\[
\varepsilon^{-1}|F_\varepsilon-P_{p,\varepsilon}\|_{L^{\infty}(\phi_j^{-1}(B_{2\varepsilon}(x_0)))}+
\|\nabla(F_\varepsilon-P_{p,\varepsilon})\|_{L^{\infty}(\phi_j^{-1}(B_{2\varepsilon}(x_0)))}
 \le  \psi(\delta).
\]
As $P_{p,\varepsilon}$ is an isometry, $F_\varepsilon$ is
$(1+C\psi(\delta))$-bi-Lipschitz there, giving
\[
(1-\psi(\delta))|F(x)-F(y)|
\le |F_\varepsilon(x)-F_\varepsilon(y)|
\le (1+\psi(\delta))|F(x)-F(y)|.
\]

If $|F(x)-F(y)|\ge \varepsilon$, then by \eqref{eq:uniform-C0},
\[
|F_\varepsilon(x)-F_\varepsilon(y)|
\ge |F(x)-F(y)| - 2\psi(\delta)\varepsilon
\ge (1-\psi(\delta))|F(x)-F(y)|,
\]
since $\varepsilon\le\psi(\delta)$; the reverse inequality follows similarly.

Thus for all $x,y\in M$,
\[
(1-\psi(\delta))\,|F(x)-F(y)|
\le |F_\varepsilon(x)-F_\varepsilon(y)|
\le (1+\psi(\delta))\,|F(x)-F(y)|,
\]
which is \eqref{eq:bilip-Fk-vs-F}.

Finally, given $\eta>0$, we first choose $\delta>0$ sufficiently small so that 
$\psi(\delta)<\eta$, and then choose the mollification scale 
$\varepsilon>0$ (depending on~$\delta$) so small that
\[
\|F_\varepsilon-F\|_{W^{2,2}(M)}\le \eta,
\qquad 
\sup_M |F_\varepsilon-F|\le \eta.
\]
As shown above, this implies that for all $x,y\in M$,
\[
(1-\eta)\,|F(x)-F(y)|
 \le |F_\varepsilon(x)-F_\varepsilon(y)|
 \le (1+\eta)\,|F(x)-F(y)|.
\]
Set $F_\eta:=F_\varepsilon$.  Then $F_\eta$ is a smooth embedding,
$F_\eta\to F$ in $W^{2,2}(M)$ and uniformly, and the bi-Lipschitz estimate
holds with error~$\eta$.  

\end{proof}

\begin{bibdiv}
\begin{biblist}

\bib{AA-1976}{article}{
  title={The structure of stationary one dimensional varifolds with positive density},
  author={Allard, William K.}
  author={Almgren, Frederick Justin, Jr.},
  journal={Invent. Math.},
  volume={34},
  number={2},
  pages={83--97},
  year={1976},
  publisher={}
}

 \bib{BK03}{article}{
 author={Bauer, Matthias},
 author={ Kuwert, Ernst},
   title={Existence of minimizing Willmore surfaces of prescribed genus},
   journal={Int. Math. Res. Not.},
   volume={}
   date={2003},
   number={10},
   pages={553-576},
    issn={},
   review={},
   doi={},
   } 
\bib{BZ-2022b}{article}{
title={Bi-Lipschitz regularity of 2-varifolds with the critical Allard condition},
author={Bi, Yuchen},
author={Zhou, Jie},
journal={arXiv:2212.03043}
}
 \bib{BZ25}{article}{
author={Bi, Yuchen}
author={Zhou, Jie}
   title={Optimal rigidity estimates for varifolds almost minimizing the Willmore energy},
   journal={Transactions of the American Mathematical Society},
   volume={}
   date={2025},
   number={2},
   pages={943-965},
    issn={},
   review={},
   doi={},
   }

\bib{CLMS-1993}{article}{
  title={Compensated compactness and Hardy spaces},
  author={Coifman, Ronald},
  author={Lions, Pierre-Louis},
  author={Meyer, Yves},
  author={Semmes, Stephen},
  journal={Journal de Math{\'e}matiques Pures et Appliqu{\'e}es},
  year={1993},
  volume={72},
  pages={247-286}
}

\bib{dLMu}{article}{
 author={De Lellis, Camillo},
   author={Müller, Stefan},
   title={Optimal rigidity estimates for nearly umbilical surfaces},
   journal={ J. Differential Geom.},
   volume={ 69},
   date={2005},
   number={1},
   pages={75-110},
    issn={},
   review={MR2169583 (2006e:53078)},
   doi={},
   }
\bib{dLMu2}{article}{
 author={De Lellis, Camillo},
   author={Müller, Stefan},
   title={ A $C^0$ estimate for nearly umbilical surfaces},
   journal={ Calc. Var. Partial Differential Equations},
   volume={ 26},
   date={2006},
   number={3},
   pages={283-296},
    issn={},
   review={MR2232206 (2007d:53003)},
   doi={},
   }

\bib{FM07}{book}{
  author={Fletcher, Alice},
  author={Markovic, Vladimir},
  title={Quasiconformal Maps and Teichm\"uller Theory},
  series={Oxford Graduate Texts in Mathematics},
  volume={11},
  publisher={Oxford University Press},
  address={Oxford},
  year={2007},
  pages={viii+189},
  isbn={978-0-19-856926-8},
}

\bib{HK78}{article}{
  title={A general comparison theorem with applications to volume estimates for submanifolds},
  author={Heintze, Ernst},
  author={Karcher, Hermann},
  journal={Ann. Sci. {\'E}c. Norm. Sup{\'e}r},
  volume={11},
  number={4}
  pages={451--470},
  year={1978},
  publisher={}
}

\bib{KL-2012}{article}{
title={$W^{2,2}$-conformal immersions of a closed Riemmann surface into $\mathbb{R}^n$},
author={Kuwert, Ernst},
author={Li, Yuxiang},
journal={Communications in Analysis and Geometry},
year={2012}
volume={20}
number={2}
pages={313-340}
}
 \bib{KLS10}{article}{
 author={Kuwert, Ernst},
 author={Li, Yuxiang},
   author={Schätzle, Reiner Michael},
   title={ The large genus limit of the infimum of the Willmore energy},
   journal={Amer. J. Math.},
   volume={132 },
   date={2010},
   number={1},
   pages={37-51},
    issn={},
   review={},
   doi={},
   }

	\bib{LS-2014}{article}{
  title={Optimal rigidity estimates for nearly umbilical surfaces in arbitrary codimension},
  author={Lamm, Tobias},
  author={Sch\"atzle, Michael},
  journal={Geom. Funct. Anal.},
  year={2014},
  volume={24},
  pages={2029-2062}
}

  \bib{MN14}{article}{
  title={Min-Max theory and the Willmore Conjecture},
  author={Marques, Fernando C.},
  author={Neves, Andr\'e},
  journal={Ann. of Math.},
  year={2014},
  number={2}
  volume={179},
  pages={683-782}
}

\bib{MS-1995}{article}{
  title={On surfaces of finite total curvature},
  author={M{\"u}ller, Stefan},
  author={{\v{S}}ver{\'a}k, Vladim{\i}r},
  journal={Journal of Differential Geometry},
  volume={42},
  number={2},
  pages={229--258},
  year={1995},
  publisher={Lehigh University}
}

 \bib{R14}{article}{
  title={Variational principles for immersed surfaces with $L^2$-bounded second fundamental form},
  author={Rivi\'ere, Tristan},
  journal={J. Reine Angew. Math.},
  year={2014},
  number={}
  volume={695},
  pages={41–98}
} 
  
\bib{R99}{article}{
  title={The {W}illmore conjecture in the real projective space},
  author={Ros, Antonio},
  journal={Math. Res. Lett.},
  volume={6},
  number={5-6},
  pages={487-493},
  year={1999},
  publisher={}
}

\bib{RS25}{article}{
  author={Rupp, Fabian},
  author={Scharrer, Christian},
  title={Global regularity of integral 2-varifolds with square integrable mean curvature},
  journal={J. Math. Pures Appl.},
  volume={204},
  date={2025},
  pages={103797},
  issn={0021-7824},
  doi={10.1016/j.matpur.2025.103797},
}

 \bib{LS93}{article}{
  title={Existence of surfaces minimizing the Willmore functional},
  author={Simon, Leon},
  journal={Communications in Analysis and Geometry},
  volume={1},
  number={2},
  pages={281--326},
  year={1993},
  publisher={International Press of Boston}
}

    \bib{Sch13}{article}{
 author={Sch\"atzle, Reiner Michael},
   title={Estimation of the conformal factor under bounded Willmore energy},
   journal={Math. Z.},
   volume={ 274 },
   date={2013},
   number={3-4},
   pages={1341–1383},
    issn={},
   review={MR3078270},
   doi={},
   }
   
\bib{T92}{book}{
  title={Teichmüller theory in Riemannian geometry. Lecture notes prepared by Jochen Denzler. Lectures in Mathematics ETH Zürich.},
  author={Tromba, Anthony J.},
  volume={},
  year={1992},
  page={220 pp.}
  ISBN={ISBN: 3-7643-2735-9}
  publisher={Birkhäuser Verlag, Basel}
}
    \bib{W69}{article}{
 author={Wente, Henry C.},
   title={An existence theorem for surfaces of constant mean curvature},
   journal={J.
 Math. Anal. Appl.},
   volume={26},
   date={1969},
   number={},
   pages={318–344},
    issn={},
   review={},
   doi={},
   }
\end{biblist}
\end{bibdiv}	
\end{document}